\documentclass[10pt,reqno]{amsart}

\usepackage[english]{babel}

\usepackage[a4paper,top=2cm,bottom=2cm,left=2.2cm,right=2.2cm]{geometry}

\usepackage{amsmath}
\usepackage{amssymb}
\usepackage{amsthm}
\usepackage{graphicx}
\usepackage{xcolor}
\usepackage{tikz-cd}
\usepackage[colorlinks=true, allcolors=blue]{hyperref}
\usepackage{mathtools}
\usepackage{tcolorbox}
\usepackage{etoolbox}
\usepackage{enumitem}
\usepackage{moreenum}
\usepackage{orcidlink}

\providecommand\given{}
\newcommand\SetSymbol[1][]{%
	\nonscript\:#1\vert
	\allowbreak
	\nonscript\:
	\mathopen{}}
\DeclarePairedDelimiterX\Set[1]\{\}{%
	\renewcommand\given{\SetSymbol[\delimsize]}
	#1
}

\DeclarePairedDelimiterXPP{\Generate}[1]{}{\langle}{\rangle}{_\mathcal{R}}{%
	\renewcommand\given{\SetSymbol[\delimsize]}
	#1
}

\DeclarePairedDelimiter{\abs}{\lvert}{\rvert}

\DeclarePairedDelimiterXPP{\uNorm}[2]{}{\lVert}{\rVert}{_{#2}}{
\ifblank{#1}{\: .\:}{#1}}

\newcommand{\fa}{\;\forall\,}
\newcommand{\ex}{\;\exists\,}

\newcommand{\R}[1][]{\ifstrempty{#1}{\mathbb{R}}{\mathbb{R}^{#1}}}
\newcommand{\C}[1][]{\ifstrempty{#1}{\mathbb{C}}{\mathbb{C}^{#1}}}
\newcommand{\N}{\mathbb{N}}

\newcommand{\bM}{\mathbf{M}}
\newcommand{\bN}{\mathbf{N}}
\newcommand{\fM}{\mathfrak{M}}
\newcommand{\E}{\mathcal{E}}
\newcommand{\D}{\mathcal{D}}

\newcommand{\crb}{\mathcal{V}}
\newcommand{\atlas}{\mathcal{A}}
\newcommand{\poln}{\mathcal{P}_n}

\newcommand{\analytic}{\mathcal{C}^{\omega}}

\newcommand{\Rou}[2][]{\ifstrempty{#1}{\mathcal{E}^{\{#2\}}}{\mathcal{E}^{\{#2\}}(#1)}}
\newcommand{\Beu}[2][]{\ifstrempty{#1}{\mathcal{E}^{(#2)}}{\mathcal{E}^{(#2)}(#1)}}

\newcommand{\sheaf}[2][]{%
  \ifstrempty{#1}{%
   \ifstrempty{#2}{ \mathcal{R}}{\mathcal{R}_{#2}}
  }{%
   \ifstrempty{#2}{\mathcal{R}(#1)}{\mathcal{R}_{#2}(#1)} 
  }%
}
\newcommand{\module}[1][]{%
\ifstrempty{#1}{ \mathcal{N}}{\mathcal{N}(#1)}}

\newcommand{\Distr}[2][]{\ifstrempty{#1}{%
\ifstrempty{#2}{\mathcal{D}^\prime}{\mathcal{D}^\prime_{#2}}%
}{\mathcal{D}^\prime_{#2}(#1)}}

\newcommand{\pot}[2][]{\ifstrempty{#1}{\ifstrempty{#2}{\mathfrak{p}^{sc}}{\mathfrak{p}^{sc}_{#2}}}%
{\mathfrak{p}^{sc}_{#2}(#1)}}

\newcommand{\VFS}[1][]{\ifstrempty{#1}{\mathfrak{X}}{\mathfrak{X}(#1)}}

\DeclareMathOperator{\WFR}{WF_{\mathcal{R}}}
\DeclareMathOperator{\WF}{WF}
\DeclareMathOperator{\real}{Re}
\DeclareMathOperator{\imag}{Im}
\DeclareMathOperator{\supp}{supp}
\DeclareMathOperator{\singsupp}{sing\, supp}
\DeclareMathOperator{\Char}{Char}
\DeclareMathOperator{\spanc}{span}
\DeclareMathOperator{\Id}{Id}

\DeclareMathOperator{\GL}{GL}

\DeclareMathOperator{\Fl}{Fl}

\DeclareMathOperator{\Imag}{imag}

\theoremstyle{plain}
\newtheorem{Thm}{Theorem}[section]
\newtheorem{Prop}[Thm]{Proposition}
\newtheorem{Lem}[Thm]{Lemma}
\newtheorem{Cor}[Thm]{Corollary}

\theoremstyle{definition}
\newtheorem{Def}[Thm]{Definition}

\theoremstyle{remark}
\newtheorem{Rem}[Thm]{Remark}

\title{Sheaves of ultradifferentiable functions}
\author{Stefan F\"urd\"os\;\orcidlink{0000-0003-2612-5349} }
\address{Faculty of Mathematics, University of Vienna, Oskar-Morgenstern-Platz 1, 1090 Wien, Austria}
\email{stefan.fuerdoes@univie.ac.at}

\subjclass[2020]{Primary 26E10, 35A18, 35A27; %
Secondary 32V20, 35B65, 58J99} 
\keywords{sheaves of ultradifferentiable functions, (non-)quasianalyticity, wave front set, regularity}

\begin{document}

\begin{abstract}
An abstract theory of ultradifferentiable sheaves is developed.
Moreover, various applications to the theory of linear partial differential equations, 
differential geometry and, in particular, CR geometry are discussed.
\end{abstract}
\maketitle
\section{Introduction}
In recent times there has been a resurgent interest 
in the study and applications of ultradifferentiable classes, 
i.e.~subalgebras of the algebra of smooth functions.

Usually ultradifferentiable classes are defined by estimates on
its derivatives (``generalized Cauchy estimates'') or by estimates on the Fourier transform determined by data like weight sequences, see e.g.~\cite{Komatsu} or weight functions, cf.~\cite{BMT90}.
In order to formulate and prove results we need to work with the data and impose conditions on
it. However, in part of the literature it appears that it would be enough in some situations that the ultradifferentiable classes
under consideration satisfy some basic axioms and there is no further need in the proofs to use the defining data of the classes.
In fact, in the papers \cite{BierstoneMilman} and \cite{Schapira68} for example
the authors are only working on the premise that the ultradifferentiable classes
satisfy certain invariance properties.

The present article tries to formulate an abstract theory
of ultradifferentiable classes and give some applications to the theory of partial differential equations or  theory of CR manifolds.
The choice of applications is based on some of the authors previous work,
e.g.~\cite{FuerdoesCR}, \cite{Fuerdoes20} or \cite{fuerdoes2025}.
Since the patterns of most of the proofs are already present in the literature we only indicate the arguments in some particular cases as explanatory motivations of the ideas
and concentrate on the defining axioms of the ultradifferentiable classes.
It is necessary to point out that it is not clear, if all axioms discussed in the 
different sections are really independent of each other, as is pointed out
in some cases. Moreover, since our focus is distinctly on geometric applications,
our approach excludes e.g.~anisotropic classes as for example used in 
\cite{Chinni23} and the classes introduced in \cite{LiessRodino84}.

The paper is organized in the following way: In Section \ref{Section2} we
introduce the fundamental concepts of this paper and discuss quasianalyticity of
the classes and applications to the regularity theory of analytic partial differential operators, based on the  approach in \cite{fuerdoes2025}.
In particular, we consider the case if there is a wavefront set associated
the classes
In Section \ref{Section3} we discuss the conditions necessary to
define ultradifferentiable manifolds and develop a differential geometric 
theory mirroring the smooth category. As an application
we prove a quasianalytic Holmgren theorem following \cite{Fuerdoes20}.
In Section \ref{Section4} we present the applications to CR theory.
In particular, we generalize the results of \cite{LamelMirRegSections}.
Finally, in Section \ref{Section5} we give an example of a family of ultradifferentiable classes satisfying our axioms, namely the classes
first defined in \cite{RainerSchindl14}.

\subsection*{Acknowledgments}
This work was funded in whole or in part by the Austrian Science Fund (FWF) 10.55776/PAT1994924.

The author would also like thank Bernhard Lamel for his interest in this work.
\section{Sheaves of smooth functions in PDE theory}\label{Section2}
\subsection{Isotropic sheaves}
We denote by $\E_n$ be the sheaf of smooth functions on $\R[n]$, $n\in\N$, i.e.
$\E_n(U)$ is the algebra of smooth functions on $U$ for any open set $U\subseteq\R[n]$.
Obviously, the restriction of any function $f\in\E_n(U)$ to any open set $U^\prime\subseteq U$
is an element of $\E_n(U^\prime)$. However, the restriction of $f$ to a set $V\subseteq U$ which
is open in $\R[m]$ ($m\leq n$), is also an element of $\E_m(V)$. We want to formalize this condition:
If $V\subseteq\R[m]$, $U\subseteq\R[n]$ are open sets with $m\leq n$ and $V\subseteq U$
then the inclusion mapping 
\begin{equation*}
\chi_{UV}:\; V\longrightarrow U
\end{equation*}
induces a mapping by pullback
\begin{equation*}
\lambda_{UV}=\chi^\ast_{UV}: \; \E(U)\longrightarrow\E(V)
\end{equation*}
given by $\lambda_{UV}(f)=f\circ \chi_{UV}=f\vert_V$ for $f\in\E(U)$
%\footnote{We identify here without loss of generality $U$}.

We want to consider subsheaves of $\E_n$ that inherite this property:
Let $\sheaf{}$ be a subsheaf of $\E$, i.e.~$\sheaf{}=(\sheaf{n})_{n\in\N}$ and $\sheaf{n}$ is a subsheaf of
$\E_n$ for each $n\in\N$. If $\analytic_n$ is the sheaf of real-analytic functions on $\R[n]$
then $\analytic=(\analytic_n)_{n\in\N}$ is a subsheaf of $\E$.
Finally we denote by $\poln$ the algebra of (complex) polynomials on $\R[n]$.
\begin{Def}
We say that $\sheaf{}$ is an isotropic ultradifferentiable
sheaf  if the following conditions are satisfied.
\begin{enumerate}[label=(U\arabic*),itemsep=0.3em,topsep=0.4em]
\item $\sheaf{}$ is an subsheaf of $\E$, i.e.~$\sheaf{n}$ is an subsheaf of $\E_n$ for any $n\in\N$. Furthermore, for each $n\in\N$ we have that $\poln\subseteq\sheaf[{\R[n]}]{n}$.
%\item If $\mathcal{P}_n$ is the vector space of polynomial functions on $\R^n$ then
%$\mathcal{P}_n\subseteq\E_n(\R[n])$ for any $n\in\N$.
\item\label{LinearInv} $\sheaf{}$ is closed under translations and dilations: 
Let $U\subseteq\R[n]$, $a\in\R[n]$ and $\lambda>0$.
If $\Psi(x)= \lambda^{-1} x-a$ then
\begin{equation*}
   f\in\sheaf[U]{n}\Longrightarrow \Psi^\ast f=f\circ\Psi\in\sheaf[\lambda U+a]{n}.
\end{equation*}
where $\lambda U+a=\Set{\lambda y+a\given y\in U}$.
\item Let %$\sheaf{}$ be an isotropic ultradifferentiable class and 
$U\subseteq\R[n]$, $V\subseteq\R[m]$ 
be two open sets with $V\subseteq U$ (i.e.~$m\leq n$).
Then 
\begin{equation*}
    \lambda_{UV}\left(\sheaf[U]{n}\right)\subseteq\sheaf[V]{m}.
\end{equation*}
%\end{Lem}
%\item Let $U\subseteq\R[n]$, $V\subseteq\R[m]$ be two open sets with $V\subseteq U$ (i.e.~$m\leq n$).
%Then 
%\begin{equation*}
%    \lambda_{UV}\left(\sheaf[U]{n}\right)\subseteq\sheaf[V]{m}.
%\end{equation*}
\item\label{Tensor} If $f\in\sheaf[U]{n}$ and $g\in\sheaf[V]{m}$ then $f\otimes g\in\sheaf[U\times V]{n+m}$.
\item $\sheaf{}$ is closed under involution: If $\varphi\in\sheaf[U]{n}$ then $\bar{\varphi}\in\sheaf[U]{n}$.
\end{enumerate}
\end{Def}

%\begin{Lem}
%Let $\sheaf{}$ be an isotropic ultradifferentiable class and $U\subseteq\R[n]$, $V\subseteq\R[m]$ 
%be two open sets with $V\subseteq U$ (i.e.~$m\leq n$).
%Then 
%\begin{equation*}
%    \lambda_{UV}\left(\sheaf[U]{n}\right)\subseteq\sheaf[V]{m}.
%\end{equation*}
%\end{Lem}

%\begin{proof}
%    Let $\iota: \R[m]\rightarrow \R[n]$ be the linear inclusion mapping such that 
 %   $\iota(V)\subseteq U$. 
 %   Then $\lambda_{UV}(f)= (\iota^\ast f)\vert_V$ for any $f\in\E_n(U)$.
 %   If $f\in \sheaf[U]{n}$ then $(\iota^\ast f\in \sheaf[\iota^{-1}(U)]{m}$ by \ref{LinearInv}.
  %  On the other hand $g\vert_V\in\sheaf[V]{m}$ for every $g\in\sheaf[\iota^{-1}(U)]{m}$. 
%\end{proof}

\begin{Def}[Quasianalyticity]
    Let $U\subseteq\R[n]$ be an open set and $\D(U)$ the algebra of smooth functions with compact support in $U$. Recall that a subalgebra $\mathcal{A}\subseteq\E(U)$ is called quasianalytic
    if $\mathcal{A}\cap\D(U)=\{0\}$ and non-quasianalytic otherwise.
\end{Def}
\begin{Lem}\label{QuasiLemma1}
    Let $\sheaf{}$ be an isotropic ultradifferentiable class. For each $n\in\N$ the following statements are equivalent:
    \begin{enumerate}
        \item There is some open set $U\subseteq \R[n]$ the algebra $\sheaf[U]{n}$ is non-quasianalytic.
        \item $\sheaf[U]{n}$ is non-quasianalytic for each open $U\subseteq\R[n]$.
    \end{enumerate}
\end{Lem}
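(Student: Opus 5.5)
The implication (2)$\Rightarrow$(1) is trivial, so the work is in (1)$\Rightarrow$(2). Assume $\sheaf[U]{n}$ is non-quasianalytic for some open $U\subseteq\R[n]$, i.e.~there is $0\neq\varphi\in\sheaf[U]{n}\cap\D(U)$. Fix an arbitrary open $W\subseteq\R[n]$. I will transport $\varphi$ by (U2) so that its support lies inside $W$, and then use the sheaf property to obtain an element of $\sheaf[W]{n}\cap\D(W)\setminus\{0\}$.

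First I pick a point $x_0\in\supp\varphi$ and a ball $B(y_0,r)\subseteq W$. Since $\supp\varphi$ is compact, it is contained in some ball $B(x_0,R)$. I then choose $\lambda>0$ small with $\lambda R<r$ and $a=y_0-\lambda x_0$. Let $\Psi(x)=\lambda^{-1}(x-a)$. This is an affine map of the form allowed in \ref{LinearInv}; if the axiom is literally stated with $\Psi(x)=\lambda^{-1}x-a$, I reparametrize $a$ accordingly. By \ref{LinearInv}, $\psi:=\varphi\circ\Psi\in\sheaf[\lambda U+a]{n}$. Moreover $\supp\psi=\lambda\supp\varphi+a\subseteq B(y_0,\lambda R)\Subset B(y_0,r)\subseteq W$. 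Hence $\psi$ is compactly supported, nonzero, and has support $K\subseteq W\cap(\lambda U+a)$.

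Next I extend $\psi$ by zero to $W$. For this I use the gluing axiom of the sheaf $\sheaf{n}$, which (U1) guarantees. The open sets $W_1=W\cap(\lambda U+a)$ and $W_2=W\setminus K$ cover $W$. On $W_1$ I take $\psi|_{W_1}\in\sheaf[W_1]{n}$, using the restriction maps of the sheaf. On $W_2$ I take $0$, which lies in $\sheaf[W_2]{n}$ since constants are polynomials (by (U1), restricted). These two sections agree on $W_1\cap W_2$, where $\psi$ vanishes, so they glue to a section $\tilde\psi\in\sheaf[W]{n}$. This section is nonzero and has compact support $K$, so $\sheaf[W]{n}$ is non-quasianalytic.

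The only delicate point is the bookkeeping of \ref{LinearInv}: one has to make sure the pulled-back function lives on the right domain $\lambda U+a$ and that the support really is mapped as claimed. The remaining steps are routine uses of the sheaf axioms.
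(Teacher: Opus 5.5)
Your proof is correct and follows the same route as the paper: you transport the test function by a dilation–translation from \ref{LinearInv} into the target set, then extend it by zero using the gluing property of the subsheaf, which the paper uses implicitly when it says non-quasianalyticity passes to open supersets of $U_0$. The only difference is bookkeeping: the paper first replaces $U_0$ by a bounded neighbourhood of the support and maps all of it into $U$, while you move only the support into a ball in $W$ and glue with $0$ on $W\setminus K$.
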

\begin{proof}
    One direction is trivial. Suppose now that there is some $U_0$ such that
    $\sheaf[U_0]{n}\cap\D(U_0)\neq\{0\}$. It follows directly that $\sheaf[U]{n}$ is non-quasianalytic
    for all open $U\subseteq\R[n]$ with $U_0\subseteq U$.

    Let now $f\in\sheaf[U_0]{n}\cap\D(U_0)$ with $f(x_0)\neq 0$ for some $x_0\in U_0$.
    Since $\supp f$ is compact we may assume without loss of generality that $U_0$ is bounded.
    For any affine map $\Psi$ we have that $\Psi^\ast f=f\circ\Psi\in\sheaf[\Psi^{-1} (U_0)]{n}\cap 
    \D(\Psi^{-1}(U_0))$ by \ref{LinearInv}. Now for each $U\subseteq\R^n$ there is a affine map $\Psi$ such
    that $\Psi^{-1}(U_0)\subseteq U$. Thence $\sheaf[U]{n}\cap\D(U)\neq\{0\}$.
\end{proof}
\begin{Rem}\label{QuasiRemark0}
    The proof of Lemma \ref{QuasiLemma1} implies the existence os special functions
    in non-quasianalytic spaces $\sheaf[n]{U}$.
    Indeed, if $\sheaf[U]{n}$ is non-quasianalytic then there is some function
    $g\in\sheaf[U]{n}\cap\D(U)$ and some point $z_0\in U$ such that 
    $\zeta=g(z_0)\neq 0$. 
    Let $W\subsetneq U$ be an bounded open neighborhood of $\supp g$.
    Moreover, let $p\in U$ be an arbitrary point of $U$ and $V\subsetneq U$
    a neighborhood of $p$ in $U$. Then there is an affine
    map $\Psi:\,\R[n]\rightarrow\R[n]$  such that $W\subseteq\Psi(V)$.
    It follows again from \ref{LinearInv} that $\Phi^{\ast}(g)\in\sheaf[V]{n}$
\end{Rem}
We may say that the sheaf $\sheaf{n}$ is non-quasianalytic if one of the conditions of 
Lemma \ref{QuasiLemma1} is satisfied and quasianalytic otherwise.
Then $\sheaf{n}$ is quasianalytic if and only if $\sheaf[U]{n}$ is quasianalytic for any open set
$U\subseteq\R[n]$.
\begin{Lem}\label{QuasiLemma2}
Let $\sheaf{}$ be an isotropic ultradifferentiable class and $n\in\N$. Then $\sheaf{1}$ is non-quasianalytic if and only if $\sheaf{n}$ is non-quasianalytic.
\end{Lem}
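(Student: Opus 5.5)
Both directions reduce to the axioms (U3) and (U4), together with Lemma \ref{QuasiLemma1}. By that lemma it is enough to produce a single nonzero compactly supported element on one suitable open set in the target dimension.

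\emph{From $\sheaf{1}$ to $\sheaf{n}$.} Suppose $\sheaf{1}$ is non-quasianalytic. By Lemma \ref{QuasiLemma1} there is some $g\in\sheaf[I]{1}\cap\D(I)$ with $g\neq 0$, where $I\subseteq\R$ is a bounded open interval. We apply the tensor axiom \ref{Tensor} repeatedly, inducting on the number of factors. This gives
\begin{equation*}
f=g\otimes\cdots\otimes g,\qquad f(x_1,\dots,x_n)=g(x_1)\cdots g(x_n),
\end{equation*}
which lies in $\sheaf[I^n]{n}$. Its support is $(\supp g)^n$, which is compact in $I^n$. Moreover $f(t,\dots,t)=g(t)^n\neq 0$ at any point $t$ where $g(t)\neq 0$. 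Hence $\sheaf[I^n]{n}\cap\D(I^n)\neq\{0\}$, and Lemma \ref{QuasiLemma1} shows that $\sheaf{n}$ is non-quasianalytic.

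\emph{From $\sheaf{n}$ to $\sheaf{1}$.} Suppose $\sheaf{n}$ is non-quasianalytic. Take $f\in\sheaf[U]{n}\cap\D(U)$ with $f(x_0)\neq 0$ for some $x_0\in U$. We want to restrict $f$ to a line through $x_0$ and use (U3). However, (U3) as stated only concerns the standard inclusion $\R[m]\subseteq\R[n]$, so the line must be the first coordinate axis. To arrange this, use \ref{LinearInv} to translate $f$ so that $x_0$ is moved to a point of the form $(t_0,0,\dots,0)$. For instance, shift by $a=x_0-(0,\dots,0)$ so that $x_0$ goes to the origin. Translation keeps the support compact.

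Now let $V=\Set{t\in\R\given (t,0,\dots,0)\in U}$ after the translation. This is an open subset of $\R$ and it contains $0$. By (U3) the restriction $h=\lambda_{UV}(f)$ lies in $\sheaf[V]{1}$, and $h(0)=f(x_0)\neq 0$.

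The support of $h$ is the intersection of the axis with $\supp f$. Since $\supp f$ is compact and contained in $U$, this intersection is a compact subset of $V$. So $h\in\D(V)\setminus\{0\}$, and Lemma \ref{QuasiLemma1} applies once more.

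The only delicate point is the second direction. There we must make sure that the restriction in (U3) is really along a coordinate subspace passing through a point where $f\neq0$. This is why the translation invariance in \ref{LinearInv} has to be used before restricting. Dilations are not needed here, and neither are the general affine maps of Remark \ref{QuasiRemark0}.
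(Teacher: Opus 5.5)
Your proof is correct and is essentially the paper's argument. For one direction you take tensor powers via \ref{Tensor}. For the other you restrict via (U3) to the first coordinate axis, after moving a point where the function does not vanish to the origin. Lemma~\ref{QuasiLemma1} then transfers non-quasianalyticity between open sets. The only differences are that you evaluate at a diagonal point $(t,\dots,t)$ and use a plain translation, whereas the paper invokes Remark~\ref{QuasiRemark0} to get functions on $(-1,1)^n$ that do not vanish at $0$.

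There are two cosmetic slips. With the convention $\Psi(x)=x-a$ of \ref{LinearInv}, the shift bringing $x_0$ to the origin is $a=-x_0$, not $a=x_0$. Also, $\supp h$ is only contained in $\{t : (t,0,\dots,0)\in\supp f\}$ rather than equal to it, but containment is all you need.
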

\begin{proof}
Following Lemma \ref{QuasiLemma1} it is enough to find two open sets
$U\subseteq\R[n]$ and $V\subseteq\R[]$ such that
$\sheaf[U]{n}$ is non-quasianalytic if and only if $\sheaf[V]{1}$ is non-quasianalytic.
We choose $V=(-1,1)$ and $U=V^n=(-1,1)^n$.
First note that 
to each $h\in\sheaf[V]{1}\cap\D(V)$ we can associate the function
\begin{equation*}
    \tilde{h}(x)=\prod_{j=1}^n h(x_j)\qquad x=(x_1,\dotsc,x_n)\in U=V^n
\end{equation*}
and $\tilde{h}\in\sheaf[U]{n}\cap \D(U)$ by \ref{Tensor}. 
Moreover, $\tilde{h}(0)=(h(0))^n$. It follows that the non-quasianalyticity of $\sheaf[V]{1}$
implies that of $\sheaf[U]{n}$, cf. Remark \ref{QuasiRemark0}.

For the other direction we need more preparation.
If $\rho:\,\R\rightarrow\R[n]$ is the linear map given by $t\mapsto(t,0,\dotsc,0)$
then $\rho^{-1}(U)=V$ and $\rho\vert_V=\chi_{UV}$.
If $\sheaf[U]{n}$ is non-quasianalytic then by Remark \ref{QuasiRemark0} there exists
 a function $g\in\sheaf(U){n}\cap\D(U)$ with $g(0)=1$. 
 Then $f=\lambda_{UV}(g)=g\circ\rho\in\sheaf[V]{1}\cap\D(V)$ and $f(0)=g(0)=1$. 
Thus $\sheaf[V]{1}$ is non-quasianalytic.

\end{proof}
\begin{Rem}
Combining  both Lemmata \ref{QuasiLemma1} and \ref{QuasiLemma2} above we see that $\sheaf[U]{n}$
is non-quasianalytic for every open set $U\subseteq\R[n]$ and each $n\in\N$ if and only
if $\sheaf[V]{m}$ is non-quasianalytic for some open set $V\subseteq\R[m]$.
In that case we say that the isotropic ultradifferentiable sheaf $\sheaf{}$ is non-quasianalytic
and quasianalytic otherwise.
\end{Rem}

%\subsection{(Non-)Quasianalyticity}

\subsection{Microlocal sheaves}
\begin{Def}
    We say that an isotropic ultradifferentiable sheaf $\sheaf{}$ is semiregular if 
    the following conditions hold:
    \begin{enumerate}[label=(S\arabic*),itemsep=0.3em,topsep=0.2em]
        \item\label{AnalyticIncl} $\analytic$ is a subsheaf of $\sheaf{}$.
        \item\label{DerivClosed} $\sheaf{}$ is closed under derivation: For all $U\subseteq\R[n]$ open the following holds
\begin{equation*}
    f\in\sheaf[U]{n}\Longrightarrow D_jf\in\sheaf[U]{n},\;j=1,\dotsc,n.
\end{equation*}
\item\label{DivCoordinate} $\sheaf{}$ is closed under division by a coordinate: Let $f\in\sheaf[U]{n}$
such that $f(x_1,\dotsc,x_{j-1},a,x_{j+1},\dotsc,x_n)=0$ for some $a$ and $j\in\{1,\dotsc,n\}$
then there exists $g\in\sheaf[U]{n}$ such that
\begin{equation*}
    f(x)=\bigl(x_j-a\bigr)g(x),\qquad x\in U.
\end{equation*}
\item\label{Comp1} $\sheaf{}$ is closed under analytic mappings: Let $U\subseteq\R[n]$ and $V\subseteq\R[m]$ be open
sets and $\Phi: U\rightarrow V$ be a real-analytic map.
Then
\begin{equation*}
    f\in\sheaf[V]{m}\Longrightarrow \Phi^\ast f=f\circ\Phi\in\sheaf[U]{n}.
\end{equation*}
    \end{enumerate}
\end{Def}

\begin{Rem}\label{SemiRem}\leavevmode
    \begin{itemize}
\item Iterating \ref{DerivClosed} we see that for any $\alpha\in\N_0^n$ we have that
$D^\alpha f\in\sheaf[U]{n}$ for all $f\in\sheaf[U]{n}$. 
In fact, since $\sheaf[U]{n}$ is an algebra we have that 
\begin{equation}\label{PClosed}
    P\left(\sheaf[U]{n}\right)\subseteq\sheaf[U]{n}
\end{equation}
for any linear differential operator $P$ of the form
\begin{equation*}
    P(x,D)=\sum_{\abs{\alpha}\leq d} p_\alpha(x)D^\alpha,\qquad p_\alpha\in\sheaf[U]{n}.
\end{equation*}
Using also \ref{AnalyticIncl} we see that $\sheaf{n}$ is closed under analytic differential operators with real-analytic differential operators.
\item Property \ref{Comp1} allows us to define the sheaf $\sheaf{M}$ of functions of class $\sheaf{}$
on real-analytic manifolds $M$, cf.~the next section and \cite[Section 8]{HoeBook1}
\item 
We may note that it does not appear to be clear if \ref{DerivClosed} or $\ref{DivCoordinate}$ are independent of each other in view of
the Fundamental Theorem of Calculus. 
In concrete examples of ultradifferentiable sheaves like Denjoy-Carleman classes,
one can use the Fundamental Theorem of Calculus to deduce \ref{DivCoordinate}
from \ref{DerivClosed}, but this involves the defining estimates of the Denjoy-Carleman classes. It may be of interest to find an argument which works without
the use of the defining data.
    \end{itemize}

\end{Rem}

%\begin{Prop}
  %  Let $\sheaf{}$ be an isotropic ultradifferentiable sheaf.
  %  Then $\sheaf{}$ is closed under the division by a coordinate:
% Let $U\subseteq\R[n]$ (with coordinates $x=(x_1,\dotsc,x_n)$) and
% $j=1,\dotsc, n$. Suppose that $a_j$
%\end{Prop}
For each open set $U\subseteq\R[n]$ we denote by $S^\ast U=U\times S^{n-1}$ the co-spherical bundle of $U$. Note that there is a natural projection $\pi: S^\ast U\rightarrow U$ to the base.
We set
\begin{equation*}
\pot[U]{n}:=\Set*{\mathcal{V}\subseteq S^\ast U\given \mathcal{V} \text{ is a closed subset of } S^\ast U}.
\end{equation*}
We claim that $\pot{n}:U\mapsto \pot[U]{n}$, together with the restriction maps
\begin{equation*}
    \rho_{UV}: \pot[U]{n}\ni\crb\longmapsto \crb\cap S^\ast V\in\pot[V]{n} 
\end{equation*}
for $V\subseteq U\subseteq\R[n]$ defines a sheaf of sets on $\R[n]$: 
Obviously $\rho_{UU}=\Id_{\pot[U]{n}}$ for any open set $U\subseteq\R[n]$.
Moreover, for open sets $W\subseteq V\subseteq U\subseteq\R[n]$ we have that
\begin{equation*}
    \rho_{VW}\left(\rho_{UV}(\crb)\right)=\left(\crb\cap S^\ast V\right)\cap S^\ast W=\crb\cap S^\ast W
    =\rho_{UW}(\crb)
\end{equation*}
for all $\crb\in\pot[U]{n}$.

Finally, let $U\subseteq \R[n]$ be open and $(U_j)_{j\in J}$ be an open covering of $U$,
i.e.~the sets $U_j\subseteq U$ are open and $\bigcup U_j=U$.
Suppose we are given $\crb_j\in\pot[U_j]{n}$ for each $j\in J$ such that $\crb_j\cap S^\ast U_k=\crb_k\cap S^\ast U_j$ for each
pair $j,k\in J$ with $U_k\cap U_j\neq\emptyset$.
Then there is trivially a unique set $\crb$ such that 
\begin{equation*}
    \rho_{UU_j}(\crb)=\crb\cap S^\ast U_j=\crb_j\qquad \fa j\in J.
\end{equation*}
We claim that $\crb$ is closed. If $\crb$ would be not closed in $S^\ast U$ then the set 
$\mathcal{U}=S^\ast U\setminus\crb$ is not open, i.e.~there is a point $q\in\mathcal{U}$
such that for any neighborhood $\mathcal{W}$ of $q$ we have that
$\mathcal{W}\cap\crb\neq\emptyset$. 
If $j$ is such that $\pi(q)\in U_j$ then it follows that
$\mathcal{W}\cap S^\ast U_j\cap\crb=\mathcal{W}\cap \crb_j\neq\emptyset$, i.e.~every
neighborhood of $q$ in $S^\ast U_j$ has non-empty intersection with $\crb_j$.
Since $\crb_j$ is closed it follows that $q\in\crb_j\subseteq\crb$.

Analogously to before
we set $\pot{}=(\pot{n})_{n\in\N}$. Moreover let $\Distr{n}$ be the sheaf of distributions on $\R[n]$
and $\Distr{}=(\Distr{n})_{n\in\N}$.

\begin{Def}
    An semiregular ultradifferentiable sheaf $\sheaf{}$ is microlocal if there exists a 
    sheaf morphism
    $\WFR: \Distr{}\rightarrow\pot{}$, i.e.~for each $n\in\N$ and every open set $U\subseteq\R[n]$
    there is a mapping
    \begin{equation*}
\WFR: \Distr[U]{n}\ni u\longmapsto \WFR u\in\pot[U]{n},
    \end{equation*}
    with the following properties:
    \begin{enumerate}[label=(M\arabic*),itemsep=0.3em,start=0]
    \item\label{BasicWF0} $\WFR (u_1+u_2)\subseteq\WFR u_1+\WFR u_2$ for all $u_1,u_2\in\Distr[U]{n}$.
        \item\label{WFComparision} For all $n\in\N$, $U\subseteq\R[n]$ open and $u\in\Distr[U]{n}$ we have that
        $\WF u\subseteq\WFR u\subseteq\WF_A u$, where $\WF u$ is the (smooth) wavefront set and 
        $\WF_A u$ is the analytic wavefront set of $u$.
        %\item For each $n\in\N$ and $V\subseteq U\subseteq\R[n]$ open we have that
        %\begin{equation*}
         %   \WFR \left(u\vert_V\right)=\WFR u\cap S^\ast V
        %\end{equation*}
        %for every $u\in\Distr[U]{n}$.
        \item\label{WFSing} $\pi(\WFR u)=\singsupp_{\sheaf{}}u$ for all $u\in\Distr[U]{n}$
        where the $\sheaf{}$-singular support of $u$, $\singsupp_{\sheaf{}}u$,
        is the the set of points $p\in U$ such that 
        $u\vert_V\notin\sheaf[V]{n}$ for all neighborhoods $V$ of $p$.
        \item\label{WFInvolution} For each $u\in\Distr[U]{n}$ we have that
        \begin{equation*}
            \WFR \bar{u}=\sigma \left(\WFR u\right)
        \end{equation*}
        where $\sigma: S^\ast(U)\rightarrow S^\ast U$ is the antipodal mapping given by
        $\sigma(p,\xi)=(p,-\xi)$.
        %\item The microlocal elliptic regularity theorem holds for analytic operators: Let $U\subseteq\R[n]$ be open and
       %$P=\sum_{\abs{\alpha}\leq d} p_\alpha D^\alpha$ be an analytic differential operator 
        %in $U$ (i.e.~$p_\alpha\in\analytic(U)$). Then
        %\begin{equation*}
         %   \WFR Pu\subseteq\WFR u\subseteq\WFR Pu\cup\Char P
        %\end{equation*}
        %for every $u\in\Distr[U]{n}$. Here 
      %  \begin{equation*}
        %    \Char P=\Set*{(x,\xi)\in S^\ast U\given p_d(x,\xi)=0}
       % \end{equation*}
       % is the characteristic set of $P$ and 
        %\begin{equation*}
           % p_d(x,\xi)=\sum_{\abs{\alpha}=d}p_\alpha(x)\xi^\alpha
        %\end{equation*}
        %is the principal symbol of $P$.
        \item\label{WFanalMap} Let  $U\subseteq\R[n]$, $V\subseteq\R[m]$ be open sets
        and $\Phi: U\rightarrow V$ be an analytic mapping and let
        \begin{equation*}
            N_\Phi=\Set*{(\Phi(x),\eta)\in S^{\ast}V\given \prescript{t}{}{\Phi}^\prime(x)\eta=0}
        \end{equation*}
        be the set of normals of $\Phi$. Then the pull-back $\Phi^\ast u\in\Distr[U]{n}$ is well-defined and
        \begin{equation*}
            \WFR\left(\Phi^\ast u\right)\subseteq\Phi^\ast\left(\WFR u\right)=
            \Set*{\left(x,\frac{\prescript{t}{}{\Phi}^\prime(x)\eta}{\abs*{\prescript{t}{}{\Phi}^\prime(x)\eta}}\right)
            \in S^\ast U\given (\Phi(x),\eta)\in S^\ast V}
        \end{equation*}
        for $u\in\Distr[V]{m}$ with $\WFR u\cap N_\Phi=\emptyset$.
        \item\label{WFproj} Let $u\in\mathcal{E}^\prime(\R[n+m])$ and $\sheaf{}$ be a microlocal sheaf. If $u_1\in\mathcal{E}^\prime(\R[n])$ is the distribution given by
        \begin{equation*}
            \langle u_1,\varphi\rangle=\langle u, \varphi\otimes 1\rangle,\qquad \varphi\in\E(\R^n)
        \end{equation*}
        then
        \begin{equation*}
            \WFR u_1\subseteq\Set*{(x,\xi)\in S^\ast \R[n]\given \ex y\in\R[m]\;\,(x,y,\xi,0)\in \WFR u}.
        \end{equation*}
        \item\label{WFTensor} Let $U\subseteq\R[n]$, $V\subseteq\R[m]$ be two open sets and $u\in\Distr[U]{n}$, $v\in\Distr[V]{m}$. Then $u\otimes v\in\Distr[U\times V]{n+m}$ and 
        \begin{equation*}
            \WFR (u\otimes v)\subseteq \left(\WFR u\times\WFR v\right) \cup\left((\supp u\times\{0\}\times\WFR v)\right)\cup
            \left(\WFR u\times(\supp v\times\{0\})\right).
        \end{equation*}
    \end{enumerate}
\end{Def}
A close inspection of \cite[Section 8.5]{HoeBook1} yields that we can prove
the following three statements for microlocal sheaves, cf.~\cite[Section 2]{fuerdoes2025}. 
\begin{Prop}\label{WFProp1}
    Let $\sheaf{}$ be a microlocal sheaf and $u,v\in\Distr[U]{n}$ such that
    $\WFR u\cap\sigma(\WFR v)=\emptyset$. Then $uv\in\Distr[U]{n}$  is well-defined
    and 
    \begin{equation*}
        \WFR (uv)\subseteq\Set*{\left(x,\frac{\xi+\eta}{\abs{\xi+\eta}}\right)\in
        S^\ast U\given (x,\xi)\in\WFR u\text{ or } \xi=0,\; (x,\eta)\in\WFR v
        \text{ or }\eta=0}.
    \end{equation*}
\end{Prop}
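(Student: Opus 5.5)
Following Hörmander's approach in \cite[Theorem 8.2.10]{HoeBook1}, we realize the product as the pull-back of the tensor product under the diagonal map. Let $\Delta: U\rightarrow U\times U$, $\Delta(x)=(x,x)$. This is a real-analytic (indeed linear) map, so \ref{WFanalMap} is available, and $uv$ will be defined as $\Delta^\ast(u\otimes v)$.

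\emph{Step 1: the tensor product.} By \ref{WFTensor}, $w=u\otimes v\in\Distr[U\times U]{2n}$ satisfies
\begin{equation*}
\WFR w\subseteq\Set*{(x,y,\xi,\eta)\given (x,\xi)\in\WFR u\text{ or }\xi=0,\ (y,\eta)\in\WFR v\text{ or }\eta=0,\ (\xi,\eta)\neq 0},
\end{equation*}
where the covector $(\xi,\eta)$ is normalized to lie on the sphere.

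\emph{Step 2: the normals of $\Delta$.} Since $\prescript{t}{}{\Delta}^\prime(x)(\xi,\eta)=\xi+\eta$, the set of normals is
\begin{equation*}
N_\Delta=\Set*{(x,x,\xi,-\xi)\given \xi\neq 0}.
\end{equation*}
A point of $\WFR w\cap N_\Delta$ would need $\eta=-\xi\neq0$, hence both $\xi\neq0$ and $\eta\neq0$. This forces $(x,\xi)\in\WFR u$ and $(x,-\xi)\in\WFR v$, i.e.~$(x,\xi)\in\WFR u\cap\sigma(\WFR v)$, which is empty by hypothesis. So $\WFR w\cap N_\Delta=\emptyset$.

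\emph{Step 3: the pull-back.} Now \ref{WFanalMap} gives that $\Delta^\ast w$ is well-defined, and we set $uv:=\Delta^\ast w$. It also gives $\WFR(uv)\subseteq\Delta^\ast(\WFR w)$, which consists of the points $(x,(\xi+\eta)/\abs{\xi+\eta})$ with $(x,x,\xi,\eta)\in\WFR w$. Inserting the description from Step 1 yields exactly the stated set.

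\emph{Main obstacle.} The delicate point is that the product must actually be well-defined and consistent with the usual one. By \ref{WFComparision}, $\WF u\subseteq\WFR u$, so the hypothesis implies the classical condition $\WF u\cap\sigma(\WF v)=\emptyset$. Hence Hörmander's definition via the diagonal pull-back applies and agrees with $\Delta^\ast w$; this also covers the case where one factor is smooth.

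A second subtlety is that \ref{WFanalMap} assumes the pull-back is defined through the smooth theory. For this we use the classical fact \cite[Theorem 8.2.4]{HoeBook1} together with $N_\Delta\cap\WF w\subseteq N_\Delta\cap\WFR w=\emptyset$.

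One must also check the bookkeeping of the zero-section terms in \ref{WFTensor}: the factors $\supp u\times\{0\}$ and $\supp v\times\{0\}$ should be read as pairs (point, zero covector). With that reading, the cases $\xi=0$ or $\eta=0$ in the statement arise precisely from these terms.
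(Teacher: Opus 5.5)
Your proof is correct and follows the paper's own argument. The paper likewise defines $uv:=\delta^\ast(u\otimes v)$ via the diagonal map $\delta$, notes that ${}^t\delta^\prime(x)(\xi,\eta)=\xi+\eta$, and concludes from \ref{WFanalMap} and \ref{WFTensor}. You additionally check explicitly that $\WFR(u\otimes v)\cap N_\delta=\emptyset$ follows from $\WFR u\cap\sigma(\WFR v)=\emptyset$, and that $\WF\subseteq\WFR$ makes the product agree with H\"ormander's; these are details the paper leaves implicit.
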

\begin{proof}
    As in \cite[Theorem 8.2.10]{HoeBook1} the product is defined as
    $uv\coloneqq \delta^\ast(u\otimes v)$ where
    $\delta: U\rightarrow U\times U$ is the diagonal mapping $u(x)=(x,x)$.
    Then $\prescript{t}{}{\Phi}^\prime(x)(\xi,\eta)=\xi+\eta$ and thus
    the statement follows from \ref{WFanalMap} and \ref{WFTensor}.
\end{proof}

\begin{Cor}\label{WFCor1}
Let $\sheaf{}$ be a microlocal sheaf and $a\in\sheaf[U]{n}$.
Then
\begin{equation*}
    \WFR (au)\subseteq\WFR u \qquad \fa u\in\Distr[U]{n}.
\end{equation*}
\end{Cor}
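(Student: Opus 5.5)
Apply Proposition \ref{WFProp1} with $v=a$, viewed as a distribution via $a\in\sheaf[U]{n}\subseteq\E_n(U)\subseteq\Distr[U]{n}$. First compute $\WFR a$. Since $a\in\sheaf[U]{n}$, every point $p\in U$ has a neighborhood $V$ (e.g.\ $V=U$) with $a\vert_V\in\sheaf[V]{n}$, so $\singsupp_{\sheaf{}}a=\emptyset$. By \ref{WFSing}, $\pi(\WFR a)=\emptyset$, hence $\WFR a=\emptyset$. Consequently the hypothesis $\WFR u\cap\sigma(\WFR a)=\emptyset$ of Proposition \ref{WFProp1} holds trivially for every $u\in\Distr[U]{n}$.

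Next, check that the product $ua$ defined in Proposition \ref{WFProp1} as $\delta^\ast(u\otimes a)$ agrees with the usual product of a distribution by the smooth function $a$. This is the standard compatibility from \cite[Theorem 8.2.10]{HoeBook1}: the pullback by $\delta$ is defined by continuity from smooth functions, and for smooth $a$ it gives the classical product. I take this as implicit in the construction of Proposition \ref{WFProp1}. If it needs justification, approximate $u$ by smooth $u_k\to u$ in the topology where $\delta^\ast$ is sequentially continuous; there $\delta^\ast(u_k\otimes a)=u_ka\to ua$.

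Finally, read off the inclusion. Since $\WFR a=\emptyset$, the only admissible $\eta$ in the set on the right-hand side of Proposition \ref{WFProp1} is $\eta=0$. A point $(x,(\xi+\eta)/\abs{\xi+\eta})$ therefore has $\eta=0$, and then $\xi\neq 0$ is forced for the expression to make sense. So the point is $(x,\xi/\abs{\xi})$ with $(x,\xi)\in\WFR u$; the case $\xi=\eta=0$ is excluded because it does not define a point of $S^\ast U$. Hence $\WFR(au)\subseteq\WFR u$.

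The main subtlety is this last bookkeeping with the convention ``$\xi=0$ or $\eta=0$'': the degenerate case $\xi=\eta=0$ must not contribute. The identification of $\delta^\ast(u\otimes a)$ with $au$ is routine.
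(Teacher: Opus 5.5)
Your proposal is correct and follows the same route as the paper. You apply Proposition \ref{WFProp1} with $v=a$, use that $\delta^\ast(u\otimes a)$ agrees with the usual product by a smooth function (cf.~\cite[p.~267]{HoeBook1}), and read off $\WFR(au)\subseteq\WFR u$, making explicit the step $\WFR a=\emptyset$ via \ref{WFSing} that the paper leaves tacit.
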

\begin{proof}
    The product of two distributions defined above extends the usual product
    of distributions with smooth functions, cf.~\cite[p.~267]{HoeBook1}.
    Thence Proposition \ref{WFProp1} implies that 
    \begin{equation*}
        \WFR (au)\subseteq\Set*{(x,\eta)\in S^\ast U\given (x,\eta)\in \WFR u}.
    \end{equation*}
\end{proof}

\begin{Prop}\label{WFProp2}
    Let $\sheaf{}$ be a microlocal sheaf,  $U\subseteq\R[n]$ and $V\subseteq\R[m]$ be open sets and
        $K\in\Distr[U\times V]{n+m}$ be a distribution such that the map $\supp K\rightarrow U$ is proper. Then
        \begin{equation*}
            \WFR \mathsf{K}u\subseteq\Set*{(x,\xi)\in S^\ast U\given \ex y\in\supp u:\;
            (x,y;\xi,0)\in\WFR K},\qquad u\in\sheaf[V]{m}
        \end{equation*}
        where $\mathsf{K}$ is the linear operator with kernel $K$.
\end{Prop}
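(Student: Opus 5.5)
Following \cite[Theorem 8.2.12]{HoeBook1}, I will write $\mathsf{K}u$ as a composition of the operations already controlled by the axioms. Concretely, for $u\in\sheaf[V]{m}$ (which I may cut off by a test function when $\supp u$ is not compact, using the properness of $\supp K\to U$ and the sheaf property of $\WFR$ to reduce to $u$ with compact support), I write
\begin{equation*}
    \mathsf{K}u = \pi_{1\ast}\bigl(K\cdot(1\otimes u)\bigr),
\end{equation*}
where $1\otimes u\in\Distr[U\times V]{n+m}$ and $\pi_{1\ast}$ is integration in the $y$-variables, as in \ref{WFproj}. The proof then consists of three steps: estimate $\WFR(1\otimes u)$, estimate the product $K\cdot(1\otimes u)$, and estimate the push-forward.

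For the first step, $1\in\analytic\subseteq\sheaf{}$ by \ref{AnalyticIncl}, so $\WFR 1=\emptyset$ by \ref{WFSing}. Since $u\in\sheaf[V]{m}$, also $\WFR u=\emptyset$, and then \ref{WFTensor} gives $\WFR(1\otimes u)=\emptyset$. More simply, $1\otimes u\in\sheaf[U\times V]{n+m}$ by \ref{Tensor}. In either case $1\otimes u$ is a function of class $\sheaf{}$, so Corollary \ref{WFCor1} yields $\WFR\bigl(K\cdot(1\otimes u)\bigr)\subseteq\WFR K$. The support of this product lies in $\supp K\cap(U\times\supp u)$.

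For the third step I will localize in $x$. Fix $x_0\in U$ and choose $\chi\in\D(U)$ equal to $1$ near $x_0$. Then $w=(\chi\otimes 1)K(1\otimes u)$ has compact support by properness (or by compactness of $\supp u$ after the reduction), and Corollary \ref{WFCor1} still gives $\WFR w\subseteq\WFR K$. Extending $w$ by zero, I may regard it as an element of $\mathcal{E}'(\R[n+m])$; this does not enlarge $\WFR$ over $U\times V$, since $\WFR$ is a sheaf morphism and $w$ vanishes near the boundary. Now \ref{WFproj} applies and gives
\begin{equation*}
    \WFR(\chi\mathsf{K}u)\subseteq\Set*{(x,\xi)\given \ex y\;(x,y;\xi,0)\in\WFR w}.
\end{equation*}
Any such $y$ lies in $\supp u$. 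Since $\chi\mathsf{K}u=\mathsf{K}u$ near $x_0$ and $\WFR$ is a sheaf morphism, this gives the claimed inclusion near $x_0$. As $x_0$ was arbitrary, the inclusion holds on all of $U$.

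The main obstacle is the justification of the cut-offs and of the extension by zero. Specifically, I must check that multiplying by $\chi\otimes1$ and by a cut-off in $y$ does not enlarge $\WFR$ beyond $\WFR K$ restricted to $y\in\supp u$. For this I will use Corollary \ref{WFCor1} with cut-offs that are real-analytic, which is impossible in compact support, so instead of cut-off functions I will work via restriction maps $\rho$ and the sheaf property of $\pot{}$. I also need that the pairing $\langle u,\varphi\otimes1\rangle$ in \ref{WFproj} coincides with $\pi_{1\ast}$, which is immediate from the definitions. If non-analytic cut-offs are needed, I will use the non-quasianalytic case via Remark \ref{QuasiRemark0}, or alternatively handle them with smooth cut-offs together with the fact that $\WFR$ is local, which is guaranteed by \ref{WFSing} and the sheaf structure.
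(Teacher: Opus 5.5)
Your argument is correct and is essentially the paper's proof, which follows H\"ormander's Theorem 8.5.4$'$. You replace $K$ by $K(1\otimes u)$, whose $\WFR$ lies in $\WFR K$ over $U\times\supp u$ because $1\otimes u$ is of class $\mathcal{R}$. You then cut off to a compactly supported distribution, apply \ref{WFproj}, and conclude by locality.

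One correction is needed. The function $\chi\otimes 1$ is not of class $\mathcal{R}$ in general, and neither is any cut-off of $u$ (which properness makes unnecessary anyway). So Corollary~\ref{WFCor1} does not give $\WFR w\subseteq\WFR K$ globally. What you actually need, and what the sheaf-morphism property gives, is $\WFR w\cap S^\ast(W\times V)=\WFR\bigl(K(1\otimes u)\bigr)\cap S^\ast(W\times V)$ for an open $W\ni x_0$ on which $\chi\equiv 1$, as you indicate at the end.
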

\begin{proof}
We can use the proof of \cite[Theorem $8.5.4^\prime$]{HoeBook1} nearly verbatim:
After replacing $K$ with $(1\otimes u)K$ we can assume that $\varphi\vert_W=1$.
Let now $W\Subset U\times V$ and $\varphi\in\D(U\times V)$ such that $u\vert_W\equiv 1$.
Then applying \ref{WFproj} gives the statement for $\varphi K$ and since the choice
of $W$ has been arbitrary the general statement follows.
\end{proof}
For the last statement we need some more notations:
For a microlocal sheaf $\sheaf{}$, open sets $U\subseteq\R[n]$, $V\subseteq\R[m]$
and $u\in\Distr[U\times V]{n+m}$ we set
\begin{align*}
    \WF_{\sheaf{}}(u)_U&\coloneqq\Set*{(x,\xi)\in S^\ast U\given \ex y\in V\;\; (x,y,\xi,0)\in\WFR u}\\
    \WF^\prime_{\sheaf{}}(u)&\coloneqq\Set*{(x,y,\xi,\eta)\in S^{\ast}(U\times V)\given (x,y,\xi,-\eta)\in\WFR u}\\
    \WF^\prime_{\sheaf{}}(u)_V&\coloneqq \Set*{(y,\eta)\in S^\ast V\given \ex x\in U\;\; (x,y,0,\eta)\in\WFR u}
\end{align*}
\begin{Prop}\label{WFProp3}
    Let $\sheaf{}$ be a microlocal sheaf, $U\subseteq\R[n]$, $V\subseteq\R[m]$ be open sets, $K\in\Distr[U\times V]{n+m}$.  
If $u\in\E^\prime(V)$ such that $\WFR u\cap \WF^\prime_{\sheaf{}}(K)_V$
then
    	\begin{equation*}
		\begin{split}
			\WFR(\mathsf{K}u)&\subseteq\WF_{\sheaf{}}(K)_U\cup\left(
			\WF_{\sheaf{}}^\prime(K)\circ\WFR u\right)\\
			&=\WF_{\sheaf{}}(K)_U\cup\Set*{(x,\xi)\in S^\ast U\given
            \ex (x,\eta)\in\WFR u:\;\;
				\left(x,y,\frac{\xi}{\abs{\xi}+\abs{\eta}},-
                \frac{\eta}{\abs{\xi}+\abs{\eta}}\right)\in\WFR(K)}.
		\end{split}
	\end{equation*}
\end{Prop}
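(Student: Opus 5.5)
We follow the proof of \cite[Theorem 8.2.13]{HoeBook1}, where the kernel theorem is written as a composition of the elementary operations whose $\sheaf{}$-wavefront behaviour is controlled by the axioms. Concretely,
\begin{equation*}
\mathsf{K}u=(\pi_U)_\ast\bigl(K\cdot(1\otimes u)\bigr),
\end{equation*}
where $\pi_U:U\times V\rightarrow U$ is the projection. After localizing we may assume that $K$ has compact support and that $u\in\E^\prime(V)$. Localizing is allowed because $\WFR$ is a sheaf morphism into $\pot{}$, so the $\sheaf{}$-wavefront set of $\mathsf{K}u$ over a relatively compact $U'\Subset U$ depends only on $K$ near $\overline{U'}\times\supp u$. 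We then replace $K$ by $\varphi K$ with $\varphi\in\D(U\times V)$ equal to $1$ near $\overline{U'}\times\supp u$. Corollary~\ref{WFCor1} gives $\WFR(\varphi K)\subseteq\WFR K$, since $\varphi\in\D\cap\analytic$ is not available in the quasianalytic case. We instead take $\varphi$ smooth and note that $\varphi K=K$ near the relevant set, so $\WFR$ is unchanged there by the sheaf property.

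The key steps are as follows.

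\textbf{Step 1.} Write $1\otimes u=\pi_V^\ast u$, where $\pi_V:U\times V\rightarrow V$ is the (analytic) projection. Using \ref{WFTensor} with $\WFR 1=\emptyset$ (by \ref{WFComparision}, since $1$ is analytic), or directly \ref{WFanalMap}, we get
\begin{equation*}
\WFR(1\otimes u)\subseteq\Set*{(x,y,0,\eta)\given (y,\eta)\in\WFR u}.
\end{equation*}

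\textbf{Step 2.} The hypothesis $\WFR u\cap\WF^\prime_{\sheaf{}}(K)_V=\emptyset$ says exactly that $\WFR K\cap\sigma\bigl(\WFR(1\otimes u)\bigr)=\emptyset$. Hence Proposition~\ref{WFProp1} applies: the product $K(1\otimes u)$ is well-defined, and its $\sheaf{}$-wavefront set consists of points $(x,y,\xi+0,\zeta+\eta)$, normalized, where $(x,y,\xi,\zeta)\in\WFR K$ or $(\xi,\zeta)=0$, and $(y,\eta)\in\WFR u$ or $\eta=0$.

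\textbf{Step 3.} Apply \ref{WFproj} to the compactly supported distribution $w=K(1\otimes u)$. Pairing $w$ with $\varphi\otimes1$ is precisely $\langle\mathsf{K}u,\varphi\rangle$. So $\WFR(\mathsf{K}u)$ is contained in the set of $(x,\xi)$ such that $(x,y,\xi,0)\in\WFR w$ for some $y$.

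\textbf{Step 4.} Read off the two cases from the normalized description in Step 2 with second covector zero, that is, $\zeta=-\eta$. If $\eta=0$, we get $(x,y,\xi,0)\in\WFR K$, which is $\WF_{\sheaf{}}(K)_U$. If $\eta\neq0$, with $(y,\eta)\in\WFR u$ and $(x,y,\xi,-\eta)\in\WFR K$ after renormalization by $\abs{\xi}+\abs{\eta}$, we get the composition term $\WF^\prime_{\sheaf{}}(K)\circ\WFR u$. The case $(\xi,\zeta)=0$ with $\eta\neq0$ would force a point $(x,y,0,\eta)$ with zero first component, which contributes nothing since we need first component $\xi\neq0$ in $S^\ast U$.

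The main obstacle is Step 2 together with the normalization bookkeeping. Proposition~\ref{WFProp1} is phrased on the cosphere bundle, so the sum of a covector and the zero covector, and the rescaling of $(\xi,-\eta)$, must be tracked carefully. In particular one must check that the renormalized pair $(\xi,-\eta)/(\abs{\xi}+\abs{\eta})$ lies in $\WFR K$ up to the positive rescaling implicit in identifying $S^\ast$ with conic sets. I would handle this by working throughout with the conic sets generated by the closed subsets of $S^\ast$ and normalizing only at the end.

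A secondary point is the localization argument in the quasianalytic case. Only smooth cutoffs are used there, and these are harmless because we only need equality of $K$ and $\varphi K$ on an open set, combined with the sheaf property of $\WFR$.
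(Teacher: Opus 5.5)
Your proposal is correct and follows the paper's proof essentially step for step. The paper likewise bounds $\WFR(1\otimes u)$ via \ref{WFTensor}, controls $K_u=K(1\otimes u)$ with Proposition~\ref{WFProp1}, and then pushes forward with \ref{WFproj} using $\mathsf{K}u=\langle K_u,\,.\,\otimes 1\rangle$. Your added care with the cutoff (using the sheaf property instead of Corollary~\ref{WFCor1}, since a smooth cutoff need not lie in $\sheaf{}$) and with the conic normalization supplies details the paper leaves implicit.
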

\begin{proof}
    We can again follow the proof of \cite[Theorem 8.5.5]{HoeBook1}, cf.~\cite{fuerdoes2025}, with only very minor modifications.
    It follows from \ref{WFTensor} that
    \begin{equation*}
        \WFR(1\otimes u)\subseteq\Set*{(x,y,0,\eta)\in S^\ast(U\times V\given
        (y,\eta)\in\WFR u}.
    \end{equation*}
    If we set $K_u=K(1\otimes u)$ then Proposition \ref{WFProp1} gives that
    \begin{equation*}
    \begin{split}
        \WFR K_u&=\Set*{\left(x,y,\xi,\frac{\eta+\eta^\prime}{\abs{\eta+\eta^\prime}}\right)\in S^\ast(U\times V)\given (y,\eta)\in\WFR u,\;(x,y,\xi,\eta^\prime)\in\WFR K}\\
        &\qquad \cup \WFR K\cup\WFR(1\otimes u).
    \end{split}
    \end{equation*}
    Since $\mathsf{K}u=\langle  K_u,\,.\,\otimes 1\rangle$
    the assertion follows from \ref{WFproj}.
\end{proof}
\subsection{Ultradifferentiable hypoellipticity of analytic partial differential operators}
\begin{Def}
Let $u\subseteq\R[n]$ be an open set and $P$ be a linear differential operator with coefficients in $\sheaf[U]{n}$. If $\sheaf{}$ is a semiregular sheaf then we say that
$P$ is $\sheaf{}$-hypoelliptic in $U$ if 
\begin{equation*}
    \singsupp_{\sheaf{}}Pu=\singsupp_{\sheaf{}}u
\end{equation*}
for all $u\in\Distr[U]{n}$.
\end{Def}

As a first application we show that $\WFR$ is invariant under the action of linear
partial differential operators with coefficients in $\sheaf{}$.
More precisely 
we notice we can generalize the main statements of
\cite{fuerdoes2025} to microlocal sheaves $\sheaf{}$.
First, combining \cite[Theorem 3.1]{fuerdoes2025} with
Proposition \ref{WFProp3} 
gives the following statement of the $\sheaf{}$-microlocal properties
of analytic Fourier integral operators. For the notions used, see 
\cite{TrevesBook80a,TrevesBook80b,TrevesBook25}.

\begin{Thm}
    Let $U\subseteq\R[n]$ and $V\subseteq\R[n]$ be two open sets and $\Gamma\subseteq\R[N]$ an open cone.
    Consider $A:\E_m^\prime(V)\rightarrow \Distr[U]{n}$ be an analytic Fourier integral operator with (real-valued) phase-function $\Phi$ on $U\times V\times\Gamma$.
Then
	\begin{equation}
		\WFR Au\subseteq\mathfrak{R}_\Phi(\WFR u),
		\qquad \fa u\in \E^\prime(\Omega_2)
	\end{equation}
	for any microlocal sheaf $\sheaf{}$, where
	\begin{equation*}
		\mathfrak{R}_\Phi(E)=\Set*{(x,\xi)\in T^\ast\Omega_1\setminus\{0\}\given \ex (y,\eta)\in E\;
			\text{with } \xi=d_x\Phi(x,y,\theta)\;\&\; \eta=-d_y\Phi(x,y,\theta)\text{ for some $\theta\in\Gamma$}
		}.
	\end{equation*}
	for any subset $E\subseteq T^\ast\Omega_2\!\setminus\{0\}$.
\end{Thm}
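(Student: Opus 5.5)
The plan is to reduce the statement to Proposition \ref{WFProp3}, applied to the Schwartz kernel $K_A$ of $A$. The analytic information is supplied by \cite[Theorem 3.1]{fuerdoes2025}, which controls the analytic wavefront set of the kernel of an analytic Fourier integral operator. Formally the kernel is the oscillatory integral
\begin{equation*}
K_A(x,y)=\int_\Gamma e^{i\Phi(x,y,\theta)}a(x,y,\theta)\,d\theta,
\end{equation*}
with an analytic amplitude $a$. The first step is to bound $\WFR K_A$. By the classical (analytic) stationary phase description we have
\begin{equation*}
\WF_A K_A\subseteq\Set*{(x,y,d_x\Phi,d_y\Phi)\given d_\theta\Phi(x,y,\theta)=0,\ \theta\in\Gamma},
\end{equation*}
after normalising the covectors to $S^\ast(U\times V)$. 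This is exactly the content of \cite[Theorem 3.1]{fuerdoes2025}. Property \ref{WFComparision} then gives $\WFR K_A\subseteq \WF_A K_A$, so the same inclusion holds for $\WFR K_A$. No new ultradifferentiable estimate is needed here; this is the reason the theorem holds uniformly for every microlocal sheaf $\sheaf{}$.

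The second step is to verify the hypotheses of Proposition \ref{WFProp3} and identify the three sets appearing there.
\begin{itemize}
\item Since $\Phi$ is a phase function, $d_x\Phi\neq 0$ and $d_y\Phi\neq 0$ on the critical set, in the standard nondegeneracy setting of \cite{TrevesBook80b}.
\item Hence no point of the form $(x,y,\xi,0)$ or $(x,y,0,\eta)$ lies in $\WF_A K_A$.
\item Consequently $\WF_{\sheaf{}}(K_A)_U=\emptyset$ and $\WF^\prime_{\sheaf{}}(K_A)_V=\emptyset$.
\end{itemize}
The condition $\WFR u\cap\WF^\prime_{\sheaf{}}(K_A)_V=\emptyset$ is therefore automatic for every $u\in\E^\prime(V)$. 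Proposition \ref{WFProp3} then yields $\WFR(Au)\subseteq \WF^\prime_{\sheaf{}}(K_A)\circ\WFR u$. Composing the bound on $\WF^\prime K_A$ from the first step with $\WFR u$ gives the set of $(x,\xi)$ for which there exist $(y,\eta)\in\WFR u$ and $\theta\in\Gamma$ with $\xi=d_x\Phi(x,y,\theta)$ and $\eta=-d_y\Phi(x,y,\theta)$. The sign comes from the definition of $\WF^\prime$. Identifying cone directions with points of $S^\ast$ via normalisation, this set is $\mathfrak{R}_\Phi(\WFR u)$.

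I expect the main obstacle to be the first step, because it depends on the precise hypotheses of \cite[Theorem 3.1]{fuerdoes2025}. If that theorem is stated only for operators with $\WF_A$ estimates in the general setting without nondegeneracy, then $d_x\Phi$ or $d_y\Phi$ may vanish on the critical set. In that case $\WF_{\sheaf{}}(K_A)_U$ is nonempty, and I would first try to absorb it into $\mathfrak{R}_\Phi$ by allowing $\eta=0$ in the composition. If the kernel is not properly supported, I would localise with cutoffs $\varphi\in\D(U\times V)$. I would use \ref{BasicWF0} and Corollary \ref{WFCor1}, which shows that multiplication by cutoffs does not enlarge $\WFR$, and then patch the local statements using the sheaf property of $\pot{}$ established above.
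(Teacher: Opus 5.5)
Your proposal is correct and follows the paper's own route. The paper only says to combine \cite[Theorem 3.1]{fuerdoes2025} with Proposition \ref{WFProp3}, and you make that explicit: the analytic wavefront bound on the kernel $K_A$ passes to $\WFR K_A$ via \ref{WFComparision}, and the side conditions of Proposition \ref{WFProp3} become vacuous because $d_x\Phi\neq 0$ and $d_y\Phi\neq 0$ on the critical set.

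Two minor remarks. First, that non-vanishing is part of Treves' definition of an (operator) phase function for Fourier integral operators, not a ``nondegeneracy'' condition. Second, the paper's $\mathfrak{R}_\Phi$ omits the condition $d_\theta\Phi=0$, so the composition you obtain is contained in $\mathfrak{R}_\Phi(\WFR u)$ rather than equal to it, which is all that is needed.
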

Another case are analytic pseudodifferential operators, cf.~\cite{TrevesBook25}.
\begin{Thm}\label{AnalyticPseudo}
Let $P$ be an analytic pseudodifferential operator defined on some open set $U\subseteq\R[n]$ and $\sheaf{}$ be a microlocal sheaf. Then
\begin{equation*}
    \WFR Pu\subseteq\WFR u\qquad \fa u\in\E^\prime(U).
\end{equation*}
\end{Thm}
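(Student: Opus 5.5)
The plan is to treat $P$ as an operator with Schwartz kernel $K_P\in\Distr[U\times U]{2n}$ and use Proposition \ref{WFProp3}. The key input is the microlocal structure of the kernel of an analytic pseudodifferential operator. By definition (cf.~\cite{TrevesBook25}), $K_P$ is real-analytic off the diagonal, and its analytic wavefront set lies in the conormal bundle of the diagonal:
\begin{equation*}
    \WF_A K_P\subseteq\Set*{(x,x,\xi,-\xi)\in S^\ast(U\times U)}.
\end{equation*}
By \ref{WFComparision} we have $\WFR K_P\subseteq\WF_A K_P$, so the same inclusion holds for $\WFR K_P$. This is the step that brings in analyticity: the comparison axiom converts the classical analytic microlocal information into information about $\WFR$.

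With this inclusion the terms of Proposition \ref{WFProp3} are read off directly. First, no point of $\WFR K_P$ has the form $(x,y,\xi,0)$ or $(x,y,0,\eta)$, since the two covector components are $\xi$ and $-\xi$ and $\xi\neq 0$. Hence $\WF_{\sheaf{}}(K_P)_U=\emptyset$ and $\WF^\prime_{\sheaf{}}(K_P)_V=\emptyset$. In particular, the hypothesis $\WFR u\cap\WF^\prime_{\sheaf{}}(K_P)_V=\emptyset$ of Proposition \ref{WFProp3} holds trivially for every $u\in\E^\prime(U)$. Second, $\WF^\prime_{\sheaf{}}(K_P)$ is contained in the graph of the identity relation $\Set{(x,x,\xi,\xi)}$. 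Therefore the composition $\WF^\prime_{\sheaf{}}(K_P)\circ\WFR u$ is contained in $\WFR u$, after normalising the covectors on the sphere. Proposition \ref{WFProp3} then yields $\WFR Pu\subseteq\emptyset\cup\WFR u$, which is the claim.

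The main obstacle is a technical point about supports. Proposition \ref{WFProp3} (like \cite[Theorem 8.5.5]{HoeBook1}) implicitly uses that $\mathsf{K}u$ is defined via the pairing $\langle K(1\otimes u),\,\cdot\,\otimes 1\rangle$ and the projection axiom \ref{WFproj}, and that axiom is formulated only for compactly supported distributions. A general analytic pseudodifferential operator need not be properly supported. To handle this, I would localise: fix $x_0$ and a cutoff $\chi\in\D(U)$ with $\chi=1$ near $x_0$. The operator $\chi P$ still has kernel $(\chi\otimes 1)K_P$, and its $\WFR$ is controlled by Corollary \ref{WFCor1}, since $\chi$ is smooth but not in $\sheaf{}$. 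To avoid this issue, I would instead write $P=P_0+R$, where $P_0$ is properly supported and $R$ has a real-analytic kernel, as in the standard analytic calculus. Then $Ru$ is real-analytic, so $\WFR Ru=\emptyset$ by \ref{AnalyticIncl} and \ref{WFSing}. The term $P_0u$ is treated as above. Since $u$ has compact support, $K_{P_0}(1\otimes u)$ can be cut off in $x$ locally without changing anything near $x_0$, so \ref{WFproj} applies. Finally, \ref{BasicWF0} combines the two pieces.
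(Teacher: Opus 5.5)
Your first two paragraphs are correct and are essentially the paper's argument. The paper states the theorem without a separate proof, as the pseudodifferential case of the preceding Fourier integral operator result, which rests on Proposition \ref{WFProp3}. For the phase $(x-y)\cdot\theta$ the relation $\mathfrak{R}_\Phi$ is the identity. Your computation shows exactly this: \ref{WFComparision} turns $\WF_A K_P\subseteq\{(x,x,\xi,-\xi)\}$ into the same bound for $\WFR K_P$. Then $\WF_{\sheaf{}}(K_P)_U$ and $\WF^\prime_{\sheaf{}}(K_P)_V$ are empty, and $\WF^\prime_{\sheaf{}}(K_P)$ is contained in the identity relation.

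The last paragraph should be discarded, because the splitting $P=P_0+R$ it relies on does not exist in general. Take $P_0$ properly supported and $R$ with a real-analytic kernel. Then $K_{P_0}=K_P-R$ is real-analytic on $U\times U\setminus\Delta_U$, where $\Delta_U$ is the diagonal. It also vanishes on the nonempty open complement of its proper support. For connected $U$ and $n\geq 2$ the set $U\times U\setminus\Delta_U$ is connected, so unique continuation gives $\supp K_{P_0}\subseteq\Delta_U$. Then $P$ would be a differential operator modulo an analytic-regularizing one, which fails already for the symbol $(1+\abs{\xi}^2)^{-1/2}$. The usual cutoff construction with a smooth $\chi(x,y)$ only makes $R$ have a $C^\infty$ kernel. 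That says nothing about $\WFR$ when $\sheaf{}\subsetneq\E$, for example in the quasianalytic case.

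You do not need any splitting. Proposition \ref{WFProp3} has no proper-support hypothesis, and the localisation in your final sentence works for $K_P$ itself. Fix $x_0\in U$ and $\varphi\in\D(U)$ with $\varphi=1$ on a neighbourhood $W$ of $x_0$. Then $w=(\varphi\otimes 1)K_P(1\otimes u)$ is compactly supported, its projection is $\varphi Pu$, and $w$ agrees with $K_P(1\otimes u)$ over $W\times U$. Since $\WFR$ is a sheaf morphism, applying \ref{WFproj} to $w$ controls $\WFR(Pu)$ over $W$, and $\varphi$ never has to lie in $\sheaf{}$.

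Also drop the sentence invoking Corollary \ref{WFCor1} for a cutoff $\chi\notin\sheaf{}$. That corollary requires the multiplier to be of class $\sheaf{}$, so the sentence contradicts itself, as you already suspected.
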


\begin{Cor}
If  $\sheaf{}$ is a microlocal sheaf then the following holds:
\begin{enumerate}[label=\emph{(M\arabic*$^\prime$)},start=5]
    \item\label{WFsmoothing} $P$ be a differential operator with coefficients in $\sheaf[U]{n}$.
    Then
    \begin{equation*}
        \WFR Pu\subseteq\WFR u\qquad \qquad \fa u\in\Distr[U]{n}.
    \end{equation*}
\end{enumerate}
\end{Cor}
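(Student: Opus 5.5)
The plan is to split $P$ into its building blocks, a multiplication by a coefficient and a constant coefficient derivative, and to use that $\WFR$ behaves well under sums. Write
\begin{equation*}
    P(x,D)=\sum_{\abs{\alpha}\leq d} p_\alpha(x)D^\alpha,\qquad p_\alpha\in\sheaf[U]{n}.
\end{equation*}
The sheaf morphism $\WFR$ is only finitely subadditive by \ref{BasicWF0}, where the ``$+$'' there is read as a union. So by induction on the number of terms, $\WFR Pu\subseteq\bigcup_{\alpha}\WFR\bigl(p_\alpha D^\alpha u\bigr)$. It therefore suffices to show two things:
\begin{enumerate}
\item $\WFR(au)\subseteq\WFR u$ for $a\in\sheaf[U]{n}$;
\item $\WFR(D_ju)\subseteq\WFR u$ for $j=1,\dotsc,n$.
\end{enumerate}
Iterating the second gives $\WFR D^\alpha u\subseteq\WFR u$. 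Combining with the first gives $\WFR(p_\alpha D^\alpha u)\subseteq\WFR D^\alpha u\subseteq\WFR u$.

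The first inclusion is exactly Corollary \ref{WFCor1}, so nothing further is needed there.

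For the second, $D_j$ is a constant coefficient differential operator, hence in particular an analytic pseudodifferential operator. So Theorem \ref{AnalyticPseudo} gives $\WFR D_ju\subseteq\WFR u$ for $u\in\E^\prime(U)$. To pass to arbitrary $u\in\Distr[U]{n}$ I would localize, using that $\WFR$ is a sheaf morphism, so the statement is local over $U$. Fix $x_0\in U$ and a cutoff $\chi\in\D(U)$ with $\chi\equiv1$ near $x_0$; one may take $\chi$ real-analytic-free but smooth. On a neighbourhood $W$ of $x_0$ where $\chi\equiv 1$, we have $D_j(\chi u)=D_ju$ and $\chi u=u$. By the sheaf property, $\WFR D_ju\cap S^\ast W=\WFR D_j(\chi u)\cap S^\ast W$, and this is contained in $\WFR(\chi u)\cap S^\ast W=\WFR u\cap S^\ast W$. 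Covering $U$ by such $W$ gives the claim.

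The main obstacle is making this localization honest. Restriction of $\WFR$ must commute with restriction of distributions, which is what ``sheaf morphism'' provides. In addition, $\WFR(\chi u)\cap S^\ast W$ must equal $\WFR u\cap S^\ast W$ even though $\chi$ need not lie in $\sheaf{}$. This again follows purely from the sheaf property, since $\chi u$ and $u$ coincide on $W$, so Corollary \ref{WFCor1} is not needed at this point.

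Alternatively, one could avoid Theorem \ref{AnalyticPseudo} by proving $\WFR D_ju\subseteq\WFR u$ directly. This would write $D_ju$ as the pull-back of $u\otimes\delta^\prime$-type kernels and apply Proposition \ref{WFProp2} or Proposition \ref{WFProp3}, noting that the kernel $D_{x_j}\delta(x-y)$ has analytic wavefront set on the conormal of the diagonal and hence, by \ref{WFComparision}, $\WFR$ there too. I would fall back on this if the citation of analytic pseudodifferential calculus is considered insufficient.
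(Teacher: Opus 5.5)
Your proposal is correct and follows essentially the same route as the paper. Theorem~\ref{AnalyticPseudo} handles derivatives of compactly supported distributions; a cutoff $\chi\equiv 1$ near a point, together with the compatibility of $\WFR$ with restrictions, passes to arbitrary $u\in\Distr[U]{n}$; and Corollary~\ref{WFCor1} handles the coefficients. The only differences are cosmetic: you iterate single derivatives $D_j$ rather than treating $D^\alpha$ at once, and you make explicit the use of \ref{BasicWF0} for the finite sum, which the paper leaves implicit.
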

\begin{proof}
First, for any $\alpha\in\N_0$ we have that
\begin{equation*}
    \WFR D^\alpha u\subseteq\WFR u\qquad u\in\E^\prime(U)
\end{equation*}
by Theorem \ref{AnalyticPseudo}. If $v\in\Distr[U]{n}$, $x\in U$ and
$\varphi\in\D(U)$ such that $\varphi\equiv 1$ near $x$.
Then $D^\alpha (\varphi v)= \varphi D^\alpha v+ \psi_\alpha$ where
$\psi_\alpha$ is a distribution which vanishes in some neighborhood of $x$.
Thus we have that
\begin{equation*}
    \WFR D^\alpha v\subseteq \WFR v.
\end{equation*}
Combining this with Corollary \ref{WFCor1} gives the assertion.
\end{proof}
Since any elliptic analytic pseudodifferential operator has an analytic
parametrix in the same class, see e.g.~\cite{TrevesBook25}, we can directly infer the following result.
\begin{Thm}
Let $\sheaf{}$ be a microlocal sheaf and $A$ be an elliptic analytic pseudodifferential operator in $U$.
Then $\WFR Au=\WFR u$ for all $u\in\E^\prime_n(U)$.
\end{Thm}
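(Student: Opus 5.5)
The plan is to prove the two inclusions $\WFR Au\subseteq\WFR u$ and $\WFR u\subseteq\WFR Au$ separately, using Theorem \ref{AnalyticPseudo} for both together with the existence of an analytic parametrix.

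For the first inclusion, apply Theorem \ref{AnalyticPseudo} directly to $A$. This gives $\WFR Au\subseteq\WFR u$ for every $u\in\E^\prime_n(U)$.

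For the reverse inclusion, let $B$ be an analytic parametrix of $A$, which exists because $A$ is elliptic. We may arrange $B$ to be properly supported, and we write $BA=\Id+R$, where $R$ is an analytically regularizing operator. This means that $R$ is an analytic pseudodifferential operator of order $-\infty$ whose kernel is real-analytic on $U\times U$. Then for $u\in\E^\prime_n(U)$
\begin{equation*}
u=BAu-Ru .
\end{equation*}
Applying \ref{BasicWF0}, we get $\WFR u\subseteq\WFR(BAu)\cup\WFR(Ru)$. We then treat the two terms in turn.

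\emph{The term $BAu$.} The distribution $Au$ need not have compact support. Choose $\chi\in\D(U)$ with $\chi\equiv1$ near $\supp u$ and near the relevant region, and split $BAu=B(\chi Au)+B((1-\chi)Au)$. Since $B$ is properly supported and has an analytic kernel off the diagonal, the second piece is real-analytic near any prescribed compact set once $\chi$ is chosen suitably. By \ref{WFComparision} its $\WFR$ is therefore empty there. For the first piece, Theorem \ref{AnalyticPseudo} gives $\WFR B(\chi Au)\subseteq\WFR(\chi Au)$, and Corollary \ref{WFCor1} together with \ref{AnalyticIncl} gives $\WFR(\chi Au)\subseteq\WFR Au$. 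Since $\WFR$ is a sheaf morphism, we can localize these statements and conclude $\WFR(BAu)\subseteq\WFR Au$.

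\emph{The term $Ru$.} Since $u$ has compact support and the kernel of $R$ is real-analytic, $Ru$ is real-analytic. Hence $\WF_A(Ru)=\emptyset$, and \ref{WFComparision} gives $\WFR(Ru)=\emptyset$. Alternatively, one can apply Proposition \ref{WFProp2}, using that the $\WFR$ of an analytic kernel is empty.

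Combining the two terms yields $\WFR u\subseteq\WFR Au$, and together with the first inclusion this gives equality.

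The main obstacle is the localization in the $BAu$ term: Theorem \ref{AnalyticPseudo} is stated only for compactly supported distributions, while $Au$ is not compactly supported in general. I would handle this as above, using the pseudolocality of $B$, namely that its kernel is analytic off the diagonal. That cutoff term is again controlled by \ref{WFComparision}.
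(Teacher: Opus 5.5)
Your strategy is the paper's: Theorem \ref{AnalyticPseudo} applied to $A$ gives one inclusion, and Theorem \ref{AnalyticPseudo} applied to an analytic parametrix $B$, with \ref{WFComparision} disposing of the analytic remainder, gives the other. The paper says nothing beyond citing the existence of the parametrix. Two justifications in the localization step you add are wrong as written.

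\emph{The cutoff.} Corollary \ref{WFCor1} requires the multiplier to lie in $\sheaf[U]{n}$. A cutoff $\chi\in\D(U)$ is not analytic, so \ref{AnalyticIncl} does not help. If $\sheaf{}$ is quasianalytic, no nonzero element of $\D(U)$ is of class $\sheaf{}$ at all. The global inclusion $\WFR(\chi Au)\subseteq\WFR Au$ is in fact false in general. Take $\sheaf{}=\analytic$ with $\WFR=\WF_A$: outside $\supp u$ the function $Au$ is real-analytic, but $\chi Au$ typically is not where $0<\chi<1$. What is true, by the sheaf property of $\WFR$, is that $\WFR(\chi Au)$ and $\WFR Au$ coincide over the interior of $\{\chi=1\}$. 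That is all you need.

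\emph{Proper support.} You cannot make $B$ properly supported while keeping its kernel real-analytic off the diagonal. For connected $U$, a properly supported kernel vanishes on a nonempty open subset of every component of $(U\times U)\setminus\Delta$. By analytic continuation it would then vanish off the diagonal, so $B$ would be a differential operator, which a parametrix is in general not.

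\emph{The repair.} Use the parametrix identity locally instead. Choose $U_1\Subset U$ with $\supp u\subseteq U_1$, and $\chi\in\D(U)$ with $\chi\equiv1$ near $\overline{U_1}$. Then $B(\chi Au)=u+Ru$ on $U_1$, with $Ru$ real-analytic on $U_1$. Changing $\chi$ alters $B(\chi Au)\vert_{U_1}$ only by a function analytic on $U_1$, because the kernel of $B$ is analytic off the diagonal. Now \ref{BasicWF0}, \ref{WFComparision}, Theorem \ref{AnalyticPseudo} and the sheaf property give
$\WFR u\cap S^\ast U_1\subseteq\WFR(\chi Au)\cap S^\ast U_1=\WFR Au\cap S^\ast U_1$.
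Finally, $\WFR u\subseteq S^\ast U_1$ by \ref{WFSing}, since $\singsupp_{\sheaf{}}u\subseteq\supp u$.

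This version needs neither the $(1-\chi)$ term nor proper support, and it uses the cutoff only where $\chi\equiv1$.
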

Thus in the case of differential operators we obtained the ultradifferentiable microlocal elliptic theorem for analytic differential operators.
For a given differential operator $P=\sum_{\abs{\alpha}\leq d}p_\alpha D^\alpha$
with principal symbol $p_d(x,\xi)=\sum_{\abs{\alpha}=d} p_\alpha \xi^\alpha$
we define the characteristic set of $P$ as
\begin{equation*}
    \Char P=\Set*{(x,\xi)\in S^\ast U\given p_d(x,\xi)=0}.
\end{equation*}
\begin{Thm}\label{MicroEllThm1}
    Let $\sheaf{}$ be a microlocal sheaf. Then we have that:
    \begin{enumerate}[label=\emph{(M\arabic*$^\prime$)},start=6]
        \item\label{AnalEllWF} If $P$ be a differential operator with coefficients in $\analytic_n(U)$
        then
        \begin{equation*}
        \WFR Pu\subseteq\WFR u\subseteq\WFR Pu\cup\Char P \qquad\fa u\in\Distr[U]{n}.
    \end{equation*}
    where $\Char P$ is the characteristic set of $P$.
    \end{enumerate}
\end{Thm}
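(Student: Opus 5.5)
The first inclusion $\WFR Pu\subseteq\WFR u$ is immediate from \ref{WFsmoothing}: real-analytic coefficients lie in $\sheaf[U]{n}$ by \ref{AnalyticIncl}. The content of the statement is the second inclusion. I would prove it by constructing a local analytic microlocal parametrix and applying Theorem \ref{AnalyticPseudo}.

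Fix $(x_0,\xi_0)\in S^\ast U\setminus(\WFR Pu\cup\Char P)$. Since $p_d(x_0,\xi_0)\neq0$ and $p_d$ is real-analytic, there are a neighborhood $W$ of $x_0$ and a closed conic neighborhood $\Gamma$ of $\xi_0$ with $p_d\neq0$ on $W\times\Gamma$. Choose a cutoff $\chi\in\D(U)$ equal to $1$ near $\overline W$, and set $v=\chi u\in\E^\prime(U)$. Then $Pv=\chi Pu+[P,\chi]u$, and the commutator term vanishes near $\overline W$. Because $\WFR$ is a sheaf morphism (restriction compatibility), and by Corollary \ref{WFCor1} together with \ref{WFsmoothing}, we get $(x_0,\xi_0)\notin\WFR Pv$ and $\WFR v\cap S^\ast W=\WFR u\cap S^\ast W$. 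So it suffices to treat compactly supported $u$.

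Next I use the analytic microlocal calculus (e.g.~\cite{TrevesBook25}). There is an analytic pseudodifferential operator $Q$, defined near $x_0$, that is a microlocal parametrix of $P$ in a conic neighborhood of $(x_0,\xi_0)$. That is, $QP=B+R$, where $R$ is analytic regularizing near $x_0$, and $B$ is an analytic pseudodifferential operator whose symbol is elliptic of order $0$ near $(x_0,\xi_0)$ and vanishes to infinite order (in the analytic symbol sense) outside $W\times\Gamma$. Alternatively, one can take an analytic cutoff operator $\Theta$, elliptic at $(x_0,\xi_0)$ and microsupported in $W\times\Gamma$, and write $\Theta=QP+R$.

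From this identity, Theorem \ref{AnalyticPseudo} gives $\WFR QPu\subseteq\WFR Pu$, so $(x_0,\xi_0)\notin\WFR QPu$. The regularizing operator $R$ has a real-analytic kernel near the diagonal. Proposition \ref{WFProp2}, together with \ref{WFComparision} (so that $\WFR K\subseteq\WF_AK=\emptyset$), then gives $(x_0,\xi_0)\notin\WFR Ru$. By \ref{BasicWF0}, $(x_0,\xi_0)\notin\WFR\Theta u$. Finally, the elliptic theorem stated just before Theorem \ref{MicroEllThm1}, used in its microlocal form, yields $(x_0,\xi_0)\notin\WFR u$.

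The main obstacle is this last step. The cited elliptic theorem is stated for operators that are elliptic everywhere, whereas $\Theta$ is only elliptic at $(x_0,\xi_0)$. What I need is the local statement: if $\Theta$ is elliptic at $(x_0,\xi_0)$ and $(x_0,\xi_0)\notin\WFR\Theta u$, then $(x_0,\xi_0)\notin\WFR u$. I would derive this by again building an analytic left parametrix $E$ of $\Theta$ microlocally near $(x_0,\xi_0)$, so that $E\Theta=I-T$ with $T$ analytic-microlocally vanishing there. The remaining point is to show $(x_0,\xi_0)\notin\WFR Tu$ using only Theorem \ref{AnalyticPseudo}, which by itself only gives $\WFR Tu\subseteq\WFR u$. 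To get this, I would use the Fourier integral operator theorem with $\mathfrak R_\Phi$ applied to the pseudodifferential phase $\Phi=\langle x-y,\theta\rangle$, restricted to the cone where the symbol of $T$ is nonzero. This localizes $\WFR Tu$ to the complement of a neighborhood of $(x_0,\xi_0)$, after the usual decomposition of $T$ into a part with symbol supported away from $\Gamma$ and an analytically regularizing part.
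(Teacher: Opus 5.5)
Your proposal follows the paper's route: the paper gives no separate argument, but obtains Theorem~\ref{MicroEllThm1} directly from Theorem~\ref{AnalyticPseudo} and the existence of analytic parametrices (via the elliptic theorem stated just before it), and your explicit treatment of ellipticity holding only at $(x_0,\xi_0)$ fills in a step the paper passes over. Two citations need adjusting: Corollary~\ref{WFCor1} does not apply to the cutoff $\chi$ (it need not lie in $\sheaf{}$, and never does when $\sheaf{}$ is quasianalytic), though the sheaf property alone already gives what you need there, and Proposition~\ref{WFProp2} is stated only for $u\in\sheaf{}$, so for the analytically regularizing term you should use Proposition~\ref{WFProp3} together with \ref{WFComparision}; applied to the analytic wavefront set of the kernel of $T$, the same combination also yields $(x_0,\xi_0)\notin\WFR Tu$ without the detour through the Fourier integral operator theorem.
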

\begin{Cor}
    Let $\sheaf{}$ be a microlocal sheaf and $P$ be an elliptic differential operator
    with coefficients in $\analytic_n(U)$.
    Then $P$ is $\sheaf{}$-hypoelliptic.
\end{Cor}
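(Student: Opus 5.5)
The plan is to derive the corollary from Theorem \ref{MicroEllThm1} together with axiom \ref{WFSing}. Recall that $\sheaf{}$-hypoellipticity means $\singsupp_{\sheaf{}}Pu=\singsupp_{\sheaf{}}u$ for every $u\in\Distr[U]{n}$. By \ref{WFSing} we have $\singsupp_{\sheaf{}}v=\pi(\WFR v)$ for every distribution $v$. So it suffices to prove the microlocal identity $\WFR Pu=\WFR u$ and then apply the projection $\pi$ to both sides.

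Next I use ellipticity. Since $P$ is elliptic with real-analytic coefficients, its principal symbol satisfies $p_d(x,\xi)\neq 0$ for all $x\in U$ and all $\xi\neq 0$. Hence $\Char P=\emptyset$ as a subset of $S^\ast U$. Inserting this into \ref{AnalEllWF} gives
\begin{equation*}
\WFR Pu\subseteq\WFR u\subseteq\WFR Pu\cup\Char P=\WFR Pu,
\end{equation*}
and therefore $\WFR Pu=\WFR u$ for all $u\in\Distr[U]{n}$. Projecting to the base yields $\singsupp_{\sheaf{}}Pu=\pi(\WFR Pu)=\pi(\WFR u)=\singsupp_{\sheaf{}}u$, which is the claim.

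The argument has no real obstacle once Theorem \ref{MicroEllThm1} is granted. The only point to check is that Theorem \ref{MicroEllThm1} applies to arbitrary $u\in\Distr[U]{n}$, not only to compactly supported distributions. That holds as the theorem is stated.

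If one instead wanted to avoid Theorem \ref{MicroEllThm1}, the trivial inclusion $\singsupp_{\sheaf{}}Pu\subseteq\singsupp_{\sheaf{}}u$ would follow from \eqref{PClosed} and \ref{AnalyticIncl}. For the reverse inclusion one would localize with a cutoff $\varphi$ and apply an analytic parametrix $Q$ of $P$ to $\varphi u$, using Theorem \ref{AnalyticPseudo}. In that route, the analytic-parametrix step would be where the work lies.
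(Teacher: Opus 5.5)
Your proposal is correct and follows the paper's route. The paper gives no separate proof because the corollary is immediate from Theorem \ref{MicroEllThm1}: ellipticity gives $\Char P=\emptyset$, hence $\WFR Pu=\WFR u$, and projecting to the base via \ref{WFSing} yields $\singsupp_{\sheaf{}}Pu=\singsupp_{\sheaf{}}u$.
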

We can also follow the arguments in \cite[Section 4]{fuerdoes2025} to generalize
the famous characterization of hypoellipticity of analytic differential operators of principal type
by Treves \cite{Treves71}, cf.~also \cite{TrevesBook25}.
\begin{Thm}
    Let $P$ be an analytic differential operator of principal type and $\sheaf{}$
    be a microlocal sheaf. Then the following statements are equivalent.
    \begin{enumerate}[label=(\roman*),itemsep=0.1em]
        \item $P$ is (smooth) hypoelliptic.
        \item $P$ is $\sheaf{}$-hypoelliptic.
        \item $P$ is analytic hypoelliptic.
    \end{enumerate}
\end{Thm}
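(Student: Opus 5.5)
The plan is to prove the cycle (iii)$\Rightarrow$(ii)$\Rightarrow$(i)$\Rightarrow$(iii). The last implication, smooth hypoellipticity implies analytic hypoellipticity for analytic operators of principal type, is part of Treves' characterization \cite{Treves71} (cf.~\cite{TrevesBook25}). There both properties are shown to be equivalent to the condition that $\imag(z p_d)$ does not change sign from $-$ to $+$ along the oriented null bicharacteristics of $\real(z p_d)$, together with the absence of one-dimensional bicharacteristics on which $p_d$ vanishes identically. So it suffices to show that analytic hypoellipticity implies $\sheaf{}$-hypoellipticity, and that $\sheaf{}$-hypoellipticity implies the Treves condition, which is then equivalent to smooth hypoellipticity.

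\emph{(iii)$\Rightarrow$(ii).} Analytic hypoellipticity in Treves' proof is established microlocally. Near a characteristic point, one conjugates $P$ by an elliptic analytic Fourier integral operator associated with a real-analytic canonical transformation, reducing to a normal form $D_{x_1}+ i f(x,D_{x'})$ modulo elliptic analytic pseudodifferential factors. For this normal form one constructs an analytic parametrix, or at least analytic pseudodifferential or Fourier integral operators $E$, with $EP=\Id+R$ microlocally, where $R$ has real-analytic kernel. The plan is to feed exactly these operators into the $\sheaf{}$-calculus of this section.
\begin{itemize}
\item The theorem on analytic Fourier integral operators above shows that the conjugation preserves $\WFR$ up to the canonical relation.
\item Theorem~\ref{AnalyticPseudo} and the elliptic theorem give invariance under the analytic pseudodifferential factors.
\item \ref{WFComparision} shows that the analytic smoothing remainders contribute nothing, since $\WFR\subseteq\WF_A$.
\end{itemize}
It follows that $\WFR u\subseteq \WFR Pu$ microlocally, and then \ref{WFSing} gives $\singsupp_{\sheaf{}}u=\singsupp_{\sheaf{}}Pu$. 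This is the argument of \cite[Section 4]{fuerdoes2025}, and only \ref{BasicWF0}--\ref{WFTensor} together with Propositions~\ref{WFProp1}--\ref{WFProp3} enter.

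\emph{(ii)$\Rightarrow$(i).} Assume the Treves condition fails at some point. Treves' non-hypoellipticity proof constructs, in the normal form, a distribution $u$ with $Pu$ real-analytic near a point $x_0$ but $u$ not smooth at $x_0$. It comes as a superposition of oscillatory exponentials $e^{i\lambda\phi}a$ solving $P(e^{i\lambda\phi}a)=0$ in a neighborhood, with $\imag\phi\ge 0$ and degenerating along a bicharacteristic segment, glued by a cutoff. The remainder is exponentially small on the relevant set, so one can arrange $Pu\in\analytic$ near $x_0$. Then $Pu\in\sheaf{}$ near $x_0$ by \ref{AnalyticIncl}, while $u\notin\E$ there, hence $u\notin\sheaf{}$ there by \ref{AnalyticIncl} and \ref{WFComparision} ($\WF u\subseteq\WFR u$). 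This contradicts $\sheaf{}$-hypoellipticity. The transfer back through the Fourier integral operator conjugation again uses the theorem on analytic Fourier integral operators.

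The main obstacle is this step. One has to check that the counterexample can be made with $Pu$ actually real-analytic, not merely smooth, near the point. In the analytic category this is exactly what Treves does, by constructing solutions of $Pu=0$ near the bicharacteristic. If that route is awkward, I would instead use that a failure of analytic hypoellipticity is already produced by a null solution $Pu=0$ near $x_0$ with $u$ singular. That gives a counterexample for every class between $\analytic$ and $\E$ simultaneously, and the semiregularity axioms are only needed to ensure $0\in\sheaf{}$.
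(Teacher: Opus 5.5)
The paper gives no argument of its own for this theorem; it only refers to Treves and to \cite[Section 4]{fuerdoes2025}. Your (iii)$\Rightarrow$(ii) and (i)$\Rightarrow$(iii) follow that same route, so the problem is (ii)$\Rightarrow$(i).

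\textbf{The gap in (ii)$\Rightarrow$(i).} To refute $\mathcal{R}$-hypoellipticity you need one distribution $u$ and a point $x_0$ with $Pu\in\mathcal{R}$ near $x_0$ but $u\notin\mathcal{R}$ near $x_0$. Since all you know is $\mathcal{C}^\omega\subseteq\mathcal{R}\subseteq\mathcal{E}$, this means $Pu$ real-analytic (or zero) near $x_0$ and $u$ not $\mathcal{C}^\infty$ at $x_0$.

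You attribute such a $u$ to Treves. Necessity results of this kind are, however, usually proved by contradicting a closed-graph a priori estimate with a family of approximate solutions, which yields no single $u$. A closed-graph argument inside $\mathcal{R}$ is also unavailable, since $\mathcal{R}(U)$ carries no topology.

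Your direct construction leaves three essential points open:
\begin{enumerate}
\item that the superposition $\int e^{i\lambda\phi}a\,d\lambda$ is really non-smooth at $x_0$;
\item that $Pu$ is analytic near $x_0$, which needs the remainders to extend holomorphically to a fixed complex neighbourhood with $O(e^{-c\lambda})$ bounds beating $\lvert e^{i\lambda\phi(z)}\rvert\le e^{C\lambda\lvert\imag z\rvert}$, not just exponential smallness on the reals;
\item that the transfer back through the normal-form conjugation, which is only microlocal near $(x_0,\xi_0)$, still gives $x_0\notin\singsupp_{\mathcal{R}}Pu$, i.e.\ control in every codirection over $x_0$.
\end{enumerate}
Your fallback is circular. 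A failure of analytic hypoellipticity only produces, via Cauchy--Kovalevskaya, a null solution that is not analytic, and such a solution may well lie in $\mathcal{R}$. A null solution that is not $\mathcal{C}^\infty$ is exactly the statement you are missing.

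\textbf{A possible repair.} This avoids any topology on $\mathcal{R}$.
\begin{enumerate}
\item By (ii) and $\mathcal{R}\subseteq\mathcal{E}$, every $u\in L^2(W)$ with $Pu=0$ is smooth, for $W$ a small neighbourhood of $x_0$.
\item The closed graph theorem applied to $\{u\in L^2(W): Pu=0\}\hookrightarrow\mathcal{C}^\infty(W)$ then gives $\sup_K\lvert\nabla u\rvert\le C\lVert u\rVert_{L^2(W)}$ for all such $u$.
\item If the Treves condition fails, take the approximate solutions $u_\lambda=e^{i\lambda\phi}a_\lambda$ of the necessity argument in analytic form. This means $\imag\phi\ge 0$, $\imag\phi(x_0)=0$, $d\phi(x_0)\neq 0$, $a_\lambda(x_0)\neq 0$, and $Pu_\lambda=O(e^{-c\lambda})$ on a fixed complex neighbourhood.
\item Correct them to exact null solutions with the Cauchy--Kovalevskaya theorem. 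For linear operators it has a uniform existence domain and bounds the solution by the data.
\item Let $\lambda\to\infty$. The corrected solutions stay bounded in $L^2$ while their gradients at $x_0$ grow like $\lambda$, violating the estimate.
\end{enumerate}
This uses only $\mathcal{C}^\omega\subseteq\mathcal{R}\subseteq\mathcal{E}$. The analytic, exponentially accurate approximate solutions are still an input you must take from, or verify in, Treves' work.
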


\begin{Thm}\label{WFConv}
    Let $\sheaf{}$ be a microlocal sheaf, $k\in\Distr[{\R[n]}]{n}$ and 
    $u\in\E^\prime_n(\R[n])$. Then
    \begin{equation*}
        \WFR (k\ast u)\subseteq\Set*{(x+y,\xi)\in S^\ast\R[n]\given (x,\xi)\in \WFR k\text{ and }(y,\xi)\in\WFR u}.
    \end{equation*}
\end{Thm}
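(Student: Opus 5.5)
We will follow Hörmander's proof of the corresponding statement for the smooth wavefront set (cf.~\cite[Theorem 8.2.12 ff.]{HoeBook1}). The idea is to write the convolution as a pull-back of a tensor product and integrate out one variable. The tools are the axioms \ref{WFanalMap}, \ref{WFTensor} and \ref{WFproj}, or equivalently Proposition \ref{WFProp3}.

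We realize $k\ast u$ as an operator with kernel. Let $\Phi:\R[n]\times\R[n]\to\R[n]$, $\Phi(x,y)=x-y$. This map is linear, hence analytic, and a submersion: ${}^t\Phi'(x,y)\xi=(\xi,-\xi)$ never vanishes for $\xi\neq 0$. So $N_\Phi=\emptyset$, and by \ref{WFanalMap} the pull-back $K=\Phi^\ast k\in\Distr[{\R[2n]}]{2n}$ is well defined with
\begin{equation*}
\WFR K\subseteq\Set*{\Bigl(x,y,\tfrac{\xi}{\sqrt2\abs{\xi}},-\tfrac{\xi}{\sqrt2\abs{\xi}}\Bigr)\given (x-y,\xi)\in\WFR k}.
\end{equation*}
Then $k\ast u=\mathsf{K}u$, where $\mathsf{K}$ is the operator with kernel $K$.

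Next we apply Proposition \ref{WFProp3} with $U=V=\R[n]$. Every point of $\WFR K$ has both cotangent components nonzero. Hence $\WF_{\sheaf{}}(K)_U=\emptyset$ and $\WF'_{\sheaf{}}(K)_V=\emptyset$, so the hypothesis $\WFR u\cap\WF'_{\sheaf{}}(K)_V=\emptyset$ holds trivially. The proposition then gives $\WFR(k\ast u)\subseteq \WF'_{\sheaf{}}(K)\circ\WFR u$. Unwinding the composition, $(z,\xi)$ belongs to this set only if there is $(y,\eta)\in\WFR u$ with $(z,y,\xi,-\eta)\in\WFR K$ after normalization. This forces $\eta\parallel\xi$ with the same orientation and $(z-y,\xi)\in\WFR k$. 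Writing $z=x+y$ with $x=z-y$ gives exactly the set in the statement.

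The main obstacle is that Proposition \ref{WFProp3} and \ref{WFproj} are formulated for $\E'$ or properly supported situations, while $k$ is an arbitrary distribution. Since $u$ has compact support, we would handle this as follows. Fix a bounded open set $\Omega$. Only the values of $k$ on the compact set $\overline{\Omega}-\supp u$ affect $(k\ast u)\vert_\Omega$. So we replace $k$ by $\chi k$ with $\chi\in\D(\R[n])$ and $\chi\equiv1$ near that set. Corollary \ref{WFCor1}, together with the fact that $\chi\in\analytic$ is not needed (only $\chi$ smooth with $\chi\equiv1$ locally and locality of $\WFR$), gives $\WFR(\chi k)\subseteq\WFR k$. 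For $\Omega$ ranging over an exhaustion, the sheaf property of $\WFR$ then yields the global inclusion. Once the kernel is cut off in this way, $\supp K\cap(\R[n]\times\supp u)$ is compact. This justifies applying \ref{WFproj} in the proof of Proposition \ref{WFProp3} and checking the normalization factors in the composition.
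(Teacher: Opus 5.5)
Your proof is correct and is the paper's argument. You write $k\ast u=\mathsf{K}u$ with $K=\rho^\ast k$, $\rho(x,y)=x-y$, bound $\WFR K$ by \ref{WFanalMap}, and conclude with Proposition \ref{WFProp3}. You also make explicit that $\WF_{\sheaf{}}(K)_U=\WF^\prime_{\sheaf{}}(K)_V=\emptyset$, which the paper leaves implicit. The paper's extra step proving equality for $\WFR K$ via the right inverses $f_c$ is not needed for the inclusion. Since Proposition \ref{WFProp3} is stated for arbitrary $K$, the paper does no cut-off at all.

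If you keep your cut-off, one claim must be corrected: $\WFR(\chi k)\subseteq\WFR k$ is false in general. For quasianalytic $\sheaf{}$, $k=1$ and $0\neq\chi\in\D(\R[n])$, one has $\WFR k=\emptyset$ but $\WFR(\chi k)\neq\emptyset$ by \ref{WFSing}. Corollary \ref{WFCor1} does not apply here, since it needs $\chi$ to be of class $\sheaf{}$. What locality actually gives is $\WFR(\chi k)\cap S^\ast W=\WFR k\cap S^\ast W$ on an open $W\supseteq\overline{\Omega}-\supp u$ with $\chi\vert_W\equiv 1$. That is enough: $(y,\xi)\in\WFR u$ forces $y\in\supp u$, so over $\Omega$ only points $(x,\xi)\in\WFR(\chi k)$ with $x\in\Omega-\supp u$ contribute.
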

\begin{proof}
By definition of the convolution of distributions the map
\begin{equation*}
    \E_n^\prime(\R[n])\ni u\longmapsto k\ast u\in\Distr[{\R[n]}]{n}
\end{equation*}
possesses the kernel $K=\rho^\ast k\in\Distr[{\R[2n]}]{2n}$ where
$\rho: \R[n]\times\R[n]\rightarrow \R[n]$ is given by $\rho(x,y)=x-y$.  Then
\begin{equation*}
    \prescript{t}{}{\rho^\prime}(x,y) \xi =(\xi,-\xi)
\end{equation*}
and \ref{WFanalMap} implies that
\begin{equation*}
    \WFR K\subseteq\Set*{\left(x,y,\frac{\xi}{\sqrt{2}},-\frac{\xi}{\sqrt{2}}\right)\in S^{\ast}\R[2n]\given (x-y,\xi)\in\WFR k}
\end{equation*}
On the other hand, for any constant $c\in\R[n]$ the map $f_c(x)=(x+c,c)$ is a right
inverse of $\rho$ and $\prescript{t}{}{f_c^\prime}(x)(\xi,\eta)=\xi$ 
and thus $\WFR K\cap N_{f_c}=\emptyset$. Thence $f_c^\ast K=f_c^\ast (\rho^\ast k)= (\rho\circ f_c)^\ast k= k$ and by \ref{WFanalMap}
\begin{equation*}
    \WFR k\subseteq \Set*{(x,\xi)\in S^\ast \R[n]\given \left(x+c,c,\frac{\xi}{\sqrt{2}},-\frac{\xi}{\sqrt{2}}\right)\in\WFR K}.
\end{equation*}
It follows that
\begin{equation}
    \WFR K=\Set*{\left(x,y,\frac{\xi}{\sqrt{2}},-\frac{\xi}{\sqrt{2}}\right)\in S^{\ast}\R[2n]\given (x-y,\xi)\in\WFR k}.
\end{equation}
Applying Proposition \ref{WFProp3} finishes the proof.
\end{proof}

\begin{Cor}\label{WFCor3}
Let $\sheaf{}$ be a microlocal sheaf. Then the following statement holds:
\begin{enumerate}[label=\emph{(S\arabic*)},start=5]
    \item\label{Convolution} Let $k\in\Distr[{\R[n]}]{n}$ and $u\in\E_n^\prime(\R[n])$. Then
    \begin{equation*}
        \singsupp_{\sheaf{}} (k\ast u)\subseteq \singsupp_{\sheaf{}} k+\singsupp_{\sheaf{}}u.
    \end{equation*}
    In particular, if $k\in\sheaf[{\R[n]}]{n}$ then $k\ast u\in\sheaf[{\R[n]}]{n}$.
    On the other hand, if $u\in\sheaf[{\R[n]}]{n}\cap\E_n^\prime(\R[n])$ then
    $k\ast u\in\sheaf[{\R[n]}]{n}$ for any $k\in\Distr[{\R[n]}]{n}$,
\end{enumerate}
\end{Cor}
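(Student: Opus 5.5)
The plan is to deduce \ref{Convolution} from Theorem \ref{WFConv} by projecting to the base with \ref{WFSing}. Let $x_0\in\R[n]$ with $x_0\notin\singsupp_{\sheaf{}}k+\singsupp_{\sheaf{}}u$. By \ref{WFSing} it suffices to show that no covector $(x_0,\xi)$ lies in $\WFR(k\ast u)$; since $\WFR(k\ast u)$ is closed, this gives a neighbourhood of $x_0$ in which $k\ast u$ is of class $\sheaf{}$.

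Theorem \ref{WFConv} shows that any $(x_0,\xi)\in\WFR(k\ast u)$ must have the form $x_0=x+y$ with $(x,\xi)\in\WFR k$ and $(y,\xi)\in\WFR u$. Applying \ref{WFSing} to each factor, $x=\pi(x,\xi)\in\singsupp_{\sheaf{}}k$ and $y\in\singsupp_{\sheaf{}}u$. Hence
\begin{equation*}
\pi\bigl(\WFR(k\ast u)\bigr)\subseteq\singsupp_{\sheaf{}}k+\singsupp_{\sheaf{}}u ,
\end{equation*}
and the left-hand side equals $\singsupp_{\sheaf{}}(k\ast u)$ by \ref{WFSing}. This is the main inclusion.

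The two special cases follow at once. If $k\in\sheaf[{\R[n]}]{n}$, then $\singsupp_{\sheaf{}}k=\emptyset$, so the sum of the singular supports is empty. Therefore $\singsupp_{\sheaf{}}(k\ast u)=\emptyset$, which means $k\ast u$ is locally of class $\sheaf{}$. Because $\sheaf{n}$ is a sheaf, local membership glues to $k\ast u\in\sheaf[{\R[n]}]{n}$. The case $u\in\sheaf[{\R[n]}]{n}\cap\E^\prime_n(\R[n])$ is handled the same way, using $\singsupp_{\sheaf{}}u=\emptyset$.

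The only delicate point is whether Theorem \ref{WFConv} itself gives exactly this base projection. Its proof goes through Proposition \ref{WFProp3}, which also produces a term $\WF_{\sheaf{}}(K)_U$. That term vanishes here because every element of $\WFR K$ has the form $(x,y,\xi/\sqrt2,-\xi/\sqrt2)$ with $\xi\neq0$, so it has no covectors of the form $(x,y,\xi,0)$. The same fact shows $\WF^\prime_{\sheaf{}}(K)_V=\emptyset$, so the hypothesis of Proposition \ref{WFProp3} holds automatically. I therefore take Theorem \ref{WFConv} as stated, and the corollary reduces to the projection argument above.
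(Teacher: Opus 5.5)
Your proof is correct and follows the paper's route: the main inclusion comes from projecting Theorem \ref{WFConv} to the base via \ref{WFSing}, and each special case follows because the relevant $\sheaf{}$-singular support is empty. The paper phrases the special cases through $\WFR K=\emptyset$ (respectively $\WFR u=\emptyset$) inside the proof of Theorem \ref{WFConv}, which is the same observation; your explicit appeal to the sheaf property for gluing is a small clarification the paper leaves implicit.
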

\begin{proof}
  The first statement is just a combination of Theorem \ref{WFConv} and 
  \ref{WFSing}.
  Moreover, if $k\in\sheaf[{\R[n]}]{n}$ then $\WFR K=\emptyset$ by
  the proof of Theorem \ref{WFConv}
  and therefore $k\ast u\in\sheaf[{\R[n]}]{n}$  for all $u\in\Distr[{\R[n]}]{n}$.
  On the other hand if $u\in\sheaf[{\R[n]}]{n}\cap \D(\R[n])$ then
  $\WFR (k\ast u)=\emptyset$ for all $k\in\Distr[{\R[n]}]{n}$ according
  to the proof of Theorem \ref{WFConv}.
\end{proof}

\begin{Thm}\label{ThmCPDO1}
    Let $\sheaf{}$ be a semiregular such that \ref{Convolution} holds and $P$ be
    a differential operator with constant coefficients.
    Then the following statements are equivalent:
    \begin{enumerate}[label=(\roman*),itemsep=0.1em]
        \item $P$ is $\sheaf{}$-hypoelliptic in any open set $U\subseteq\R[n]$.
        \item For every open set $U\subseteq\R[n]$ we have $\Set*{u\in\Distr[U]{n}\given Pu=0}\subseteq\sheaf[U]{n}$.
        \item All fundamental solutions $E$ of $P$ satisfy $\singsupp_{\sheaf{}}E=\{0\}$.
        \item There is a fundamental solution $E$ of $P$ such that 
        $\singsupp_{\sheaf{}} E=\{0\}$.
    \end{enumerate}
    If $\sheaf{}$ is actually a microlocal sheaf then any of the above statements
    are equivalent to
    \begin{enumerate}[label=(\roman*),resume]
        \item $P$ is $\sheaf{}$-microhypoelliptic in any open set $U\subseteq\R[n]$, i.e.~$\WFR Pu=\WFR u$
        for all $u\in\Distr[U]{n}$.
    \end{enumerate}
\end{Thm}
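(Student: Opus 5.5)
We follow the classical proof of Hörmander's theorem on hypoellipticity of constant coefficient operators, cf.~\cite[Theorem 4.4.3]{HoeBook1}, and use only \ref{Convolution}, closedness under differential operators with constant coefficients (Remark \ref{SemiRem}) and the sheaf property. The cycle of implications is (i)$\Rightarrow$(ii)$\Rightarrow$(iii)$\Rightarrow$(iv)$\Rightarrow$(i). The step (iii)$\Rightarrow$(iv) holds because every nonzero $P$ has a fundamental solution (Malgrange--Ehrenpreis).

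For (i)$\Rightarrow$(ii): if $Pu=0$ then $\singsupp_{\sheaf{}}u=\singsupp_{\sheaf{}}Pu=\emptyset$. Since $\sheaf{n}$ is a sheaf, $u\in\sheaf[U]{n}$.

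For (ii)$\Rightarrow$(iii): let $E$ be a fundamental solution. On $U=\R[n]\setminus\{0\}$ we have $PE=\delta=0$, so $E\vert_U\in\sheaf[U]{n}$ and hence $\singsupp_{\sheaf{}}E\subseteq\{0\}$. If $0\notin\singsupp_{\sheaf{}}E$, then $E\in\sheaf[{\R[n]}]{n}\subseteq\E_n(\R[n])$, and $\delta=PE$ would be smooth, which is absurd.

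For (iv)$\Rightarrow$(i), the main step, we first note that $\singsupp_{\sheaf{}}Pu\subseteq\singsupp_{\sheaf{}}u$ always holds by \eqref{PClosed}. For the reverse inclusion, let $x_0\notin\singsupp_{\sheaf{}}Pu$ and choose $\chi\in\D(U)$ with $\chi\equiv1$ on a neighborhood $W$ of $x_0$. Then
\begin{equation*}
\chi u=E\ast P(\chi u)=E\ast(\chi Pu)+E\ast w,
\end{equation*}
where $w=P(\chi u)-\chi Pu\in\E^\prime_n$ is supported in $\supp\nabla\chi$, so $w$ vanishes on $W$. By \ref{Convolution},
\begin{equation*}
\singsupp_{\sheaf{}}(E\ast w)\subseteq\{0\}+\singsupp_{\sheaf{}}w\subseteq\supp w,
\end{equation*}
which does not meet $W$. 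For the other term, shrink the support of $\chi$ so that $\chi Pu$ is of class $\sheaf{}$ near $\supp\chi$; this is possible because $x_0\notin\singsupp_{\sheaf{}}Pu$. Then $\singsupp_{\sheaf{}}(\chi Pu)$ is empty, since multiplication of a local section by $\chi$ preserves membership in $\sheaf{}$ away from the points where $\chi$ fails to be of class $\sheaf{}$. The expected obstacle is that $\chi$ need not be of class $\sheaf{}$, especially in the quasianalytic case. To avoid this, we instead split $\chi Pu$ near $x_0$: take two nested cut-offs $\chi_1,\chi$, with $Pu\in\sheaf{}$ on a neighborhood $V\supseteq\supp\chi$, and note that $\chi Pu$ is of class $\sheaf{}$ on the open set where $\chi\equiv1$. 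So $\singsupp_{\sheaf{}}(\chi Pu)\subseteq\supp\nabla\chi$, which avoids a smaller neighborhood $W^\prime$ of $x_0$. Then \ref{Convolution} gives $\singsupp_{\sheaf{}}(E\ast\chi Pu)\cap W^\prime=\emptyset$. Hence $\chi u=u$ is of class $\sheaf{}$ near $x_0$.

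Finally, suppose $\sheaf{}$ is microlocal. Then (v)$\Rightarrow$(i) follows from \ref{WFSing} by projecting with $\pi$. For (i)$\Rightarrow$(v), the inclusion $\WFR Pu\subseteq\WFR u$ is \ref{WFsmoothing}. For the reverse inclusion we repeat the argument above microlocally: $\WFR(\chi u)\subseteq\WFR(E\ast\chi Pu)\cup\WFR(E\ast w)$ by \ref{BasicWF0}. Theorem \ref{WFConv}, with $\WFR E\subseteq\{0\}\times S^{n-1}$ by (iv), gives $\WFR(E\ast v)\subseteq\WFR v$ for $v\in\E^\prime_n$. This yields $\WFR u\subseteq\WFR Pu$ over $W$, where we use Corollary \ref{WFCor1} to control $\chi Pu$ over $\{\chi=1\}$ and the fact that $w$ vanishes there.
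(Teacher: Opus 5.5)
Your proof is correct and follows essentially the same route as the paper: the cycle (i)$\Rightarrow$(ii)$\Rightarrow$(iii)$\Rightarrow$(iv)$\Rightarrow$(i) via the identity $\chi u=E\ast P(\chi u)$ and \ref{Convolution}, and Theorem \ref{WFConv} together with \ref{WFSing} for (v). You also supply details the paper leaves implicit: that $0\in\singsupp_{\sheaf{}}E$, the existence of a fundamental solution, and the easy inclusions $\singsupp_{\sheaf{}}Pu\subseteq\singsupp_{\sheaf{}}u$ and $\WFR Pu\subseteq\WFR u$.

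As minor clean-up, you can drop the splitting $P(\chi u)=\chi Pu+w$, together with the abandoned first claim and the unused $\chi_1$, since $P(\chi u)$ already agrees with $Pu$ near $x_0$. Over $\{\chi=1\}$ you should appeal to the locality of $\WFR$ rather than to Corollary \ref{WFCor1}, which needs the multiplier to be of class $\sheaf{}$.
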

\begin{proof}
    It is clear that (i) implies (ii). Using (ii) for $U=\R[n]\setminus\{0\}$ gives (iii) which in turn trivially includes (iv). 
    In order to show that (iv) implies (i) let $x\in U$ be arbitrary and assume that
    $Pu$ is of class $\sheaf{}$ near $x$, i.e~there is a neighborhood $V$ of $x$
    such that $(Pu)\vert_V\in\sheaf[V]{n}$. Let $W\Subset V$ be another neighborhood of $x$ and $\psi\in\D(V)$ such that $\psi\vert_W\equiv 1$.
    Since $E$ is a fundamental solution of $P$ we conclude that
    \begin{equation*}
        \psi u=\delta\ast(\psi u)=PE\ast (\psi u)=E\ast P(\psi u).
    \end{equation*}
    Using \emph{\ref{Convolution}} we conclude that $x\notin\singsupp_{\sheaf{}} u$.

    If $\sheaf{}$ is a microlocal sheaf then $(v)\Rightarrow (i)$ by \ref{WFSing}.
    On the other hand, Theorem \ref{WFConv} shows that (iv) implies (v):
    If $(x,\xi)\notin \WFR Pu$ for $u\in\Distr[U]{n}$ and $\varphi\in\D(U)$ such that $\varphi= 1$ near $x$ then
    we have as before that $\varphi u=E\ast P(\varphi u)$.
    According to Theorem \ref{WFConv} $(x,\xi)\notin\WFR \varphi u$,
    which proves the assertion.
\end{proof}
We may refer to a differential operator with constant coefficients as
$\sheaf{}$-hypoelliptic if one of the conditions in Theorem \ref{ThmCPDO1} is
satisfied.
It is a a classical result that a differential operator with constant coefficients is
analytic hypoelliptic if and only if the operator is elliptic \cite{Petrowsky1939}
cf.~also \cite{RodinoBook}. 
The next result extends this theorem to quasianalytic semiregular sheaves,
cf.~\cite[Corollary A.6]{fuerdoes2025}.

\begin{Cor}
    Let $\sheaf{}$ be a semiregular quasianalytic sheaf satisfying \emph{\ref{Convolution}} and $P$
    be a differential operator with constant coefficients.
    Then $P$ is $\sheaf{}$-hypoelliptic if and only if $P$ is elliptic.
\end{Cor}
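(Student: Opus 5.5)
Both directions will be handled through Theorem~\ref{ThmCPDO1}, which lets us read $\sheaf{}$-hypoellipticity of a constant coefficient operator off its fundamental solutions or its null solutions.

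\emph{Elliptic implies $\sheaf{}$-hypoelliptic.} Let $P$ be elliptic with constant coefficients. By the classical theorem of Petrowsky, $P$ is analytic hypoelliptic, and every fundamental solution $E$ is real-analytic on $\R[n]\setminus\{0\}$. By \ref{AnalyticIncl} we have $\analytic\subseteq\sheaf{}$, so $E$ is of class $\sheaf{}$ on $\R[n]\setminus\{0\}$. Since $\delta=PE$ is not smooth at the origin, $E$ cannot be $\sheaf{}$ near $0$. Hence $\singsupp_{\sheaf{}}E=\{0\}$, and condition (iv) of Theorem~\ref{ThmCPDO1} shows that $P$ is $\sheaf{}$-hypoelliptic. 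Quasianalyticity is not needed in this direction.

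\emph{$\sheaf{}$-hypoelliptic implies elliptic.} Suppose $P$ is $\sheaf{}$-hypoelliptic but not elliptic. Then there is $\xi_0\in\R[n]\setminus\{0\}$ with $p_d(\xi_0)=0$. We build a null solution of $P$ that contradicts condition (ii) of Theorem~\ref{ThmCPDO1}, namely $\{Pu=0\}\subseteq\sheaf{}$. The approach follows the classical (Petrowsky/Hörmander) construction, adapted to show that the solution is not of class $\sheaf{}$.

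First, we use the nonelliptic direction to find solutions of $Pu=0$ supported on one side of a hyperplane. After a linear change of coordinates, which we may use because the solution space of a constant coefficient equation transforms well and \ref{Comp1} keeps $\sheaf{}$ invariant under linear maps, the hyperplane $\{x_1=0\}$ becomes characteristic. For a characteristic hyperplane there exist distributional (even smooth) solutions $u$ of $Pu=0$ on $\R[n]$ with $u=0$ for $x_1<0$ but $u\not\equiv0$ near $0$. These come from Holmgren/uniqueness failure: by Hörmander's theorem on uniqueness in the Cauchy problem (\cite[Theorem 8.6.7]{HoeBook1}), uniqueness across a noncharacteristic-free hyperplane fails precisely for characteristic ones. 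Concretely, one can take superpositions $\int e^{i\langle x,\zeta(t)\rangle}\,a(t)\,dt$ along a zero curve $\zeta(t)$ of the complex polynomial $p(\zeta)$ with $\operatorname{Im}\zeta(t)$ tending to $\infty$ in the $x_1$ direction. If $u$ were in $\sheaf[U]{n}$ for a neighborhood $U$ of $0$, then it would vanish on the open set $\{x_1<0\}\cap U$ and hence have all derivatives zero at boundary points. Quasianalyticity of $\sheaf{}$ (via Lemmas~\ref{QuasiLemma1} and \ref{QuasiLemma2}) should then force $u\equiv0$ near $0$, which is a contradiction.

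The main obstacle is this last step. The paper's quasianalyticity is only defined as $\sheaf[U]{n}\cap\D(U)=\{0\}$, which is not immediately the same as the identity-theorem property ``vanishing on an open subset forces vanishing on a connected set''. I would try to close the gap by multiplying $u$ with a cutoff only in the directions where $u$ is already compactly supported. An alternative is to reduce via restriction (U3) to a one-variable function $t\mapsto u(t,x')$, where one needs a compactly supported element. Failing that, I would use \ref{DivCoordinate} repeatedly to show that flatness on a hyperplane combined with quasianalyticity gives vanishing, as in \cite[Corollary A.6]{fuerdoes2025}, which is the reference the statement cites.
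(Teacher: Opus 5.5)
Your overall route is the paper's. For elliptic $P$ you use analytic hypoellipticity, \ref{AnalyticIncl} and Theorem~\ref{ThmCPDO1}; going through (iv) rather than the paper's (ii) is equally fine. For non-elliptic $P$ you take from \cite[Theorem 8.6.7]{HoeBook1} a smooth $u\not\equiv 0$ with $Pu=0$ vanishing on an open half-space bounded by a characteristic hyperplane, and aim to contradict (ii).

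The genuine gap is the step you flag yourself: you never show that such a $u$ cannot be of class $\mathcal{R}$. The paper handles this with a bare appeal to quasianalyticity, so your worry is legitimate, but none of your three suggestions closes it as written.
\begin{itemize}
\item A cutoff is not available. A nonzero compactly supported element of $\mathcal{R}$ in any group of variables is exactly what quasianalyticity forbids, and a merely smooth cutoff destroys membership in $\mathcal{R}$.
\item Repeated use of \ref{DivCoordinate} only yields factorizations $u=x_1^k g_k$ with $g_k$ of class $\mathcal{R}$. Nothing follows from these without quantitative control.
\item Restricting to one variable is the right start, but you still have to manufacture a compactly supported function.
\end{itemize}

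The missing idea is a reflection trick. It uses only that $\mathcal{R}_1(\mathbb{R})$ is an algebra, together with \ref{Comp1}.
\begin{itemize}
\item Take $U=\mathbb{R}^n$ in (ii). Suppose $u\in\mathcal{R}_n(\mathbb{R}^n)$, $u=0$ on $\{x_1<0\}$ (hence on $\{x_1\le 0\}$ by continuity) and $u(x_0)\neq 0$. Then $c:=\langle x_0,e_1\rangle>0$.
\item Apply \ref{Comp1} to the analytic map $t\mapsto x_0+te_1$. The function $f(t)=u(x_0+te_1)$ belongs to $\mathcal{R}_1(\mathbb{R})$, with $f(0)\neq 0$ and $f(t)=0$ for $t\le -c$.
\item Apply \ref{Comp1} once more to $t\mapsto -t$ and multiply:
\begin{equation*}
h(t)=f(t)f(-t)\in\mathcal{R}_1(\mathbb{R}),\qquad \supp h\subseteq[-c,c],\qquad h(0)=f(0)^2\neq 0.
\end{equation*}
\item So $h$ is a nonzero element of $\mathcal{R}_1(\mathbb{R})\cap\mathcal{D}(\mathbb{R})$. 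This contradicts the quasianalyticity of $\mathcal{R}_1$ (Lemma~\ref{QuasiLemma2}).
\end{itemize}
Hence $u\notin\mathcal{R}_n(\mathbb{R}^n)$, and (ii) of Theorem~\ref{ThmCPDO1} fails. Note that the reflection genuinely needs the semiregularity axiom \ref{Comp1}, since \ref{LinearInv} only supplies orientation-preserving affine maps.
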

\begin{proof}
If $P$ is elliptic then $P$ is analytic hypoelliptic, see
e.g.~\cite{RodinoBook},
and therefore $\Set{u\in\Distr[\Omega]{n}\given Pu=0}\subseteq\analytic(\Omega)\subseteq\sheaf[\Omega]{n}$ for any open set $\Omega\subseteq\R[n]$, i.e.~$P$ is $\sheaf{}$-hypoelliptic according to
Theorem \ref{ThmCPDO1}.

On the other hand if $P$ is not elliptic then there has to exist some 
$\xi\in\R[n]\setminus\{0\}$ such that $p_d(\xi)=0$, where $p_d(\xi)$ is the principal symbol of $P=P(D)$. Let $N_\xi=\Set{y\in\R[n]\given y\xi=0}$ be the hyperplane normal to $\xi$ and let
\begin{equation*}
    H_\xi^+=\Set*{y\in\R[n]\given y\xi>0}\qquad\&\qquad
    H_\xi^-=\Set*{y\in\R[n]\given y\xi<0}
\end{equation*}
be the open half-spaces separated by $N_\xi$. Clearly $\overline{H_\xi^-}=H_\xi^-\cup N_\xi$.
According to \cite[Theorem 8.6.7]{HoeBook1} there exists some function $u\in\E_n(\R[n])$ such
that $Pu=0$ and $\supp u\subseteq \overline{H_\xi^-}$. In fact,  the proof of
\cite[Theorem 8.6.7]{HoeBook1} implies that $u$ does not vanish identically on $H_\xi^-$ but
of course $u\equiv 0$ on $H_\xi^+$. 
Therefore $u\notin\sheaf[{\R[n]}]{n}$ since the sheaf $\sheaf{}$ is quasianalytic,
cf.~subsection 2.1. 
Hence $P$ is not $\sheaf{}$-hypoelliptic by Theorem \ref{ThmCPDO1}.
\end{proof}

%One famous result on the regularity of differential operators with constant coefficients is that such operators are analytic hypoelliptic if and only if 
%they are elliptic. The theory developed in this section allows us to generalize this result from the analytic category to quasianalytic structures.
%\begin{Thm}
 %   Let $\sheaf{}$ be a quasianalytic semiregular sheaf satisfying \ref{Convolution} and $P$ be a differential operator with constant coefficients. 
  %  Then $P$ is $\sheaf{}$-hypoelliptic if and only if $P$ is elliptic.
%\end{Thm}
%\begin{proof}
%    From Theorem \ref{ThmCPDO1} we can infer that any $\sheaf{}$-hypoelliptic operator $P$ with constant coefficients is smooth hypoelliptic,
 %   since any fundamental solution of $P$ is of class $\sheaf{}$, and therefore smooth, outside the origin.
 %   By \cite{RodinoBook} there is some $s\geq 1$ such that $P$ is $s$-hypoelliptic, i.e.~$\G^s$-hypoelliptic where $\G^s$ is the Gevrey sheaf of degree $s$. Of course $\G^1=\analytic$ so we are done with $s=1$.
 %   According to \cite{RodinoBook} the index $s\geq 1$ is minimal, i.e.~the operator $P$ is not $t$-hypoelliptic for any $1\leq t<s$. In fact,
 %   there has to 
 %   be some $u_0\in\Distr[U]{n}$ such that $Pu_0=0$ but $u_0\notin\G^t(U)$
  %  for any $1\leq t< s$.
%\end{proof}
\begin{Rem}\leavevmode
\begin{enumerate}[label=(\arabic*),itemsep=0.2em]
        \item If $\sheaf{}$ is a microlocal sheaf then condition \ref{WFanalMap}
        implies that the following holds:
        \begin{enumerate}[label=(M\arabic*$^\prime$),start=4]
            \item\label{WFAnaDiff} If $\Phi:U\rightarrow V$ is a real-analytic diffeomorphism 
        then $\WFR \Phi^\ast(u)=\Phi^\ast(\WFR u)$ for any $u\in\Distr[V]{n}$.
        \end{enumerate}
        Using standard arguments, cf.~\cite[Chapter 8]{HoeBook1}, we can associate
        to each $u\in\Distr[M]{}$ defined on real-analytic manifolds $M$
        the ultradifferentiable wavefront set $\WFR u\subseteq S^\ast M$ defined
        as a closed subset of the cospherical bundle of $M$, which is a fiber bundle
        with typical fiber $S^{n-1}$.
        \item For a basic microlocal theory with respect to analytic differential operators it would be enough that $\WFR$ is invariant under analytic differential operators and diffeomorphisms. 
        Hence we may say that an isotropic ultradifferentiable sheaf $\sheaf{}$ is 
        \emph{weakly microlocal} if $\sheaf{}$ is semiregular and there exists a sheaf morphism
        $\WFR: \Distr{}\rightarrow \mathfrak{p}^{sc}$ satisfying \ref{BasicWF0}, \ref{WFComparision}, 
        \ref{WFSing}, \ref{WFInvolution}, \ref{WFAnaDiff}, \emph{\ref{WFsmoothing}} and
        \emph{\ref{AnalEllWF}}.
    \end{enumerate}
\end{Rem}

\begin{Rem}
 Suppose that $\sheaf{}$ is a non-quasianalytic semiregular sheaf satisfying \emph{\ref{Convolution}}.
 Then we observe that the space $\D_{\sheaf{}}(\R[n])=\sheaf[{\R[n]}]{n}\cap\D_n(\R[n])$ fulfills the hypothesis assembled in \cite[Definition 1.1]{Schapira68},
 cf.~Corollary \ref{WFCor3}.
 Then \cite[Proposition 1.1]{Schapira68} shows that there exists a locally convex topology on
 $\D_{\sheaf{}}(\R[n])$ such that
 \begin{itemize}[itemsep=0.3ex]
     \item The mappings $\E^\prime(\R[n])\times\D_{\sheaf{}}\rightarrow \D_{\sheaf{}}$, defined by convolution and multiplication
     $\D_{\sheaf{}}\times\D_{\sheaf{}}\rightarrow \D_{\sheaf{}}$ are separately continuous and the topology is the finest  locally convex topology.
     \item The topology is ultrabornological.
     \item If $\D_{\sheaf{}}(K)=\Set{\varphi\in\D_{\sheaf{}}\given \supp \varphi\subseteq K}$ for $K\subseteq \R[n]$ compact then
     \begin{equation*}
         \D_{\sheaf{}}=\varinjlim_{K\Subset \R[n]} \D_{\sheaf{}}(K)
     \end{equation*}
     as locally convex spaces.
 \end{itemize}
 For any open set $\Omega\subseteq\R[n]$ we define $\D_{\sheaf{}}(\Omega)=\varinjlim_{K\Subset\Omega}\D_{\sheaf{}}(K)$ as locally convex spaces and
 equip $\sheaf[\Omega]{n}$ with the
 coarest topology such that all the maps $f\rightarrow \varphi f$, 
 $\varphi\in\D_{\sheaf{}}$, are continuous. The dual spaces 
 $\D^\prime_{\sheaf{}}(\Omega)=(\D_{\sheaf{}}(\Omega))^\prime$
 and $\sheaf{n}^\prime(\Omega)=(\sheaf[\Omega]{n})^\prime$ are conferred with the
 Mackey topology. Then the presheaf $\D_{\sheaf{}}$ defined by 
 $\R[n]\supseteq \Omega\rightarrow\D^\prime_{\sheaf{}}(\Omega)$ is a
 soft sheaf of vector spaces \cite[Proposition 1.2]{Schapira68} which is actually
 a subsheaf of the sheaf of hyperfunctions \cite[Proposition 6.1]{Schapira68}.
 Moreover, the set of compact sections of $\D^\prime_{\sheaf{}}(\Omega)$
 can be identified with $\sheaf{n}^\prime(\Omega)$ \cite[Proposition 1.2(iii)]{Schapira68}.
\end{Rem}

\section{Geometric sheaves of smooth functions}\label{Section3}
\subsection{Regular sheaves}
\begin{Def}
An semiregular ultradifferentiable sheaf $\sheaf{}$ is regular if the following conditions are satisfied:
\begin{enumerate}[label=(R\arabic*),itemsep=0.2em]
    \item\label{ClosedComp} $\sheaf{}$ is closed under composition: Let $U\subseteq\R[n]$, $V\subseteq\R[m]$ be open sets
    and $\Phi: U\rightarrow V$ be a mapping of class $\sheaf{}$, i.e.~ $\Phi=(\Phi_1,\dotsc,\Phi_m)$
    and $\Phi_j\in\sheaf[U]{n}$.
    Then
    \begin{equation*}
        g\circ\Phi\in\sheaf[U]{n}
    \end{equation*}
    for all $g\in\sheaf[V]{m}$.
    \item\label{InverseThm} The inverse function theorem holds in $\sheaf{}$: Let 
    $U,V\subseteq\R[n]$ be open sets and $F: U\rightarrow V$
    be a mapping of class $\sheaf{}$.
    If the Jacobian 
    \begin{equation*}
        \frac{\partial F}{\partial x}(p)=
        \begin{pmatrix}
\frac{\partial F_1}{\partial x_1}(p) & \cdots &\frac{\partial F_1}{\partial x_n}(p)\\
\vdots & &\vdots\\
\frac{\partial F_n}{\partial x_1}(p) &\cdots &\frac{\partial F_n}{x_n}(p)
        \end{pmatrix}
    \end{equation*}
    at some point $p\in U$ is invertible then $F$ is a diffeomorphism of class $\sheaf{}$ near $p$. 
    %there are neighborhoods
    %$U^\prime$ of $p$ and $V^\prime$ of $q=F(p)$ and
    %a mapping $G: V^\prime\rightarrow U^\prime$ of class $\sheaf{}$
    %such that $G(q)=p$ and $F\circ G=\Id_{V^\prime}$.
    \item\label{ODEclosed} $\sheaf{}$ is closed under solving ODEs:
    Suppose that $f\in\sheaf[U\times I]{n+1}$ with $U\subseteq\R[n]$, $I\subseteq\R[]$ being open sets.
    If we consider the initial value problem
    \begin{equation}\label{IVP}
        x^\prime(t)=f(x(t),t), \quad x(t_0)=y_0, \qquad t_0\in I, \; y_0\in U
    \end{equation}
    then for each pair $(y_0,t_0)\in U\times I$ there are neighborhoods $V\subseteq\R[n]$ and $J\subseteq \R[]$ of $y_0$ and $t_0$, respectively, and  a mapping $x: J\times V\rightarrow V$
    of class $\sheaf{}$ such that $x_y=x(\,.\,,y)$ satisfying \eqref{IVP}.
\end{enumerate}
\end{Def}

\begin{Rem}
    Due to the smooth inverse function theorem it is enough to show that the inverse function $F^{-1}$ is of class $\sheaf{}$ near $F(p)$ in order to establish \ref{InverseThm}.
\end{Rem}
We leave the following characterization of mappings of class $\sheaf{}$ to the reader.
\begin{Lem}\label{CharactMapping}
    Let $\sheaf{}$ be an isotropic sheaf which is closed under composition and $U\subseteq\R[n]$, $V\subseteq\R[m]$ two open sets.
    If $\Phi=(\varphi_1,\dotsc,\varphi_m):\, U\rightarrow V$ is a mapping then
    the following statements are equivalent:
    \begin{enumerate}
        \item $\varphi_j\in\sheaf[U]{n}$ for all $j=1,\dotsc,m$.
        \item $\chi\circ \Phi\in\sheaf[U]{n}$ for all $\chi\in\sheaf[V]{m}$.
    \end{enumerate}
    \begin{center}
        \begin{tikzcd}
            U\arrow[rr,"\Phi"]\arrow[dr,"\chi\circ\Phi"']& &V\arrow[dl,"\chi"]\\
            & \mathbb{C} &
        \end{tikzcd}
    \end{center}
\end{Lem}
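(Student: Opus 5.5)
The two implications go in opposite directions, and each rests on one structural property: (1)$\Rightarrow$(2) is exactly closure under composition \ref{ClosedComp}, and (2)$\Rightarrow$(1) needs that the coordinate functions on $V$ belong to $\sheaf[V]{m}$, which comes from the polynomial axiom (U1) together with the sheaf property.

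\emph{(1)$\Rightarrow$(2).} Suppose $\varphi_j\in\sheaf[U]{n}$ for $j=1,\dotsc,m$. Then $\Phi:U\rightarrow V$ is by definition a mapping of class $\sheaf{}$. Hence \ref{ClosedComp} gives $\chi\circ\Phi\in\sheaf[U]{n}$ for every $\chi\in\sheaf[V]{m}$, so there is nothing further to check.

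\emph{(2)$\Rightarrow$(1).} Fix $j\in\{1,\dotsc,m\}$ and let $\pi_j(y)=y_j$ be the $j$-th coordinate function on $\R[m]$. It is a polynomial, so (U1) gives $\pi_j\in\mathcal{P}_m\subseteq\sheaf[{\R[m]}]{m}$. Since $\sheaf{m}$ is a subsheaf of $\E_m$, its restriction $\pi_j\vert_V$ lies in $\sheaf[V]{m}$. Taking $\chi=\pi_j\vert_V$ in (2) yields $\varphi_j=\pi_j\circ\Phi\in\sheaf[U]{n}$, which is (1).

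The only point needing care is that restrictions of sections to open subsets stay in the sheaf. This is built into the notion of a subsheaf, so I do not expect any step to be a real obstacle. Note also that (2)$\Rightarrow$(1) does not use closure under composition at all. That hypothesis is needed only for (1)$\Rightarrow$(2).
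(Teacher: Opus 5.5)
Your proof is correct. The paper leaves this lemma to the reader, and your argument is exactly the intended one: (1)$\Rightarrow$(2) is the definition of closure under composition (R1), and (2)$\Rightarrow$(1) follows by testing (2) against the restricted coordinate polynomials $y_j\vert_V\in\sheaf[V]{m}$, which (U1) and the sheaf restriction maps provide. You are also right that the second direction does not use the composition hypothesis.
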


\begin{Thm}
    Let $\sheaf{}$ be a semiregular sheaf. Then the inverse function theorem holds
    in $\sheaf{}$, i.e.~\emph{\ref{InverseThm}} holds in $\sheaf{}$, if and only if the implicit function theorem holds in $\sheaf{}$:
    \begin{enumerate}[label=\emph{(R{\arabic*}')},start=2]
        \item\label{ImplicitFunctThm}
        Let $U\subseteq\R[n+m]$ (with coordinates $(x,y)=(x_1\dotsc,x_n,y_1,\dotsc,y_m)$)
        and $\Psi: U\rightarrow \R[m]$ a mapping of class $\sheaf{}$.
        If $\Psi(x_0,y_0)=0$ and $\partial \Psi(x_0,y_0)/\partial y$ is invertible at some point $(x_0,y_0)\in U$ then there are open neighborhoods $V\subseteq\R[n]$ of $x_0$ and
        $W\subseteq\R[m]$ of $y_0$ and a (unique) mapping $\varphi: V\rightarrow W$ of class $\sheaf{}$ such
        that $V\times W\subseteq U$, $\varphi(x_0)=y_0$ and
        \begin{equation*}
            \Psi(x,\varphi(x))=0,\qquad \fa x\in V.
        \end{equation*}
        %\item The sheaf $\sheaf{}$ is closed under inversion: If $f\in\sheaf[n]{U}$ is such
        %that $f(x)\neq 0$ for all $x\in U$ then $1/f\in\sheaf[n]{U}$.
    \end{enumerate}
    Moreover, if \ref{ImplicitFunctThm} holds then the following holds:
    \begin{enumerate}[label=\emph{(R\arabic*)},start=4]
        \item\label{Inversion} 
        The sheaf $\sheaf{}$ is closed under inversion: If $f\in\sheaf[U]{n}$ is such
        that $f(x)\neq 0$ for all $x\in U$ then $1/f\in\sheaf[U]{n}$.
    \end{enumerate}
\end{Thm}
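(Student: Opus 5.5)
The argument follows the standard smooth proofs, with \ref{ClosedComp} replaced by what semiregularity gives: the sheaf is closed under analytic pullbacks \ref{Comp1} and under derivation \ref{DerivClosed}, each $\sheaf[U]{n}$ is an algebra, and \ref{Tensor} together with \ref{AnalyticIncl} lets us combine components. Before anything else, we note that a mapping $U\to\R[m]$ of class $\sheaf{}$ simply means that all components lie in $\sheaf[U]{n}$.

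\emph{(R2) $\Rightarrow$ (R2$'$).} Given $\Psi$ as in \ref{ImplicitFunctThm}, set $F(x,y)=(x,\Psi(x,y))$ on $U$. Each component of $F$ is of class $\sheaf{}$: the coordinate functions $x_j$ are polynomials, hence in $\sheaf{}$ by (U1). The Jacobian of $F$ at $(x_0,y_0)$ is block triangular with diagonal blocks $\Id$ and $\partial\Psi/\partial y$, so it is invertible. By \ref{InverseThm}, $F$ has an inverse $G=(G_1,G_2)$ of class $\sheaf{}$ near $(x_0,0)$, and necessarily $G(x,z)=(x,G_2(x,z))$. Put $\varphi(x)=G_2(x,0)$. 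This is the pullback of $G_2$ under the analytic map $x\mapsto(x,0)$, so by \ref{Comp1} (or by (U3) after a translation) $\varphi$ is of class $\sheaf{}$. The identity $\Psi(x,\varphi(x))=0$ follows from $F\circ G=\Id$, and uniqueness comes from the smooth implicit function theorem.

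\emph{(R2$'$) $\Rightarrow$ (R2).} Let $F:U\to V$ with invertible Jacobian at $p$, and set $q=F(p)$. Define $\Psi(y,x)=F(x)-y$ on $V\times U$, which is open in $\R[2n]$. Its components are $F_j\otimes 1-y_j$, which are of class $\sheaf{}$ by \ref{Tensor} (with $1\in\poln$) and (U1). Since $\partial\Psi/\partial x=F'$ is invertible at $(q,p)$, \ref{ImplicitFunctThm} gives $\varphi$ of class $\sheaf{}$ near $q$ with $F(\varphi(y))=y$. By the smooth inverse function theorem $\varphi=F^{-1}$ locally, and by the Remark after the definition of regular sheaves this suffices.

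\emph{(R4) from (R2$'$).} Closedness under inversion is a local property because $\sheaf{n}$ is a sheaf, so fix $x_0\in U$ with $f(x_0)\neq0$. First take $f$ real-valued, and let $\Psi(x,y)=f(x)y-1$ on $U\times\R$, which is of class $\sheaf{}$ by \ref{Tensor} and the algebra property. We have $\Psi(x_0,1/f(x_0))=0$ and $\partial_y\Psi=f(x_0)\neq0$, so $\varphi=1/f$ is of class $\sheaf{}$ near $x_0$.

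The main obstacle is complex-valued $f$. Here we use (U5): $\bar f\in\sheaf[U]{n}$, so $\abs{f}^2=f\bar f$ is real-valued, nonvanishing and in $\sheaf[U]{n}$. Then $1/f=\bar f\cdot(1/\abs{f}^2)$, and the real case applied to $\abs{f}^2$ finishes the argument. Throughout, the implicit function theorem is read for real-valued (vector) maps, so this reduction to the real case is exactly what is needed.
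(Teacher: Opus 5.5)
Your proof is correct and follows the paper's argument essentially step by step. For one direction you use $F(x,y)=(x,\Psi(x,y))$ and $\varphi(x)=G_2(x,0)$; for the converse, $F(x)-y$ (the paper uses $x-F(y)$); and for (R4), $f(x)y-1$ together with $1/f=\bar f/\abs{f}^2$. Your componentwise check via (U1) and (U4), and your use of (S4) for the restriction to $\{z=0\}$, are slightly more careful than the paper's appeal to Lemma~\ref{CharactMapping}, which formally presupposes closure under composition, and that is not part of semiregularity.
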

\begin{proof}
Assuming that \ref{InverseThm} holds and that $\Psi$ is a function satisfying the assumptions of
\emph{\ref{ImplicitFunctThm}}.
We may note that we can follow directly the classical proof in the smooth setting,
see e.g.~\cite{RudinBook}:
Setting as usual $F(x,y)=(x,\Psi(x,y))$ it is easy to see that
the Jacobi matrix of $F$ is invertible at $p_0=(x_0,y_0)$. 
Moreover, $F$ is clearly of class
$\sheaf{}$ on $U$, cf.~Lemma \ref{CharactMapping}.
Thus \ref{InverseThm} implies that there are open sets $U_1\subseteq\R[n]$,
$U_2\subseteq\R[m]$ and $V\subseteq\R[n+m]$ with
$x_0\in U_1$, $y_0\in U_2$ and $(x_0,0)\in V$ 
such that $F: U_1\times U_2\rightarrow V$ is a diffeomorphism
of class $\sheaf{}$. 
Hence there is a (small) neighborhood $W$ of $x_0$ such that
for each $x\in W$ there exists a (unique) point $y\in U_2$ such that $(x,0)=F(x,y)$. If $\Psi^{-1}=(\psi_1,\psi_2)$, where $\psi_j: \, V\rightarrow U_j$, $j=1,2$, are the components of $\Psi^{-1}$, then $\varphi: W\rightarrow U_2$
given by $\varphi(x)=\psi_2(x,0)$ is the desired function, which is obviously of 
class $\sheaf{}$.

On the other hand, suppose that
\emph{\ref{ImplicitFunctThm}} holds and $F$ satisfies the hypothesis of \ref{InverseThm}.
Then $\Psi(x,y)=x-F(y)$ fulfills the assumptions of \emph{\ref{ImplicitFunctThm}}
and therefore there is a mapping $G: V\rightarrow U$ of class $\sheaf{}$
such that $\Psi(x,y)=0$ for $(x,y)\in V\times U$ if and only if $y=G(x)$. Hence $F(G(x))=x$ and since obvious $\Psi(F(y),y)=0$ we have also $y=G(F(x))$.
Therefore $F$ is locally a diffeomorphism of class $\sheaf{}$ near $p$.

    In order to prove \emph{\ref{Inversion}}
    note that it is enough to show the assertion for real-valued functions since if $g\in\sheaf[U]{n}$ is non-zero at each point then 
    \begin{equation*}
        \frac{1}{g(x)}=\frac{\overline{g(x)}}{\abs{g(x)}^2}, \qquad x\in V
    \end{equation*}
    and $\abs{g}^2=g\overline{g}\in\sheaf[U]{n}$.
    
    So let $f\in\sheaf[U]{n}$ be real-valued such that $f(x)\neq 0$ for all $x\in U$.
    Then the mapping $F(x,y)=f(x)y-1$ is of class $\sheaf{}$ in $U\times\R[]$ by \ref{Tensor}.
    Since
    \begin{equation*}
        \frac{\partial F}{\partial y}(x,y)=f(x),\qquad (x,y)\in U\times\R[]
    \end{equation*}
    and $F(x_0,y_0)=0$ where $y_0=1/f(x_0)$, we see that the assumptions of \emph{\ref{ImplicitFunctThm}} are
    satisfied and therefore for each $x_0\in U$ there is a neighborhood $V$ such that
    $1/f\in\sheaf[V]{n}$. It follows that $1/f\in\sheaf[U]{n}$
\end{proof}
\subsection{Differential geometry in the ultradifferentiable category}
If we consider a regular sheaf $\sheaf{}$, then we are able to define a theory of manifolds of class $\sheaf{}$ mirroring the basic theory in the smooth and analytic case:

\begin{Def}[Manifolds of class $\sheaf{}$]
W say that a smooth manifold $M$ is of class $\sheaf{}$
if there is an atlas $\atlas_M$ of $M$ such that for any two charts $\varphi_1$, $\varphi_2\in\mathcal{A}$ we have that
$\varphi_1\circ\varphi_2^{-1}$ is a map of class $\sheaf{}$ whenever defined.

If $M,N$ are two manifolds of class $\sheaf{}$ with atlases $\atlas_M$, $\atlas_N$ and $\Phi: M\rightarrow N$ a mapping then
we say that $\Phi$ is a map of class $\sheaf{}$ if for all $\varphi\in\atlas_M$, $\psi\in\atlas_N$
the map
\begin{equation*}
    \psi\circ\Phi\circ\varphi^{-1}
\end{equation*}
is of class $\sheaf{}$ wherever defined.
\end{Def}

\begin{Rem}
    Regarding smooth manifolds we are going to work in the setting of \cite{BCHbook}; in particular we assume that
    a smooth manifold is paracompact.
\end{Rem}
We may consider manifolds $M$ of class $\sheaf{}$ as ringed spaces $(M,\sheaf{M})$
where $\sheaf{M}$ is the sheaf of functions of class $\sheaf{}$ over $M$, cf.~\cite{BierstoneMilman}.

The proof of the following theorem is just a verbatim repetition of the proof of the well-known statement in the smooth category applying \emph{\ref{ImplicitFunctThm}}.
\begin{Thm}\label{Submanifold}
Let $M\subseteq\R[n]$ be a subset and $\sheaf{}$ a regular sheaf. Then the following statements are equivalent:
\begin{enumerate}[label=(\roman*),itemsep=0.4ex]
    \item $M$ is a submanifold of $\R[n]$ of class $\sheaf{}$ of dimension $m$.
\item For each point $p\in M$ there is a neighborhood $\Omega\subseteq\R[n]$ of $p$ and a mapping
    $\Phi: \Omega\rightarrow \R[n-m]$ of class $\sheaf{}$ such that
    $d\Phi(p)\neq 0$ for all $p\in\Omega$ and 
    \begin{equation*}
        M\cap\Omega=\Phi^{-1}(0)=\Set*{q\in\Omega\given \Phi(q)=0}.
    \end{equation*}
    \end{enumerate}
    The number $d=n-m$ is then called the codimension of $M$.
\end{Thm}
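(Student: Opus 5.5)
The plan is to follow the classical smooth argument, replacing each smooth tool by its $\sheaf{}$-analogue: the inverse function theorem \ref{InverseThm}, the implicit function theorem \ref{ImplicitFunctThm}, closure under composition \ref{ClosedComp}, and Lemma \ref{CharactMapping}. Throughout, "submanifold of class $\sheaf{}$ of dimension $m$" is taken in the local-straightening sense. For each $p\in M$ there should be a neighborhood $\Omega$ and a diffeomorphism $\kappa:\Omega\to\kappa(\Omega)\subseteq\R[n]$ of class $\sheaf{}$ such that $\kappa(M\cap\Omega)=\kappa(\Omega)\cap(\R[m]\times\{0\})$. The condition in (ii) is read as $d\Phi(q)$ having rank $n-m$ for all $q\in\Omega$.

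\emph{(i)$\Rightarrow$(ii).} Given such a $\kappa=(\kappa_1,\dotsc,\kappa_n)$, set $\Phi=(\kappa_{m+1},\dotsc,\kappa_n)$.
\begin{itemize}
\item Each $\kappa_j$ lies in $\sheaf[\Omega]{n}$, so $\Phi$ is of class $\sheaf{}$ by Lemma \ref{CharactMapping}.
\item The rows of $d\Phi$ are rows of the invertible matrix $d\kappa$, so $d\Phi$ has full rank $n-m$ on all of $\Omega$.
\item By construction $M\cap\Omega=\Phi^{-1}(0)$.
\end{itemize}
This direction uses nothing beyond the definitions.

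\emph{(ii)$\Rightarrow$(i).} Fix $p\in M$ and $\Phi$ as in (ii). Since $d\Phi(p)$ has rank $n-m$, after a permutation of coordinates we may write $x=(x',x'')\in\R[m]\times\R[n-m]$ with $\partial\Phi/\partial x''(p)$ invertible. A coordinate permutation is linear, hence real-analytic, so it preserves class $\sheaf{}$ by \ref{Comp1}. Define
\begin{equation*}
\kappa(x',x'')=(x',\Phi(x',x'')).
\end{equation*}
Its components are coordinate functions (polynomials, which belong to $\sheaf{}$ by (U1)) together with the components of $\Phi$, so $\kappa$ is of class $\sheaf{}$. Its Jacobian at $p$ is block triangular with invertible diagonal blocks. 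By \ref{InverseThm}, $\kappa$ is therefore a diffeomorphism of class $\sheaf{}$ from a neighborhood $\Omega'\subseteq\Omega$ of $p$ onto its image, and it maps $M\cap\Omega'$ onto $\kappa(\Omega')\cap(\R[m]\times\{0\})$. This is the required straightening.

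Alternatively, \ref{ImplicitFunctThm} produces a map $\varphi$ of class $\sheaf{}$ with $M$ locally equal to the graph $\{x''=\varphi(x')\}$. Then $x'\mapsto(x',\varphi(x'))$ gives a local parametrization. Transition maps between two such parametrizations have the form $\pi\circ\psi$, where $\psi$ is a parametrization and $\pi$ is a linear projection; these are of class $\sheaf{}$ by \ref{ClosedComp}. Hence $M$ carries an atlas of class $\sheaf{}$, whichever definition of submanifold is used.

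\emph{Main obstacle.} The only genuinely non-formal step is compatibility of charts: composing $\sheaf{}$-maps with the inverse of a straightening diffeomorphism must stay inside $\sheaf{}$. This is exactly where regularity is needed, namely \ref{InverseThm} for the inverse and \ref{ClosedComp} for compositions. The plan is to verify this carefully once, via Lemma \ref{CharactMapping}, and note that the remaining steps are verbatim the smooth proof.
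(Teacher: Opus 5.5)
Your proposal is correct and is essentially the paper's argument, which is simply the verbatim smooth proof carried out with the $\sheaf{}$-implicit function theorem \emph{\ref{ImplicitFunctThm}}. Your primary use of \ref{InverseThm} for the straightening map $\kappa(x',x'')=(x',\Phi(x))$ is immaterial, since the preceding theorem shows that \ref{InverseThm} and \emph{\ref{ImplicitFunctThm}} are equivalent for semiregular sheaves. One caveat: (i)$\Rightarrow$(ii) is purely definitional only because you adopted the slice-chart notion of submanifold, which the paper never defines explicitly; with a definition via $\sheaf{}$-embeddings it would need one further application of \ref{InverseThm}, exactly as in the smooth case.
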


\begin{Def}[Bundles of class $\sheaf{}$]
Suppose that $\sheaf{}$ is a regular sheaf and $M$ is a manifold of class $\sheaf{}$.
\begin{enumerate}[label=(\arabic*),itemsep=0.4em]
    \item A fiber bundle of class $\sheaf{}$ over $M$ with fiber $F$, $F$ being itself a manifold of class $\sheaf{}$, is a manifold of class $\sheaf{}$ such that the following holds
    \begin{enumerate}[label=(\roman*),itemsep=0.1em]
        \item There is a map $\pi: V\rightarrow M$ of class $\sheaf{}$, called the projection.
        \item For each point $p\in M$ there are a neighborhood $U\subseteq M$ of $p$ and a diffeomorphism $\varphi: \pi^{-1}(U)\rightarrow U\times F$ such that the following diagram commutes
        \begin{center}
            \begin{tikzcd}
                \pi^{-1}(U) \arrow[r,"\varphi"]\arrow[d, "\pi"'] & U\times F\arrow[dl]\\
                U &
            \end{tikzcd}
        \end{center}
    \end{enumerate}
    \item A real (resp.~complex) vector bundle of class $\sheaf{}$ and fiber dimension $N$ over $M$ is a fiber bundle
    $E$ over $M$ with fiber $\R[N]$ (resp.~$\C[N]$) with the following properties
    \begin{enumerate}[label=(\roman*),itemsep=0.1ex]
\item On each fiber $V_p=\pi^{-1}(p)$, $p\in M$, there is a vector space structure (over $\mathbb{K}=\R[],\C[]$).
 \item If $p,q\in M$ and if $U_p$ and $U_q$ are neighborhoods of $p$ and $q$, respectively, such that $U_p\cap U_q\neq\emptyset$ then there is a mapping
    $g_{pq}: U_p\cap V_q\rightarrow\GL(N,\mathbb{K})$ of class $\sheaf{}$ such that
    \begin{enumerate}[label=\greek*)]
\item $g_{pp}=\Id$
\item $g_{xy}g_{yz}=g_{xz}$ wherever defined.
\item If $X\in \pi^{-1}(U_p)\cap \pi^{-1}(U_q)$ and $(x,e)=\psi_p(X)$ then
$\psi_q\circ\psi_p^{-1}((x,e))=(x, g_{pq}(x)e)$.
    \end{enumerate}
    \end{enumerate}
\end{enumerate}
The following Proposition is proved in the same manner as in the smooth category.
The details are left for the reader.
\begin{Prop}
    If $M$ is a manifold of class $\sheaf{}$ of dimension $n$ then the tangential bundle $TM$ is a $\R$-vector bundle of class $\sheaf{}$ with fiber dimension $n$ whereas the complexified tangent bundle $\C TM$, with fibers $\C T_pM=\C\otimes T_pM$, is a $\C$-vector bundle
    of class $\sheaf{}$.
\end{Prop}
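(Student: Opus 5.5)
The plan is to build $TM$ (and $\C TM$) by the usual construction from an atlas $\atlas_M$ of class $\sheaf{}$ and check that all resulting transition data are of class $\sheaf{}$.

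For each chart $\varphi:U\to\varphi(U)\subseteq\R[n]$ in $\atlas_M$, define the local trivialization
\begin{equation*}
\psi_\varphi:\pi^{-1}(U)\to U\times\R[n],\qquad \psi_\varphi(X)=\bigl(p,d\varphi(p)X\bigr),\quad X\in T_pM .
\end{equation*}
For two overlapping charts $\varphi_1,\varphi_2$, the change of trivialization is
\begin{equation*}
(p,e)\mapsto \bigl(p,\,g_{12}(p)e\bigr),\qquad g_{12}(p)=\frac{\partial(\varphi_1\circ\varphi_2^{-1})}{\partial x}\bigl(\varphi_2(p)\bigr).
\end{equation*}
The entries of $g_{12}$ are first derivatives of the components of $\varphi_1\circ\varphi_2^{-1}$. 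These components lie in $\sheaf{n}$ by the definition of the atlas, so the entries lie in $\sheaf{n}$ by \ref{DerivClosed}. Composing with the chart $\varphi_2$ and using \ref{ClosedComp} shows that $g_{12}$ is of class $\sheaf{}$ on $M$. It takes values in $\GL(n,\R)$ because $\varphi_1\circ\varphi_2^{-1}$ is a diffeomorphism. The cocycle identities $g_{pp}=\Id$ and $g_{xy}g_{yz}=g_{xz}$ follow from the chain rule, exactly as in the smooth case.

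Next, $TM$ itself must be given the structure of a manifold of class $\sheaf{}$. Its charts are $(\varphi\times\Id)\circ\psi_\varphi:\pi^{-1}(U)\to\varphi(U)\times\R[n]$. The corresponding transition maps have the form
\begin{equation*}
(x,e)\mapsto\bigl(\varphi_1\circ\varphi_2^{-1}(x),\,J(x)e\bigr),
\end{equation*}
where $J$ is the Jacobian matrix of $\varphi_1\circ\varphi_2^{-1}$. Each component is a finite sum of products of functions of $x$ lying in $\sheaf{n}$ with the coordinate functions $e_k$. By \ref{Tensor} and (U1) (polynomials belong to $\sheaf{}$), each such product is of class $\sheaf{}$ on the product domain, and sums and products stay in the algebra. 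So the transition maps are of class $\sheaf{}$, and Lemma \ref{CharactMapping} handles the passage between componentwise and mapping-wise regularity.

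In these charts the projection $\pi$ is just $(x,e)\mapsto x$. This is linear and hence of class $\sheaf{}$, for instance by \ref{Comp1}. The trivializations $\psi_\varphi$ are diffeomorphisms of class $\sheaf{}$ for the same reason, and they are linear on fibres by construction. This establishes that $TM$ is a real vector bundle of class $\sheaf{}$ with fibre dimension $n$.

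For $\C TM$, use the same trivializations extended complex-linearly to $\C\otimes T_pM\cong\C[n]$. The transition functions are the same real matrices $g_{12}$, now viewed in $\GL(n,\C)$. On the total space, the coordinates $e\in\C[n]\cong\R[2n]$ are treated by splitting into real and imaginary parts, so the same argument applies.

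The main obstacle is a small technical point rather than a conceptual one: one has to confirm that the product-type functions $a(x)e_k$ really lie in $\sheaf{n+n}$ on open sets of the form $\varphi(U)\times\R[n]$. This is exactly \ref{Tensor} combined with $e_k\in\mathcal{P}_n\subseteq\sheaf[{\R[n]}]{n}$, and the sheaf property then lets us pass to arbitrary open subsets. Everything else is a verbatim transcription of the smooth proof.
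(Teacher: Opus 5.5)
Your proposal is correct and follows the paper's route. The paper says only that the proof is the smooth-category construction and leaves the details to the reader, and your check of the transition data supplies those details: first derivatives via (S2), composition with charts via (R1), and the fibre-linear terms $a(x)e_k$ on $\varphi(U)\times\R[n]$ via (U1) and (U4).

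One cosmetic point: $\pi$ is of class $\sheaf{}$ simply because in charts its components are coordinate polynomials, so (U1) suffices there. Lemma \ref{CharactMapping} is also unnecessary, since maps of class $\sheaf{}$ are defined componentwise anyway.
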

%If $M$ is a manifold of class $\sheaf{}$ then a $\mathbb{K}$-vector bundle of fiber dimension $N$ is
%a manifold $V$ of class $\sheaf{}$ with the following properties:
%\begin{enumerate}[label=(\roman*),itemsep=0.1em]
  %  \item There is a map $\pi: V\rightarrow M$ of class $\sheaf{}$, called the projection.
 %   \item On each fiber $V_p=\pi^{-1}(p)$, $p\in M$, there is a vector space structure (over $\mathbb{K}$).
  %  \item For each $p\in M$ there are an open set $U_p$ in $M$,  $p\in M$,  and a diffeomorphism
 %   $\psi_p: p^{-1}(U_p)\rightarrow U_p\times\mathbb{K}^N$ of class
%    $\sheaf{}$ such that the following diagram commutes:
 %   \begin{center}
%    \begin{tikzcd}
 %p^{-1}(U_p)\arrow[r,"\psi_p"]\arrow[d,"p"] & U_p\times\mathbb{K}^N\arrow[d,"\pr_1"]\\
%U_p\arrow[r,"\Id"] &U_p
 %   \end{tikzcd}
 %   \end{center}
   % \item If $p,q\in M$ and $U_p\cap U_q=\emptyset$ then there is a mapping
  %  $g_{pq}: U_p\cap V_q\rightarrow\GL(N,\mathbb{K})$ of class $\sheaf{}$ such that
   % \begin{enumerate}[label=\greek*)]
%\item $g_pp=\Id$
%\item $g_{xy}g_{yz}=g_{xz}$ wherever defined.
%\item If $X\in p^{-1}(U_p)\cap p^{-1}(U_q)$ and $(x,e)=\psi_p(X)$ then
%$\psi_q\circ\psi_p^{-1}((x,e))=(x, g_{pq}e)$.
  %  \end{enumerate}
%\end{enumerate}
\end{Def}
A section of class $\sheaf{}$ of the vector bundle $E$ is a mapping $X: M\rightarrow E$ such that
$p\circ M=\Id_M$. We denote the sheaf of sections of class $\sheaf{}$ by $\sheaf{E}$ and
by $\Distr{E}$ the sheaf of distributional sections of $E$.

In particular, a section $X\in\VFS[U]=\mathfrak{X}_{\sheaf{}}(U)=\sheaf[U]{TM}$ is a vector field of class $\sheaf{}$ defined on the open set $U\subseteq M$.
Note that $\VFS[U]$ is a (real) Lie algebra with the Lie bracket
\begin{equation*}
    [X,Y]=XY-YX \qquad X,Y\in\VFS[U]
\end{equation*}
If $X\in\VFS[U]$ and $p\in U$ then the integral curve $c_p$ through $p$ is defined as the solution
of the initial value problem
\begin{equation*}
    c_p^\prime(t)=X\bigl(c_p(t)\bigr)\qquad x_p(0)=p. 
\end{equation*}
It follows from \ref{ODEclosed} (and \ref{ClosedComp}) that $c_p$ is a curve of class
$\sheaf{}$ on its domain of definition.
Adapting the arguments in the smooth case in the obvious way we see that the local flow
$\Fl_X$ of $X$ is of class $\sheaf{}$ on its domain of definition when $X$ is of class $\sheaf{}$.
\begin{Def}
Following \cite{TrevesBook25} we say that, given some subset $\mathcal{A}\subseteq\VFS[M]$, that a subset $N$ of $M$ is
an integral manifold of class $\sheaf{}$ of $\mathcal{A}$ if
$N$ is an immersed submanifold of class $\sheaf{}$\footnote{Which is defined in the obvious way} of $M$ and satisfies the following properties:
\begin{enumerate}[label=(\roman*)]
    \item For each $p\in N$ we have that $\mathcal{A}_p=\Set{X(p)\in T_pM \given X\in\mathcal{A}}=T_p N$
    \item If $N^\prime$ is another immersed submanifold of $M$ satisfying (i) then
    either $N^\prime\subseteq N$ or $N\cap N^\prime=\emptyset$.
\end{enumerate}
\end{Def}

We are now at a point where we can formulate a quasianalytic version of the Nagano theorem \cite{NaganoPaper}.

\begin{Thm}\label{NaganoThm}
    Let $\sheaf{}$ be a quasianalytic regular sheaf and $M$ be a manifold of class $\sheaf{}$.
    If $\mathfrak{g}\subseteq\mathfrak{X}_{\sheaf{}}$ is a Lie subalgebra
    then the integral manifolds of $M$ form a foliation of class $\sheaf{}$ of $M$, i.e.~for each point $p\in M$ there is an (unique) integral manifold
    $N$ of class $\sheaf{}$ of $\mathfrak{g}$ such that $p\in N$.
\end{Thm}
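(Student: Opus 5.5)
We follow the classical proof of Nagano's theorem, in the form given in \cite{TrevesBook25} or \cite{BCHbook}. Quasianalyticity is used exactly where the analytic proof uses the identity theorem. Regularity of $\sheaf{}$ is used to build the orbit manifolds via flows, inverse and implicit functions.

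\emph{Step 1 (local orbit chart).} Fix $p\in M$ and work in a chart of class $\sheaf{}$, so that $M\subseteq\R[n]$ locally. Let $d=\dim\mathfrak{g}_p$ and choose $X_1,\dotsc,X_d\in\mathfrak{g}$ with $X_1(p),\dotsc,X_d(p)$ linearly independent. Consider
\begin{equation*}
    \Theta(t_1,\dotsc,t_d)=\Fl_{X_1}^{t_1}\circ\dotsb\circ\Fl_{X_d}^{t_d}(p).
\end{equation*}
By \ref{ODEclosed} and \ref{ClosedComp} the flows are of class $\sheaf{}$, so $\Theta$ is of class $\sheaf{}$. Its differential at $0$ has rank $d$. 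By \ref{InverseThm}, in the form of Theorem \ref{Submanifold} and \ref{ImplicitFunctThm}, the image $N_p$ of a small cube is an embedded $d$-dimensional submanifold of class $\sheaf{}$.

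\emph{Step 2 (tangency, the main obstacle).} We must show that $\mathfrak{g}_q=T_qN_p$ for all $q\in N_p$. In the analytic case one expands $(\Fl_X^{t})_\ast Y$ as a Taylor series in iterated brackets $\mathrm{ad}_X^kY$. Since we cannot assume convergence of such series, we argue by quasianalyticity instead.

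Fix $X,Y\in\mathfrak{g}$ and set $Z(t)=(\Fl_X^{-t})_\ast Y$, evaluated at a point $q$. This is a function of $t$ of class $\sheaf{}$, by \ref{ClosedComp} and \ref{DerivClosed} applied to the Jacobian of the flow. Choose $Y_1,\dotsc,Y_r\in\mathfrak{g}$ spanning $\mathfrak{g}_{\Fl_X^t(q)}$ near $t=0$.

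We want to show $Z(t)\in\mathfrak{g}_q$. To do so, pick $\sheaf{}$-functions $\ell$ annihilating $\mathfrak{g}_q$; these are obtained from cofactors of the matrix $(Y_j)$, using \ref{Inversion}. The function $t\mapsto\ell(Z(t))$ has all $t$-derivatives at $0$ given by $\ell(\mathrm{ad}_X^kY(q))=0$, since $\mathfrak{g}$ is a Lie algebra.

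Quasianalyticity then gives $\ell(Z(t))\equiv 0$ on the connected interval. The point to check is that a quasianalytic class in one variable is determined by its Taylor series at a point, not just by vanishing on open sets. Our definition of quasianalyticity only gives $\sheaf{}\cap\D=\{0\}$. So either we argue via the \emph{constant rank} of $\mathfrak{g}$ along integral curves, or we show that flat functions vanish.

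For the latter, given $f$ flat at $0$, the function $f\cdot\mathbf 1_{t\ge0}$ is smooth. We would need it in $\sheaf{}$, which requires an additional argument. If this fails, we instead prove the rank statement directly: we show $\dim\mathfrak{g}_{\Fl_X^t(q)}$ is locally constant in $t$, using the identity $\frac{d}{dt}Z(t)=(\Fl_X^{-t})_\ast[X,Y]$. This yields a linear ODE system $\dot Z=A(t)Z$ with $\sheaf{}$ coefficients. Uniqueness of solutions of this linear system (no quasianalyticity needed) gives invariance of $\mathfrak{g}$ under the flows, since $(\Fl_X^{t})_\ast$ maps $\mathfrak{g}_q$ onto $\mathfrak{g}_{\Fl_X^t(q)}$. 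Quasianalyticity then enters only in showing that the coefficient matrix $A$ exists locally, i.e.\ that the rank is locally constant. There we use that the vanishing set of a minor is either everything or nowhere dense, which is where $\sheaf{}\cap\D=\{0\}$ and Lemma \ref{QuasiLemma1} are invoked.

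\emph{Step 3 (global).} Define the orbit through $p$ as the set of points reachable by piecewise flows of elements of $\mathfrak{g}$. By Step 2, flows preserve $\mathfrak{g}$-ranks and map local integral charts of Step 1 to integral charts. These charts are compatible with transition maps of class $\sheaf{}$, by \ref{ClosedComp} and \ref{InverseThm}, and so give the orbit the structure of an immersed integral manifold of class $\sheaf{}$.

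Maximality (property (ii)) follows because any connected integral manifold $N'$ through a point of $N$ is invariant under the flows tangent to it. Hence $N'$ is contained in the orbit, while disjoint orbits stay disjoint. Uniqueness is then immediate.
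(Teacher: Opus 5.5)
\textbf{There is a genuine gap in Step~2.} Step~2 carries the whole content of the theorem, and it is not closed.

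Your first argument is exactly the classical proof the paper says is repeated verbatim: $t\mapsto\ell\bigl((\Fl_X^{-t})_\ast Y(q)\bigr)$ has Taylor coefficients $\ell(\mathrm{ad}_X^kY(q))=0$ at $t=0$, hence vanishes. But it needs quasianalyticity in the \emph{Taylor} sense: a function of class $\sheaf{}$ that is flat at a point vanishes on the connected domain. You leave this open.

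The fallback does not repair it. What $\sheaf{}\cap\D=\{0\}$ gives you (e.g.\ via $t\mapsto f(t)f(2t_1-t)$ and \ref{Comp1}) is only the open-set identity theorem. With it you get the following:
\begin{itemize}
\item The maximal rank $r^\ast$ of $\mathfrak g$ along $\gamma(t)=\Fl_X^t(q)$ is attained off a closed nowhere dense set $S$ of times.
\item Your linear ODE then shows that $(\Fl_X^{-t})_\ast\mathfrak g_{\gamma(t)}$ is a fixed $r^\ast$-dimensional space $W$ on each component of the complement of $S$.
\item At a time in $S$, possibly $t=0$, passing to the limit only yields $\mathfrak g_q\subseteq W$, not equality.
\end{itemize}
Minors with nowhere dense zero sets do not give locally constant rank, and the coefficient matrix $A(t)$ need not exist at such times.

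Here is a concrete example. On $\mathbb{R}^2$ take $X=\partial_x$ and $Y=\phi(x)\partial_y$ with $\phi(x)=e^{-1/x^2}$. Then $\mathfrak g$ is spanned by $\partial_x$ and the fields $\phi^{(k)}(x)\partial_y$. Every minor along $\gamma(t)=(t,0)$ is either identically zero or has only isolated zeros. Yet $\dim\mathfrak g_{\gamma(t)}$ is $2$ for $t\neq0$ and $1$ at $t=0$, and no integral manifold passes through the origin. So the implication you invoke is false, and at $t=0$ one genuinely needs the Taylor-type statement.

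\textbf{How to close it.} The proof the paper has in mind is the analytic argument of \cite{BERbook}, carried out for Denjoy--Carleman classes in \cite{FuerdoesThesis}. It uses quasianalyticity in exactly this Taylor form. This is the form axiomatized in \cite{BierstoneMilman}, which the paper also invokes for quasianalytic regular sheaves; for the quasianalytic weight-matrix classes of Section~\ref{Section5} it is the Denjoy--Carleman theorem. You should adopt it explicitly. The paper does not derive it from $\sheaf{}\cap\D=\{0\}$, and your suggestion $f\cdot\mathbf 1_{t\ge0}\in\sheaf{}$ is not available from the axioms.

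With that, your first argument closes Step~2 directly, with no ODE for $A(t)$:
\begin{itemize}
\item Take $\ell$ to be any fixed covector in $T^\ast_qM$ annihilating $\mathfrak g_q$. You need no cofactors and no \ref{Inversion}. You also need no preliminary choice of $Y_j$ spanning $\mathfrak g$ near $\gamma$, which already presupposes the constant rank you are trying to prove.
\item This gives $(\Fl_X^{-t})_\ast\mathfrak g_{\Fl_X^t(q)}\subseteq\mathfrak g_q$.
\item Applying the same at $\Fl_X^t(q)$ with $-t$ gives the reverse inclusion, so $(\Fl_X^t)_\ast\mathfrak g_q=\mathfrak g_{\Fl_X^t(q)}$.
\end{itemize}
This makes the image of $\Theta$ in Step~1 an integral manifold, and Step~3 is then the standard gluing argument.
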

The proof is principally a verbatim repetition of the proof given in e.g.~\cite{BERbook}, cf.~\cite{FuerdoesCR}. For a detailed proof in the case
of quasianalytic Denjoy-Carleman classes, see \cite{FuerdoesThesis}.
\begin{Rem}
 In the smooth category the Theorem of Sussmann \cite{Sussmann} is a weak 
 analogue of the Nagano theorem. It is possible to prove
 an ultradifferentiable version of the Sussmann theorem following e.g.~the proof
 in \cite{BERbook}. For a proof in the Denjoy-Carleman setting we refer again
 to \cite{FuerdoesThesis}.
\end{Rem}

\begin{Rem}
Bierstone and Milman \cite{BierstoneMilman} showed that 
a quasianalytic regular sheaf $\sheaf{}$ admits resolution of singularities,
which they used, for example, to prove {\L}ojasiewicz inequalities for the elements
of $\sheaf[U]{n}$, for this and other applications see \cite[section 6]{BierstoneMilman}.
\end{Rem}

%If $E$ is a $\mathbb{K}$-vector bundle of class $\sheaf{}$ over an 

We are in particular interested on differential operators acting on
vector bundles. We briefly summarize the most basic facts, cf.~\cite{HoeBook1}.
Let $E$ and $F$ be two complex vector bundles over a manifold $M$, all three of class 
$\sheaf{}$. A linear operator $P:\sheaf[M]{E}\rightarrow\sheaf[M]{E}$
is a differential operator of class $\sheaf{}$ if locally $P$ is a differential operator, i.e.~let $U$ be a coordinate neighborhood in $M$ with local coordinates
$x=(x_1,\dotsc,x_n)$ and $(e_1,\dotsc,e_N)$
and $(f_1,\dotsc,f_M)$ be local frames of $E$ and $F$, respectively, over $U$ then
\begin{equation*}
    (Pu)_j=\sum_{k=1}^N P_{jk}u_k,\qquad j=1,\dotsc,M, 
\end{equation*}
where $u\vert_U=\sum_{k=1}^N u_ke_k$, $(Pu)\vert_{U}=\sum_{j=1}^M (Pu)_j f_j$
and $P_{jk}$ is a differential operator in $U$.
This construction is independent of the choices of local coordinates and local frames. We say that $P$ is of order $\leq d$ if we can write
\begin{equation*}
    P_{j,k}=\sum_{\abs{\alpha}\leq d}p_{jk,\alpha}(x)D^\alpha, \qquad 
    p_{jk,\alpha}\in\sheaf[U]{n},\qquad\fa j,k,
\end{equation*}
and $P$ is of order $d$ if $P$ is not of order $\leq d-1$.
We denote the homogeneous part of degree $d$ of $P_{jk}$ by 
\begin{equation*}
    p_{jk}^{d}(x,\xi)=\sum_{\abs{\alpha}=d} p_{jk,\alpha}(x)\xi^\alpha 
\end{equation*}
which may vanish even if $P$ is of order $d$. 
The principal symbol $p$ of a differential operator $P$ of order $d$ is locally
of the form
\begin{equation*}
    p^d(x,\xi)=\begin{pmatrix}
        p^d_{11}(x,\xi)&\dots & p^d_{1N}(x,\xi)\\
        \vdots & \ddots & \vdots\\
        p^d_{M1}(x,\xi)&\dots & p^d_{MN}(x,\xi).
    \end{pmatrix}
\end{equation*}
This definition is again independent of the choices of local coordinates and frame,
see \cite{HoeBook1} for a coordinate-free definition.

From now on we assume that $N=M$, i.e.~the bundles $E,F$ are of the same rank.
The characteristic set of a differential operator $P:\sheaf[M]{E}\rightarrow\sheaf[M]{F}$ is defined by
\begin{equation*}
    \Char P=\Set*{\gamma\in S^\ast M\given p(\gamma)\text{ is not invertible}}\subseteq S^\ast M.
\end{equation*}

If we have an ultradifferentiable class $\sheaf{}$ which is both microlocal and regular, then we would like to define $\WFR u$
for distributions $u$ on manifolds of class $\sheaf{}$.
For this we need that $\WFR$ is invariant under diffeomorphisms of class $\sheaf{}$.
We would, of course, also like to define $\WFR u$ for distributions with values in some vector bundle.
As a first step we define the wavefront set of a vector-valued distribution  $u=(u_1,\dotsc,u_m)\in\Distr[U;{\C[m]}]{n}\cong(\Distr[U]{n})^m$ by setting
\begin{equation*}
    \WFR u=\bigcup_{j=1}^m\WFR u_j.
\end{equation*}

\begin{Def}
    An ultradifferentiable sheaf $\sheaf{}$ is normal if it is microlocal and regular and
    the following invariance conditions are satisfied:
    \begin{enumerate}[label=(N\arabic*)]
\item\label{UltraWFInv0} If $\Phi: U\rightarrow V$ is a mapping of class $\sheaf{}$  
Then $\Phi^\ast u\in \Distr[U]{n}$ is well-defined and 
\begin{equation*}
            \WFR\left(\Phi^\ast u\right)\subseteq\Phi^\ast\left(\WFR u\right)=
            \Set*{\left(x,\frac{\prescript{t}{}{\Phi}^\prime(x)\eta}{\abs*{\prescript{t}{}{\Phi}^\prime(x)\eta}}\right)
            \in S^\ast U\given (\Phi(x),\eta)\in S^\ast V}
        \end{equation*}
        for $u\in\Distr[V]{m}$ with $\WFR u\cap N_\Phi=\emptyset$.
\item\label{VectorMicroEll} Let $P=(P_{jk})_{jk}$, $1\leq j,k\leq n$, be a differential operator acting on
vector-valued distributions $u\in{\Distr[U;{\C[m]}]{n}}$ and $\sheaf{}$ a normal ultradifferentiable structure. Then
\begin{equation*}
    \WFR Pu\subseteq\WFR u\subseteq\WFR Pu\cup \Char P.
\end{equation*}
    \end{enumerate}
\end{Def}
\begin{Thm}\label{UltraDiffeoWF}
If $\sheaf{}$ is a normal sheaf then the following statement holds:
\begin{enumerate}[label=\emph{(N\arabic*$^{\prime}$)}]
    \item\label{UltraWFinvar} If $\Phi: U\rightarrow V$ is a diffeomorphism of class $\sheaf{}$
then 
\begin{equation*}
    \Phi^\ast\left(\WFR u\right)=\WFR \Phi^\ast u,\qquad \fa u\in{\Distr[U]{n}}.
\end{equation*}
\end{enumerate}
\end{Thm}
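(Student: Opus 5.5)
We prove \emph{\ref{UltraWFinvar}} by applying \ref{UltraWFInv0} twice, once to $\Phi$ and once to its inverse, and using that pullback of sets in $S^\ast$ is functorial. (The statement presumably means $u\in\Distr[V]{n}$, since $\Phi^\ast u$ must be defined.) Let $\Phi:U\rightarrow V$ be a diffeomorphism of class $\sheaf{}$ and $u\in\Distr[V]{n}$.

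First we check that \ref{UltraWFInv0} applies without restriction. Since $\Phi$ is a diffeomorphism, $\prescript{t}{}{\Phi}^\prime(x)$ is invertible for every $x\in U$. Hence $\prescript{t}{}{\Phi}^\prime(x)\eta\neq 0$ for all $\eta\neq0$, so $N_\Phi=\emptyset$ and the condition $\WFR u\cap N_\Phi=\emptyset$ holds trivially. Then \ref{UltraWFInv0} gives
\begin{equation*}
    \WFR\Phi^\ast u\subseteq\Phi^\ast\left(\WFR u\right).
\end{equation*}

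For the reverse inclusion we use $\Psi=\Phi^{-1}:V\rightarrow U$. It is a map of class $\sheaf{}$ because $\Phi$ is a diffeomorphism of class $\sheaf{}$ (which is available via \ref{InverseThm} in any case). Again $N_\Psi=\emptyset$. Since $\Psi^\ast\Phi^\ast u=(\Phi\circ\Psi)^\ast u=u$, applying \ref{UltraWFInv0} to $\Psi$ and the distribution $\Phi^\ast u\in\Distr[U]{n}$ yields
\begin{equation*}
    \WFR u\subseteq\Psi^\ast\left(\WFR\Phi^\ast u\right).
\end{equation*}
We then apply $\Phi^\ast$ to both sides. It remains to verify, as a set-level identity on cospherical bundles, that $\Phi^\ast\circ\Psi^\ast=(\Psi\circ\Phi)^\ast=\Id$ on subsets of $S^\ast U$ and that $\Phi^\ast$ preserves inclusions.

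That verification is the only point needing care. The map $(y,\eta)\mapsto \bigl(\Phi^{-1}(y),\prescript{t}{}{\Phi}^\prime(\Phi^{-1}(y))\eta/\abs{\prescript{t}{}{\Phi}^\prime(\Phi^{-1}(y))\eta}\bigr)$ is a bijection $S^\ast V\rightarrow S^\ast U$. The chain rule gives $\prescript{t}{}{(\Psi\circ\Phi)}^\prime(x)=\prescript{t}{}{\Phi}^\prime(x)\,\prescript{t}{}{\Psi}^\prime(\Phi(x))$, and the normalisations compose because normalisation commutes with positive scaling. Hence $\Phi^\ast\Psi^\ast E=E$ for all $E\subseteq S^\ast U$, and monotonicity is immediate. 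Combining the two inclusions gives $\Phi^\ast(\WFR u)=\WFR\Phi^\ast u$.
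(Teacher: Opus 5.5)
Your argument is correct and is exactly the intended one. The paper gives no written proof here; it treats the equality as an immediate consequence of applying \ref{UltraWFInv0} to $\Phi$ and to $\Phi^{-1}$, just as it derives \ref{WFAnaDiff} from \ref{WFanalMap}. The two points you check, $N_\Phi=\emptyset$ and the functoriality of the set-level pullback, are precisely what that argument needs. You are also right that the statement should read $u\in\Distr[V]{n}$.
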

\begin{Rem}
    Using Theorem \ref{UltraDiffeoWF} we are able to define $\WFR u$ for
    distributions $u\in\Distr[M]{n}$ and distributional sections $u\in\Distr[M]{E}$.
    Therefore, given a differential operator $\Distr[M]{E}\rightarrow\Distr[M]{F}$
    of class $\sheaf{}$, where $E,F$ are vector bundles of the same rank, then
    \ref{VectorMicroEll} gives
    that
    \begin{equation*}
        \WFR u\subseteq\WFR Pu\cup\Char P\qquad \fa u\in\Distr[M]{E}.
    \end{equation*}
\end{Rem}

\begin{Rem}
    If $\sheaf{}$ is a microlocal sheaf then following the lines of Theorem \ref{MicroEllThm1} we could prove \ref{VectorMicroEll} (for at least scalar operators) if we knew that an elliptic differential operator of class
    $\sheaf{}$ has an ultradifferentiable parametrix. However, this would apparently require
    to use the defining data of the sheaf. In the Denjoy-Carleman category a comparison of the conditions on the weight sequence needed in
    the construction of the parametrix in \cite{Matsumoto} with the conditions
    used in \cite{HoeBook1} to prove properties \ref{WFComparision}, \ref{WFproj}
    and \ref{WFTensor} indicates that there is a fundamental difference between
    \ref{VectorMicroEll} and \emph{\ref{AnalEllWF}}.
    In fact, the conditions required in \cite{Matsumoto}  mean   that the class is non-quasianalytic and that it
    is contained in some Gevrey class, cf.~\cite[Proposition 2.6]{Matsumoto},  whereas 
    the conditions used in \cite{HoeBook1} amount to the semiregularity of the class under consideration.
    
    In the notions of Section \ref{Section5}: The weight sequence in \cite{Matsumoto}
    is non-quasianalytic and essentially normal\footnote{It does not need to be log-convex if we do not consider operators on manifolds.}, cf.~Definition \ref{NormalMatrix} and the remarks afterwards, whereas
    the sequence in \cite{HoeBook1} only needs to be semiregular, cf.~Definition \ref{SemiMatrix}).
\end{Rem}
\begin{Def}
    A regular sheaf $\sheaf{}$ is weakly normal if it is also 
    weakly microlocal and
    satisfies \emph{\ref{UltraWFinvar}} and
    \begin{enumerate}[label=(N\arabic*$^\prime$),start=2]
        \item\label{ScalarUltraEll} If $P$ is a differential operator with coefficients in $\sheaf[U]{n}$ then
        \begin{equation*}
            \WFR u\subseteq\WFR Pu\cup \Char P\qquad \fa u\in\Distr[U]{n}.
        \end{equation*}
    \end{enumerate}
\end{Def}
\subsection{Uniqueness theorems for quasianalytic operators} 
In this section we assume that $\sheaf{}$ is a normal sheaf which satisfies 
the following microlocal uniqueness property
\begin{enumerate}[label=(N\arabic*),start=3]
    \item\label{QuasiWF} Let $I\subseteq\R$ be an open interval and $u\in\Distr[I]{1}$.
    If $x_0\in\partial\supp u$ then $(x_0,\pm 1)\in\WFR u$.
\end{enumerate}
Observe that a microlocal sheaf $\sheaf{}$ which satisfies \ref{QuasiWF} is automatically quasianalytic, since if $\sheaf{}$ is non-quasianalytic then
any non-zero $u\in\sheaf[I]{1}\cap\D_1(I)$ violates \ref{QuasiWF}.
Adapting the proof of H\"ormander in the analytic setting \cite[Theorem 8.5.6]{HoeBook1}
cf.~\cite[Theorem 7.2]{Fuerdoes20} for the Denjoy-Carleman version, we can extend
\ref{QuasiWF} to the higher dimensional case.
\begin{Thm}\label{QuasiWFThm1}
Let $\Omega\subseteq\R[n]$, $u\in\Distr[\Omega]{n}$, $x_0\in\supp u$ and $f:\Omega\rightarrow \R$ be a function of class $\sheaf{}$ such that
$df(x_0)\neq 0$ and $f(x)\leq f(x_0)$ for $x_0\neq x\in\supp u$.
Then
\begin{equation*}
   \left(x_0,\pm \frac{df(x_0)}{\abs{df(x_0)}}\right)\in \WFR u.
\end{equation*}
\end{Thm}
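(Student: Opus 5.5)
We adapt Hörmander's proof of \cite[Theorem 8.5.6]{HoeBook1}, cf.~\cite[Theorem 7.2]{Fuerdoes20}: reduce by a change of coordinates of class $\sheaf{}$ to the case where $f$ is a linear coordinate function, and then reduce the $n$-dimensional statement to the one-dimensional axiom \ref{QuasiWF} by pushing $u$ forward along that coordinate, using \ref{WFproj}.

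\emph{Step 1 (straightening $f$).} Since $df(x_0)\neq 0$, after a linear change of coordinates we have $\partial f/\partial x_n(x_0)\neq 0$. The map
\[
F(x)=(x_1,\dotsc,x_{n-1},f(x)-f(x_0))
\]
is of class $\sheaf{}$ and has invertible Jacobian at $x_0$. By \ref{InverseThm} it is a diffeomorphism of class $\sheaf{}$ near $x_0$. By Theorem \ref{UltraDiffeoWF}, \ref{UltraWFinvar} transforms $\WFR$ equivariantly, and $\Phi^\ast$ maps $(F(x_0),\pm e_n)$ to $(x_0,\pm df(x_0)/\abs{df(x_0)})$. Since the statement is local (sheaf property of $\WFR$), we may assume $x_0=0$, $f(x)=x_n$, and $x_n<0$ for $0\neq x\in\supp u$ near $0$. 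We then argue by contradiction: suppose $(0,\varepsilon e_n)\notin\WFR u$ for a fixed sign $\varepsilon$. The other sign follows by applying the argument to $\bar u$ and using \ref{WFInvolution}.

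\emph{Step 2 (pushing forward to one dimension).} The boundary of $\supp u$ touches $\{x_n=0\}$ only at $0$ near $0$, so we can localize without creating new singularities in direction $\varepsilon e_n$. Take $\chi\in\D$ with $\chi=1$ near $0$ and $v=\chi u\in\E'$; by Corollary \ref{WFCor1}, $\WFR v\subseteq\WFR u$. Choose $\chi$ with support so small that no $(x,\varepsilon e_n)$ with $x\neq 0$, $x\in\supp v$, lies in $\WFR v$. Closedness of $\WFR u$ and openness of the complement only give this near $0$, so we must also handle the other points of $\supp v$.

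\emph{Step 3 (analytic bending, expected main obstacle).} One would like to push forward along $x\mapsto x_n$, but every point of $\supp v$ contributes its $(x,\pm e_n)$ directions and some may lie in $\WFR u$. Hörmander's remedy, which I would follow, is to first apply a real-analytic change of variables, e.g.
\[
x_n\mapsto x_n+c\abs{x'}^2 ,
\]
allowed by \ref{WFAnaDiff}, so that the image of $\supp v$ lies in a region where $x_n\le -c'\abs{x'}^2$. Then the only point of the support on the level $x_n=0$ is the origin, and for small $\delta$ the slab $-\delta<x_n\le 0$ meets $\supp v$ only in a small neighborhood of $0$, where $(x,\varepsilon e_n)\notin\WFR v$ by closedness. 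We cut off with $\psi(x_n)$ supported in $(-\delta,\delta)$; the cutoff error is supported where $x_n\le -\delta$ and so does not affect the point $t=0$. Taking $\varphi=1$ on the $x'$-projection and applying \ref{WFproj} (after permuting variables) to $w=\langle v(\cdot,t),1\rangle$, a distribution in $t$, gives $(0,\varepsilon)\notin\WFR w$.

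\emph{Step 4 (contradiction).} Meanwhile $\supp w\subseteq(-\infty,0]$. The main technical point, in which positivity is preserved after bending, is to check that $w\neq 0$ near $0$, so that $0\in\partial\supp w$. I would ensure this by replacing $u$ with $gu$ for suitable $g$, e.g.\ polynomial multiples $x'^\alpha$ (allowed by Corollary \ref{WFCor1}), since not all moments $\langle v,x'^\alpha\otimes\cdot\rangle$ can vanish near $0$ when $0\in\supp v$. Then \ref{QuasiWF} gives $(0,\pm1)\in\WFR w$, a contradiction.
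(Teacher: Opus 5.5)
Your strategy is the paper's (H\"ormander's): isolate the contact point with a quadratic correction, straighten $f$ to $x_n$, push forward to the $x_n$-line via \ref{WFproj} (the paper uses Proposition \ref{WFProp2}), apply \ref{QuasiWF}, and recover $u$ from its moments in $x'$ by density (you use polynomials, the paper entire functions $A(x')$). However, Step~2 contains a false claim. Corollary \ref{WFCor1} requires the multiplier to be of class $\mathcal{R}$. In this section $\mathcal{R}$ is quasianalytic, which is forced by \ref{QuasiWF}, so no nonzero $\chi\in\D$ belongs to $\mathcal{R}$. Hence $\WFR(\chi u)\subseteq\WFR u$ fails in general: for $u\equiv1$ we have $\WFR u=\emptyset$, but $\chi u=\chi\notin\mathcal{R}$, so $\WFR(\chi u)\neq\emptyset$ by \ref{WFSing}. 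For the same reason you cannot choose $\chi$ so that no $(x,\varepsilon e_n)$ with $0\neq x\in\supp v$ lies in $\WFR v$, because $\chi u$ acquires new $\mathcal{R}$-singularities wherever $\supp u$ meets the set where $\chi\not\equiv1$.

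What actually rescues Step~3 is locality of $\WFR$. After the bending, compactness gives $\delta>0$ such that $\supp u\cap\supp\chi\cap\{\abs{x_n}\leq\delta\}$ lies in the interior of $\{\chi=1\}$. On this slab $v$ therefore coincides with $u$ near every point of its support, and $\WFR v\subseteq\WFR u$ there by the sheaf property. The paper builds this in from the start by choosing $V\times I$ with $\supp u\cap(\partial V\times I)=\emptyset$, so $u$ is never multiplied by a non-$\mathcal{R}$ function in the $x'$-directions. Likewise, the strict inequality you assume in Step~1, and the claim in Step~2 that $\supp u$ meets $\{x_n=0\}$ only at $0$, are unavailable until after the bending. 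The paper does the bending first, replacing $f$ by $f-\abs{x-x_0}^2$.

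Step~4 also needs repair. Nothing guarantees $0\in\partial\supp w$, and the moment argument only works if all $w_\alpha$ vanish on a \emph{common} neighbourhood of $0$; vanishing on $\alpha$-dependent neighbourhoods does not force $u=0$ near $0$. Argue as the paper does. Polynomials lie in $\mathcal{R}$ by (U1), so here Corollary \ref{WFCor1} does apply and gives $\WFR(x'^\alpha u)\subseteq\WFR u$; also $\supp(x'^\alpha u)\subseteq\supp u$. Hence the interval $J\ni0$ on which $(t,\varepsilon)\notin\WFR w_\alpha$ can be taken independent of $\alpha$. Since $w_\alpha$ vanishes on $J\cap\{t>0\}$, \ref{QuasiWF} shows that $\supp w_\alpha\cap J$ has no boundary point in $J$, so $w_\alpha=0$ on $J$ for every $\alpha$. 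Density of polynomials in $\mathcal{E}(\mathbb{R}^{n-1})$ then gives $u=0$ near $0$, contradicting $0\in\supp u$. With these corrections your argument is the paper's proof.
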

\begin{proof}
Replacing $f$ by the function $f(x)-\abs{x-x_0}^2$ of class $\sheaf{}$ we can assume
that $f(x)<f(x_0)$ for $x_0\neq x\in\supp u$.
Moreover, since $\sheaf{}$ is a normal sheaf, i.e.~is in particular regular
and satisfies \ref{UltraDiffeoWF}, and $df(x_0)$ we can change coordinates
such that $x_0=0$ and $f(x)=x_n$, possibly shrinking $\Omega$.

We continue by choosing a neighborhood $V$ of $0\in\R[n-1]$ such that
$V\times\{0\}\Subset \Omega$. 
The hypothesis implies that $\supp u\cap \overline{V}\times \{0\}=\{0\}$.
Hence we can find an open interval $I$ containing $0\in\R$ such that
\begin{equation}\label{Uniq1}
    V\times I\Subset \Omega\qquad \&\qquad \supp u\cap \bigl(\partial V\times I\bigr)=\emptyset.
\end{equation}
If $A$ is an entire real-analytic function in the variables 
$x^\prime=(x_1,\dotsc,x_{n-1})\in\R[n-1]$ we define a distribution $u_A\in\Distr[I]{1}$ 
by setting $\langle u_A,\psi\rangle=\langle u,A\otimes\psi\rangle$ 
which is well-defined because of \eqref{Uniq1}. Proposition \ref{WFProp2} implies that
\begin{equation*}
\WFR u_A\subseteq\Set*{(x_n,\xi_n)\in S^\ast I\given \ex x^\prime\in V:\;(x^\prime,x_n,0,\xi_n)\in\WFR u }.    
\end{equation*}
We may note that on the right-hand side above that $(x^\prime,x_n)$ has to be close
to the origin for $x_n$ small.

If we assume that e.g. $(0,e_n)\notin\WFR u$, with $e_n=(0,\dotsc,0,1)\in\R[n]$,
then we can choose $I$ such that $(x,e_n)\notin\WFR u$ for $x\in V\times I$.
It follows that $(x_n,1)\notin\WFR u_A$ for $x_n\in I$ and \ref{QuasiWF} gives that
$U_A=0$ since  $u_A=0$ on the halfspace $\{x_n>0\}$. That means that
\begin{equation*}
    \Bigl\langle u\vert_{V\times I},A\otimes\psi\Bigr\rangle=0
\end{equation*}
for all $A$ entire real-analytic functions and every $\psi\in\D_1(I)$. Since the space of entire real-analytic functions is dense in $\E_{n-1}(\R[n-1])$ we obtain that $u=0$ in $V\times I$
\end{proof}
Following H\"ormander's arguments in \cite{HoeBook1}, see also \cite{Fuerdoes20},
we can prove a $\sheaf{}$-version of Holmgren's Theorem (note that $\sheaf{}$ is
assumed to be quasianalytic in this section).
\begin{Def}[cf.~{\cite[Definition 8.5.7]{HoeBook1}}]
Assume that $F$ is a closed subset of a $\mathcal{C}^2$-manifold $X$ then the 
exterior normal set $N_e(F)\subseteq S^\ast X$ is defined as the set of all
$(x_0,\xi_0)\in S^\ast X$ such that $x_0\in F$ and there is a real-valued function
$f\in\mathcal{C}^2(X)$ with $df(x_0)=\xi_0$ and $f(x)\leq f(x_0)$ for $x\in F$.
\end{Def}
Note that by the remarks in \cite[p.~300]{HoeBook1} $f$ has only to be defined
near $x_0$ and may be chosen to be real-analytic. Clearly if $(x_0,\xi_0)\in N_e(F)$ then $x_0\in\partial F$. In fact, if $\pi: S^\ast X\rightarrow X$ is the
canonical projection then according to \cite[Proposition 8.5.8]{HoeBook1} 
$\pi(N_e(F))$ is dense in $\partial F$ and if $g\in\mathcal{C}^1(X)$ with
$dg(x_0)=\xi_0\neq 0$ and $g(x)\leq g(x_0)$ for $x\in F$ then
\begin{equation*}
    \left(x_0,\frac{\xi_0}{\abs{\xi_0}}\right)\in \overline{N_e(F)},
\end{equation*}
since we can approximate $g$ by smooth functions in $\mathcal{C}^1$.
Moreover the interior normal set of $F$ is given by $N_i(F)=\Set{(x,\xi)\in S^\ast X\given (x,-\xi)\in N_e(F)}$ and thus we call $N(F)$ the normal set of $F$.
Then Theorem \ref{QuasiWFThm1} can be reformulated:
\begin{Thm}\label{QuasiWFThm2}
Let $\Omega\subseteq\R[n]$. Then
\begin{equation*}
    \overline{N(\supp u)}\subseteq\WFR u\qquad \fa u\in\Distr[\Omega]{n}.
\end{equation*}
\end{Thm}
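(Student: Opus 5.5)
Since $\WFR u$ is closed in $S^\ast\Omega$ by definition of $\pot[\Omega]{n}$, it suffices to show $N(\supp u)\subseteq\WFR u$; taking closures then gives the statement. Moreover, by \ref{WFInvolution} we have $\WFR\bar u=\sigma(\WFR u)$ and $\supp\bar u=\supp u$. So once we know $N_e(\supp u)\subseteq\WFR v$ for every $v\in\Distr[\Omega]{n}$, applying this to $\bar u$ handles the interior normals. In fact Theorem \ref{QuasiWFThm1} already produces both signs $\pm$, so $N_i$ comes for free. The task therefore reduces to showing $(x_0,\xi_0)\in\WFR u$ for every $(x_0,\xi_0)\in N_e(F)$, where $F=\supp u$.

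Fix $(x_0,\xi_0)\in N_e(F)$. By definition there is $f\in\mathcal{C}^2$ with $df(x_0)=\xi_0\neq 0$ and $f(x)\le f(x_0)$ on $F$. By the remark after the definition (\cite[p.~300]{HoeBook1}), $f$ may be taken to be defined only near $x_0$ and real-analytic. I would recheck that remark: one replaces $f$ by its second-order Taylor polynomial minus $C\abs{x-x_0}^2$. This gives a real-analytic $g$ with $g\le f$ near $x_0$, $g(x_0)=f(x_0)$ and $dg(x_0)=\xi_0$, so $g(x)\le g(x_0)$ on $F$ near $x_0$. By \ref{AnalyticIncl}, $g$ is a real-valued function of class $\sheaf{}$ on a small neighborhood $\Omega'$ of $x_0$.

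Since $\WFR$ is a sheaf morphism, $\WFR(u\vert_{\Omega'})=\WFR u\cap S^\ast\Omega'$ and $\supp(u\vert_{\Omega'})=F\cap\Omega'$. Theorem \ref{QuasiWFThm1}, applied on $\Omega'$ to $u\vert_{\Omega'}$ with the function $g$, then yields $(x_0,\pm\xi_0/\abs{\xi_0})\in\WFR u$. The normalization is harmless because $N_e$ is a subset of $S^\ast X$, so $\xi_0$ is taken up to positive scaling. Finally we take the closure, using that $\WFR u$ is closed.

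The main obstacle is the reduction to a function of class $\sheaf{}$. The definition of $N_e$ only supplies a $\mathcal{C}^2$ function, while Theorem \ref{QuasiWFThm1} demands one of class $\sheaf{}$. This is exactly where the real-analytic replacement of $f$ together with \ref{AnalyticIncl} is needed, and I would verify it carefully, including the strict inequality off $x_0$ obtained from the quadratic correction. A secondary check is that localization is compatible with the hypothesis $x_0\in\supp u$. This holds because $(x_0,\xi_0)\in N_e(F)$ forces $x_0\in F$.
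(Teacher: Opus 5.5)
Your proposal is correct and follows the paper's own route. The paper presents Theorem~\ref{QuasiWFThm2} as a direct reformulation of Theorem~\ref{QuasiWFThm1}, using two facts: H\"ormander's remark that the test function in the definition of $N_e$ may be taken local and real-analytic (hence of class $\sheaf{}$ by \ref{AnalyticIncl}), and the closedness of $\WFR u$. Your Taylor-polynomial construction and your localization via the sheaf property simply make these implicit steps explicit.
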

Combining Theorem \ref{QuasiWFThm2} with \ref{ScalarUltraEll} gives
that
\begin{Cor}
    Let $P$ be a differential operator of class $\sheaf{}$ in $\Omega$ and
    $u\in\Distr[\Omega]{n}$ such that $Pu=0$ in $\Omega$. Then
    \begin{equation*}
        \overline{N(\supp u)}\subseteq\Char P.
    \end{equation*}
\end{Cor}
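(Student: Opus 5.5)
The plan is to chain Theorem \ref{QuasiWFThm2} with the scalar microlocal elliptic regularity \ref{ScalarUltraEll}. The latter is available because $\sheaf{}$ is assumed normal throughout this subsection; for scalar operators \ref{VectorMicroEll} contains \ref{ScalarUltraEll}.

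First, since $Pu=0$ in $\Omega$, we have $\WFR Pu=\WFR 0=\emptyset$. Property \ref{WFComparision} gives $\WFR 0\subseteq\WF_A 0=\emptyset$; alternatively, \ref{WFSing} gives $\pi(\WFR 0)=\singsupp_{\sheaf{}}0=\emptyset$. Then \ref{ScalarUltraEll} (equivalently \ref{VectorMicroEll} with $m=1$) yields
\begin{equation*}
\WFR u\subseteq\WFR Pu\cup\Char P=\Char P .
\end{equation*}

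Next, Theorem \ref{QuasiWFThm2} gives $\overline{N(\supp u)}\subseteq\WFR u$, and combining the two inclusions proves the claim. Note that $N(\supp u)=N_e(\supp u)\cup N_i(\supp u)$. The interior part is covered because Theorem \ref{QuasiWFThm1} produces both directions $\pm df(x_0)/\abs{df(x_0)}$. The closure needs no separate argument, since $\WFR u$ is closed: it is an element of $\pot[\Omega]{n}$, so it contains the closure of any of its subsets.

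The only step requiring care is applying Theorem \ref{QuasiWFThm2} itself. The definition of $N_e$ uses a $\mathcal{C}^2$ test function $f$, whereas Theorem \ref{QuasiWFThm1} requires $f$ of class $\sheaf{}$. I will use the remark after the definition, citing \cite[p.~300]{HoeBook1}: $f$ need only be defined near $x_0$ and may be taken real-analytic, hence of class $\sheaf{}$ by \ref{AnalyticIncl}. This is exactly the reformulation already asserted in Theorem \ref{QuasiWFThm2}, which may be assumed, so the corollary reduces to the two-line combination above.
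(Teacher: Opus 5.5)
Your proposal is correct and matches the paper's argument: the paper also chains Theorem \ref{QuasiWFThm2}, $\overline{N(\supp u)}\subseteq\WFR u$, with \ref{ScalarUltraEll}, which holds for normal sheaves as the scalar case of \ref{VectorMicroEll}, using $\WFR Pu=\WFR 0=\emptyset$. Your additional remarks (closedness of $\WFR u$, both signs coming from Theorem \ref{QuasiWFThm1}, and the $\mathcal{C}^2$ versus real-analytic test function) are accurate, but they are already contained in Theorem \ref{QuasiWFThm2} as stated.
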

    In order to formulate the quasianalytic Holmgren Theorem we recall
    that a $\mathcal{C}^1$-hypersurface $X\subseteq\Omega$ is characteristic
    at some point $x\in X$ for a linear differential operator $P$ in $\Omega$
    if there is a defining function $\varphi$ of $X$ near $x$ such that
    $(x, d\varphi(x))\in\Char P$.
\begin{Thm}\label{QuasiHolmgren}
    Let $P$ be a differential operator of class $\sheaf{}$ in $\Omega$.
    If $X\subseteq\Omega$ is a non-characteristic hypersurface at $x_0$
    and $u\in\Distr[\Omega]{n}$ is a solution of $Pu=0$ which vanishes on one side
    of $X$ near $x_0$ then $u$ vanishes on a full neighborhood of $x_0$.
\end{Thm}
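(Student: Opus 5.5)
We argue by contradiction along the lines of H\"ormander's proof of Holmgren's theorem \cite[Theorem 8.6.5]{HoeBook1}, cf.~\cite{Fuerdoes20}, using the Corollary preceding the statement, i.e.~$\overline{N(\supp u)}\subseteq\Char P$ whenever $Pu=0$. Since $\Char P$ is closed and $X$ is non-characteristic at $x_0$, $X$ is non-characteristic at every point near $x_0$. Shrinking $\Omega$, we take a defining function $\varphi$ of $X$ near $x_0$ with $d\varphi\neq 0$, with $X=\{\varphi=0\}$, and with $u=0$ on $\{\varphi>0\}$. Shrinking further, we may assume that $(x,\pm d\varphi(x)/\abs{d\varphi(x)})\notin\Char P$ for all $x$ in a compact neighborhood $K$ of $x_0$. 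By openness of the complement of $\Char P$, the same holds for the conormals of every $\mathcal{C}^2$ hypersurface whose normal stays close to $d\varphi$.

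The main step, and the one I expect to be the main obstacle, is to construct a family of ``lens'' surfaces that sweep out a neighborhood of $x_0$. As in H\"ormander's proof, after a linear change of coordinates we may take $x_0=0$ and $d\varphi(0)=e_n$. We then consider
\begin{equation*}
\psi_t(x)=\varphi(x)+\epsilon\abs{x}^2-t,\qquad t\geq 0 \text{ small},
\end{equation*}
or, more conveniently, the function $g(x)=x_n+\epsilon\abs{x^\prime}^2$ after straightening $X$ by a smooth change of coordinates. Only $\mathcal{C}^2$ test functions enter $N_e$, so no $\sheaf{}$-regularity of the change of coordinates is needed here; $\Char P$ transforms as a closed set. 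For small $\epsilon$ the normals $dg$ stay in a small conic neighborhood of $e_n$, hence away from $\Char P$ on $K$. We set $\Omega_t=\{x\in K\given g(x)\geq -t\}$. For $t\leq t_0$ with $t_0$ small, the set $\Omega_t\cap\{x_n\leq 0\}$ is compact and stays inside the interior of $K$, except where it lies in $\{x_n>0\}$, and there $u=0$ anyway.

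Let $t^\ast=\sup\{t\in[0,t_0]\given u=0\text{ near }\Omega_t\cap\{x_n\leq 0\}\}$. Note that $u=0$ in $\{x_n>0\}$, so $\supp u\subseteq\{x_n\leq 0\}$. Suppose $\supp u$ meets $\Omega_{t^\ast}$. The case $t^\ast=0$ is included: then the point $0$ itself may lie in $\supp u$. By compactness there is a point $y\in\supp u$ at which $-g$ attains its maximum over $\supp u\cap\Omega_{t^\ast}$, and this maximum equals $t^\ast$, with $y$ in the interior of $K$. Then $-g(x)\leq -g(y)$ for $x\in\supp u$ near $y$, so by definition of the exterior normal set $(y,-dg(y)/\abs{dg(y)})\in N_e(\supp u)\subseteq N(\supp u)$. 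The Corollary then gives $(y,-dg(y)/\abs{dg(y)})\in\Char P$. This contradicts our choice of $\epsilon$ and $K$, since the characteristic set is invariant under $\xi\mapsto-\xi$ for homogeneous principal symbols. Hence $u$ vanishes near $\Omega_{t^\ast}$. By closedness, $u$ vanishes on $\Omega_t$ for $t$ slightly larger, which contradicts maximality unless $t^\ast=t_0$.

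Finally, $\Omega_{t_0}$ contains a full neighborhood of $0=x_0$, since $g(0)=0>-t_0$. Thus $u$ vanishes near $x_0$. The remaining care lies in checking that the extremal point $y$ cannot escape to $\partial K$. This is ensured by choosing $t_0\ll\epsilon\,\mathrm{dist}(0,\partial K)^2$, so that $\{g\geq -t_0,\ x_n\leq 0\}$ is confined to $\abs{x^\prime}^2\leq t_0/\epsilon$.
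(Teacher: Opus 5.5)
\textbf{There is a genuine gap as written, and the lens construction is unnecessary.} Your overall idea is right: apply the preceding Corollary at a point of $\supp u$ where a test function with non-characteristic differential touches $\supp u$. The sweep you build to do this fails at three concrete points.
\begin{enumerate}
\item \emph{Sign of $g$.} With $g=x_n+\epsilon\abs{x^\prime}^2$, the set $\{g\geq -t\}\cap\{x_n\leq 0\}=\{-t-\epsilon\abs{x^\prime}^2\leq x_n\leq 0\}$ is a slab along $\{x_n=0\}$ that reaches $\partial K$. So it is not compact in the interior of $K$, and your final confinement $\abs{x^\prime}^2\leq t_0/\epsilon$ fails. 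That confinement holds only for $g=x_n-\epsilon\abs{x^\prime}^2$.
\item \emph{Direction of the inequality at $y$.} Points of $\supp u$ near $y$ lying outside $\Omega_{t^\ast}$ satisfy $-g>t^\ast\geq -g(y)$. Hence maximizing $-g$ over $\supp u\cap\Omega_{t^\ast}$ does not make $y$ a local maximum of $-g$ on $\supp u$, and your claim $-g(x)\leq -g(y)$ is false in general. The true inequality is $g(x)\leq g(y)$ for $x\in\supp u$ near $y$. It comes from $u=0$ on $\{g>-t^\ast\}$ (the definition of $t^\ast$), not from the maximization, and it gives $(y,dg(y)/\abs{dg(y)})\in N_e(\supp u)$. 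The symmetry of $\Char P$ then makes the sign of the normal harmless.
\item \emph{Straightening a $\mathcal{C}^1$ hypersurface.} The paper allows $X$ to be only $\mathcal{C}^1$, and such an $X$ cannot be straightened by a smooth change of coordinates. A $\mathcal{C}^1$ straightening turns your $\mathcal{C}^2$ test function into one that is only $\mathcal{C}^1$ in the original coordinates, so no element of $N_e$ is produced. In exactly the case where extra work would be needed, you are forced back to a closure statement anyway.
\end{enumerate}

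The step you call the main obstacle does not exist; following H\"ormander, the theorem is immediate from the Corollary $\overline{N(\supp u)}\subseteq\Char P$.
\begin{itemize}
\item Suppose $x_0\in\supp u$. Then $\varphi(x)\leq 0=\varphi(x_0)$ for all $x\in\supp u$ near $x_0$.
\item So the defining function $\varphi$ itself exhibits $(x_0,d\varphi(x_0)/\abs{d\varphi(x_0)})$ as an exterior normal. It lies in $N_e(\supp u)$ if $\varphi\in\mathcal{C}^2$.
\item If $\varphi$ is only $\mathcal{C}^1$, it lies in $\overline{N_e(\supp u)}$ by the remark following the definition of $N_e$ (H\"ormander's Proposition 8.5.8). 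This is precisely why the Corollary is stated for the closure $\overline{N(\supp u)}$.
\item Hence $(x_0,d\varphi(x_0)/\abs{d\varphi(x_0)})\in\Char P$, contradicting the non-characteristic hypothesis. So $x_0\notin\supp u$.
\end{itemize}

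Even inside your own framework, once the sign is corrected, no continuity argument in $t$ is needed. A single maximization of $g$ over the compact set $\supp u\cap\{g\geq -t_0,\,x_n\leq 0\}$ already yields a point where $g$ attains a local maximum on $\supp u$.
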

Obviously we can formulate Theorem \ref{QuasiHolmgren} for differential operators defined on manifolds of class $\sheaf{}$ and, using \ref{VectorMicroEll}, we
can also consider differential operators acting on sections of vector bundles,
cf.~\cite[Remark 7.8]{Fuerdoes20}.

Following \cite[Section 7]{Fuerdoes20} we could continue with proving
extensions of uniqueness statements for analytic differential operators from
e.g.~\cite{Bony69}, \cite{Bony76}, \cite{Zachm69} and \cite{Zachm72}.
However this would require a deep dive in the geometric theory of linear differential operators which is beyond the scope of this article.
Here we only formulate the following statement for sum of squares operators,
which proof can easily be adapted from the proof of \cite[Corollary 7.15]{Fuerdoes20}, using Theorem \ref{NaganoThm}.
Recall, cf.~\cite{Hoe67}, that a collection $Q_1,\dotsc,Q_N$ of real-valued vector 
fields defined on $\Omega\subseteq\R[n]$ is of finite type if the Lie algebra generated by $Q_1,\dotsc,Q_N$ span $\R^n$ at each point of $\Omega$.
\begin{Thm}
    Suppose that $Q_1,\dotsc,Q_N$ are a collection of real vector fields of class $\sheaf{}$, which are of finite type in an open connected set 
    $\Omega\subseteq\R[n]$. Moreover set $P=\sum_{j=1}^N Q_j^2$.
    If $u\in\Distr[\Omega]{n}$ is a solution of $Pu=0$ which vanishes on
    an open subset of $\Omega$ then $u\equiv 0$ in $\Omega$.
\end{Thm}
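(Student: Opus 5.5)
The plan is to adapt the proof of \cite[Corollary 7.15]{Fuerdoes20} to the present setting. First we show that the support of a solution can only be a union of integral manifolds of the Lie algebra generated by the $Q_j$. Finite type then forces these integral manifolds to be open, and connectedness finishes the argument.

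\emph{Step one: the characteristic set.} The principal symbol of $P$ is $p_2(x,\xi)=-\sum_j q_j(x,\xi)^2$, where $q_j(x,\xi)=\langle Q_j(x),\xi\rangle$ is real, since the $Q_j$ are real vector fields. Hence
\begin{equation*}
\Char P=\Set*{(x,\xi)\in S^\ast\Omega\given \langle Q_j(x),\xi\rangle=0,\ j=1,\dotsc,N}.
\end{equation*}
Let $F=\supp u$. The Corollary following Theorem \ref{QuasiWFThm2} gives $\overline{N(F)}\subseteq\Char P$. So every exterior or interior normal $(x_0,\xi_0)$ of $F$ is annihilated by all the $Q_j(x_0)$.

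\emph{Step two: invariance of $F$ under the flows.} We show that $F$ is invariant under the local flows $\Fl_{Q_j}$ and $\Fl_{-Q_j}$. For this we use the classical lemma of Bony (see \cite{Bony69}, also used in \cite{Fuerdoes20}): a closed set $F$ is invariant under the flow of a Lipschitz vector field $X$ if $\langle X(x),\xi\rangle=0$ for every $(x,\xi)\in N_e(F)$. This lemma is purely geometric and needs only the $\mathcal{C}^1$ regularity of $Q_j$. Consequently $F$ is invariant under the flows of every element of the Lie algebra $\mathfrak{g}$ generated by $Q_1,\dotsc,Q_N$. Indeed, the set of vector fields under whose flows $F$ is invariant is closed under brackets, because $[X,Y]$ is a limit of compositions of flows. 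Alternatively, each orbit of the family $\{Q_j\}$ stays in $F$.

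\emph{Step three: Nagano and connectedness.} By Theorem \ref{NaganoThm}, applied to $\mathfrak{g}$ (whose elements are of class $\sheaf{}$ because $\sheaf{}$ is closed under derivation and multiplication), through every point of $\Omega$ passes an integral manifold of $\mathfrak{g}$. Each integral manifold coincides with the orbit of the $Q_j$ and is contained in any closed set invariant under the flows. Finite type means $\mathfrak{g}_p=T_p\Omega$ at every $p$, so every integral manifold is open in $\Omega$. Since the integral manifolds partition $\Omega$ and $\Omega$ is connected, there is exactly one of them, namely $\Omega$ itself. Now $u$ vanishes on a nonempty open set, so $F\neq\Omega$. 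If $F$ were nonempty, invariance would give $\Omega\subseteq F$, a contradiction. Therefore $F=\emptyset$, i.e.\ $u\equiv 0$.

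\emph{Main obstacle.} The hard step is step two. One must justify that $\overline{N(F)}\subseteq\Char P$ (which controls only normals in the closure sense) implies invariance under the flows. Bony's theorem handles exactly this via the exterior normal condition, and it applies without any analyticity. The only place quasianalyticity enters is through Theorem \ref{QuasiWFThm2}, i.e.\ through \ref{QuasiWF} and the uniqueness Corollary. Once step two is in hand, Theorem \ref{NaganoThm} replaces the analytic Nagano theorem verbatim.
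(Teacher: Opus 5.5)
Your proposal is correct and follows the route the paper indicates when it says the proof adapts \cite[Corollary 7.15]{Fuerdoes20}. That route runs as follows: the corollary after Theorem \ref{QuasiWFThm2} gives $\overline{N(\supp u)}\subseteq\Char P$; Bony's invariance theorem then makes $\supp u$ invariant under the flows of the $Q_j$, since $\langle Q_j(x),\xi\rangle$ vanishes on $\Char P$; finally Theorem \ref{NaganoThm}, finite type and connectedness finish the argument. A minor remark: once you have invariance under the flows of every element of $\mathfrak{g}$, finite type already makes $\supp u$ open (compose the flows of $n$ fields in $\mathfrak{g}$ that span $T_p\Omega$ and apply the inverse function theorem), so you do not actually need the Nagano theorem.
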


\section{Applications to CR theory}\label{Section4}
\subsection{Ultradifferentiable CR manifolds}
We may assume in this section, if not otherwise stated,
that $\sheaf{}$ is a weakly normal ultradifferentiable sheaf. 
Moreover, we recall that, if $E$ is a real vector bundle the complexified bundle $\C E=\C\otimes E$,
with fiber $\C\otimes E_p$, is a complex vector bundle.
\begin{Def}
    An involutive, or formally integrable structure $(M,\crb)$ of class $\sheaf{}$ consists of a manifold $M$
    and a subbundle $\crb\subseteq\C TM$, such that the Lie bracket of two sections of $\crb$
    is again a section of $\crb$. (This condition is often informally written as $[\crb,\crb]\subseteq\crb$.)
\end{Def}
If $(M,\crb)$ is an involutive structure then we set $T^\prime=\crb^{\bot}=\Set{\xi\in\C T^\ast M\given \langle \xi, L\rangle =0\;\fa L\in \C TM}\subseteq\C T^\ast M$ and $T^0=T^\prime\cap T^\ast M$
is the characteristic set. We say that a distribution $u\in\Distr[M]{}$ is 
a solution of the structure $(M,\crb)$ if $Lu=0$ for all $L\in \sheaf[M]{\crb}$.
Thence $\WFR u\subseteq T^0\cap S^\ast M$ by \ref{ScalarUltraEll}.

An involutive structure $(M,\crb)$ is called (cf.~\cite[Section I.8]{BCHbook})
\begin{itemize}[label=$\triangleright$]
    \item elliptic if $T^0=\emptyset$.
    \item complex if $\crb\oplus\overline{\crb}=\C T^\ast M$.
    \item CR if $\crb\cap\overline{\crb}=\{0\}$.
    \item essentially real if $\crb=\overline{\crb}$.
\end{itemize}
It is easy to see that a complex structure is an elliptic CR structure, whereas essentially real structures are generated by real vector fields, see \cite{BCHbook}.

We are mainly interest in CR structures. Observe that if $(M,\crb)$ is a CR structure
then $T^0\subseteq T^\ast M$ is actually a real subbundle, see \cite[Proposition I.8.4]{BCHbook}.
Given a CR structure $(M,\crb)$ of class $\sheaf{}$ we denote $\dim M=N=n+m$, $\dim_{\C}\crb=n$ and $\dim T^\prime=m$. Observe that $n\leq m$.

Following \cite[pp.~17--18]{BCHbook}, cf.~\cite{LamelMirRegSections} also \cite{LamelBraun2025}, we can
find local coordinates $(x_1,\dotsc,x_n,y_1,\dotsc,y_n,s_1,\dotsc,s_d)$ defined near 
a point $p\in M$ such that $p=0$ and there are one-forms $\omega_1,\dotsc,\omega_n,\theta_1,\dotsc,\theta_d$ of class $\sheaf{}$ near $p$ such that
\begin{align*}
    \omega_j\vert_p&=dz_j\vert_p=\bigl(dx_j+idy_j\bigr)\vert_p, & j&=1,\dotsc,n\\
    \theta_k\vert_p&=ds_k\vert_p & k&=1,\dotsc,d
\end{align*}
and $\omega_1,\dotsc,\omega_n,\theta_1,\dotsc,\theta_d$ span $T^\prime$ in a neighborhood of $p$ whereas $\theta_1,\dotsc,\theta_d$ generate $T^0$ near $p$.

Moreover, there are functions $a_{jk}$ and $b_{jk}$ of class $\sheaf{}$ near $p$
vanishing at $p$ such the vector fields
\begin{equation*}
    L_j=\frac{\partial }{\partial\bar{z}_j}+\sum_{\ell=1}^n a_{j\ell}\frac{\partial}{\partial z_\ell}+\sum_{k=1}^d b_{jk}\frac{\partial}{\partial s_k},\qquad j=1,\dotsc,n,
\end{equation*}
generate $\sheaf[U]{\crb}$ in some neighborhood $U$ of $p$,
i.e.~$L_1(q),\dotsc , L_n(q)$  is  a basis for $\crb_q$, $q\in U$.
Here we used the convention
\begin{align*}
    \frac{\partial}{\partial z_j}&=\frac{1}{2}\left(\frac{\partial}{\partial x_j}-i\frac{\partial}{\partial y_j}\right)\\
    \frac{\partial}{\partial \bar{z}_j}&=\frac{1}{2}\left(\frac{\partial}{\partial x_j}+i\frac{\partial}{\partial y_j}\right).
\end{align*}

\begin{Rem}[Embedded CR manifolds]
Suppose that $M\subseteq\C[N]$ is a real manifold and note that $T_pM$ can be considered to be a real subspace of $\C^N$, if we identify $\C[N]$ with the underlying real vector
space $\R[N]$ via the map $Z=(x_j+iy_j)_{1\leq j\leq N}\mapsto (x,y)=((x_j),(y_j))$.
Given the complex structure operator $J:\C^N\rightarrow \C^N$ given by $J(Z)=\overline{Z}$,
the complex tangential space $T^c_pM= T_pM\cap JT_p M$ is the maximal complex subspace
of $T_p M$. We say that $M$ is a CR submanifold if $p\mapsto \dim_{\C} T_p^cM$
is constant. Then the CR dimension of $M$ is defined to be $\dim_{\C} T_p^c M$.
If $Z=(Z_1,\dotsc,Z_N)$ denote the (complex) coordinates in $\C^N$ then we observe that
the complexified tangent space $\C T_p\C[N]\cong\C[2N]$ can be split
in two subspaces $T^{1,0}_p$, $T^{0,1}_p$ of (complex) dimension $N$ such that
$\C T_p\C[N]=T^{1,0}_p\oplus T^{0,1}_p$ and
\begin{equation*}
    T^{1,0}_p= \spanc\Set*{\frac{\partial}{\partial Z_j}\Big\vert_p\given j=1,\dotsc, N},\quad T^{0,1}_p=\spanc\Set*{\frac{\partial}{\partial \overline{Z}_j}\Big\vert_p\given j=1,\dotsc,N}.
\end{equation*}
If we set $\crb_p=\C T_p M\cap T^{0,1}_p$ then $\real \crb_p= T_p^c M$ for all $p\in M$. 

If $M\subseteq\C[N]$ is a CR submanifold then we consider the vector bundle 
$\crb=\bigsqcup_{p\in M}\crb_p\subseteq \C TM$. It is easy to see that
$(M,\crb)$ is a CR structure in the sense above. For a far more comprehensive
presentation see \cite{BERbook}, cf.~also \cite{BCHbook}.
We refer to $(M,\crb)$ which might not to be realized as CR submanifolds as
abstract CR manifolds.

In the case of CR submanifolds $M\subseteq\C[N]$ of class $\sheaf{}$, where 
$\sheaf{}$ is a regular sheaf, we observe that we can extend
most of the results of \cite[Section 4]{FuerdoesCR} for the Denjoy-Carleman category.
We may just summarize these statements here.
A CR submanifold $M\subseteq\C[N]$ of class $\sheaf{}$ is generic if
near each $p\in M$ there is a defining function $\rho=(\rho_1,\dotsc,\rho_d): U\rightarrow \R[d]$ of class $\sheaf{}$ in a neighborhood $U\subseteq\C[N]$ of $p$
such that the complex differentials
\begin{equation*}
    \partial \rho_k=\sum_{j=1}^N\frac{\partial \rho_k}{\partial Z_j}dZ_j,\qquad k=1,\dotsc,d,
\end{equation*}
are $\C$ linearly independent near $p$, cf.~\cite[Definition 1.3.4]{BERbook}.
\begin{Prop}
    Let $M\subseteq\C[N]$ be a generic submanifold of class $\sheaf{}$ which is of CR dimension $n$ and of (real) codimension $d$. Then $N=n+d$.
    If $p\in M$ then there are holomorphic coordinates $(z,w)\in\C[n+d]$ defined near $p$
    which vanish at $p$ and a function $\varphi: U\times V\rightarrow \R[d]$
    of class $\sheaf{}$ defined in a neighborhood $U\times V\subseteq\R[2n]\times\R[d]$ of the origin such that $\varphi(0)=0$,
    $d\varphi(0)=0$ such that near $p$ the manifold $M$ is given by
    \begin{equation*}
        \imag w=\varphi(\real z,\imag z,\real w).
    \end{equation*}
\end{Prop}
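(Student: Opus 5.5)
The plan is to follow the classical argument for generic submanifolds, as in \cite[Section 1.3]{BERbook}, cf.~\cite[Section 4]{FuerdoesCR}. Everything used is either a holomorphic, hence real-analytic, change of coordinates, which is allowed by \ref{Comp1}, or an application of the implicit function theorem \emph{\ref{ImplicitFunctThm}} together with \ref{ClosedComp} and Lemma \ref{CharactMapping}.

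\textbf{Step 1: the dimension count.} Fix $p\in M$ and a defining function $\rho=(\rho_1,\dotsc,\rho_d)$ of class $\sheaf{}$ near $p$ with $\partial\rho_1,\dotsc,\partial\rho_d$ linearly independent over $\C$. Then
\begin{equation*}
\crb_p=\Set*{L\in T^{0,1}_p\given \bar\partial\rho_k(L)=0,\; k=1,\dotsc,d}.
\end{equation*}
The forms $\bar\partial\rho_k=\overline{\partial\rho_k}$ are again independent, so $\dim_{\C}\crb_p=N-d$. Since $\real\crb_p=T^c_pM$, this gives $n=N-d$.

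\textbf{Step 2: normalizing the tangent space.} After a complex affine change of coordinates, which is real-analytic and so preserves class $\sheaf{}$ by \ref{Comp1} and \ref{LinearInv}, I arrange $p=0$ and coordinates $Z=(z,w)\in\C[n]\times\C[d]$ with
\begin{equation*}
T_0M=\C[n]\times\R[d],\qquad \real w=s.
\end{equation*}
This is possible because $T_0M$ has real dimension $2n+d$ and contains the complex $n$-dimensional space $T^c_0M$. Genericity gives $T_0M+JT_0M=\C[N]$, so a complement of $T^c_0M$ in $T_0M$ is totally real of dimension $d$ and can be mapped to $\R[d]\subseteq\C[d]$.

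\textbf{Step 3: the graph representation.} In these coordinates the real Jacobian of $\rho$ with respect to $\imag w$ at $0$ is invertible, since $d\rho(0)$ annihilates exactly $T_0M=\C[n]\times\R[d]$. By \emph{\ref{ImplicitFunctThm}} there is a map $\varphi$ of class $\sheaf{}$ in $(\real z,\imag z,\real w)$ near $0$ with $\varphi(0)=0$ and
\begin{equation*}
\rho\bigl(z,\real w+i\varphi(\real z,\imag z,\real w)\bigr)=0,
\end{equation*}
and uniqueness in the implicit function theorem shows that $M$ is locally $\imag w=\varphi$. Since $T_0M=\Set{\imag w=0}$, we get $d\varphi(0)=0$. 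If the affine normalization is done only to first order, so that $d\varphi(0)$ is merely $\R$-linear, I remove it with the further linear holomorphic change $w\mapsto w-iBz$ (with a complex matrix $B$) or $w\mapsto Aw$ (with a real matrix $A$). Both are real-analytic, and composition with them stays in class $\sheaf{}$ by \ref{Comp1}.

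\textbf{Main obstacle.} The delicate point is Step 2, making sure the linear-algebra normalization really produces $T_0M=\C[n]\times\R[d]$ and not merely a real subspace of the right dimension. This rests on the genericity condition $T_0M+JT_0M=\C[N]$, which follows from the independence of the $\partial\rho_k$. The regularity bookkeeping in Step 3 is routine given \emph{\ref{ImplicitFunctThm}}.
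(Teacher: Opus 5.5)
Your proposal is correct and is the classical argument for generic submanifolds, as in \cite{BERbook}, run through the implicit function theorem \emph{\ref{ImplicitFunctThm}} and \ref{Comp1}; the paper gives no separate proof of this statement and only points to \cite[Section 4]{FuerdoesCR}, so this is the route it relies on. The one blemish is the fallback at the end of Step 3, which is unnecessary, since Step 2 already yields $T_0M=\C[n]\times\R[d]$ and hence $d\varphi(0)=0$; as written it would also not remove a linear dependence on $\real w$, because a real rescaling $w\mapsto Aw$ only conjugates such a term, and removing it would need a complex change such as $w\mapsto(I-iL)w$.
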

A CR submanifold $M\subseteq\C[N]$ is minimal at $p\in M$ if there is no
submanifold $S\subseteq M$ through $p$ such that $T_q^c M\subseteq T_q S$ for all $q\in S$ and $\dim_{\R} S<\dim_{\R} M$. 
On the other hand $M$ is of finite type at $p\in M$ if there are vector fields
$X_1,\dotsc, X_k\in \sheaf[M]{T^c M}$ such that the Lie algebra generated
by the $X_1,\dotsc, X_k$ evaluated at $p$ is isomorphic to $T_p M$.
Theorem \ref{NaganoThm} is essential to prove that in the quasianalytic category both
notions coincide:
\begin{Thm}
    Let $\sheaf{}$ be a quasianalytic regular sheaf, $M\subseteq\C[N]$ be a
    CR submanifold of class $\sheaf{}$ and $p\in M$.
    Then $M$ is minimal at $p$ if and only if $M$ is of finite type at $p$.
\end{Thm}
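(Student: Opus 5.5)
We follow the classical argument of Baouendi--Ebenfelt--Rothschild, cf.~\cite{BERbook}, with the quasianalytic Nagano theorem (Theorem \ref{NaganoThm}) replacing the analytic one. Let $\mathfrak{g}\subseteq\VFS[M]$ be the real Lie algebra generated by the sections of class $\sheaf{}$ of $T^cM$. Since $M$ is a CR submanifold of class $\sheaf{}$, $T^cM$ is a real vector bundle of class $\sheaf{}$. Hence every tangent vector in $T^c_qM$ extends locally to a section of class $\sheaf{}$, and $\mathfrak{g}_q\supseteq T^c_qM$ for all $q$. Note also that $\mathfrak{g}_p=T_pM$ is exactly the finite type condition: $\mathfrak{g}_p$ is spanned by iterated brackets of finitely many sections $X_1,\dotsc,X_k$, since $T_pM$ is finite dimensional.

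First we show that finite type implies minimality. Suppose $\mathfrak{g}_p=T_pM$ and let $S\ni p$ be a submanifold with $T^c_qM\subseteq T_qS$ for all $q\in S$. We show that every $X\in\mathfrak{g}$ is tangent to $S$ near $p$. Sections of $T^cM$ are tangent to $S$, and brackets of vector fields tangent to $S$ are again tangent to $S$. Thus $\mathfrak{g}_p\subseteq T_pS$, so $\dim S=\dim M$, and $M$ is minimal. If $S$ is only assumed smooth, the bracket argument still works because the restrictions of $X_j$ to $S$ are smooth vector fields on $S$; no regularity beyond $\mathcal{C}^1$ or $\mathcal{C}^2$ of $S$ is needed.

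For the converse, suppose $M$ is not of finite type at $p$, i.e.~$\mathfrak{g}_p\subsetneq T_pM$. Apply Theorem \ref{NaganoThm} to $\mathfrak{g}$; this is where quasianalyticity of $\sheaf{}$ and regularity (\ref{ClosedComp}, \ref{InverseThm}, \ref{ODEclosed}, giving flows of class $\sheaf{}$) are used. We obtain an integral manifold $N$ of class $\sheaf{}$ through $p$ with $T_qN=\mathfrak{g}_q\supseteq T^c_qM$ for all $q\in N$ and $\dim N=\dim\mathfrak{g}_p<\dim M$. Taking a small embedded piece $S$ of the immersed manifold $N$ around $p$ yields a submanifold violating minimality. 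Hence $M$ is not minimal at $p$.

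The main obstacle is the application of Theorem \ref{NaganoThm}, specifically the constancy of $\dim\mathfrak{g}_q$ along the orbit of $p$. In the proof of Nagano's theorem this rests on the fact that, for $X\in\mathfrak{g}$, the pushforward of $\mathfrak{g}$ under the flow $\Fl_X^t$ satisfies a linear ODE system. Quasianalyticity is needed to conclude that the coefficients of this system, being of class $\sheaf{}$ in $t$ and determined by their Taylor series, are governed by a finite-dimensional system. Since Theorem \ref{NaganoThm} is taken as given, the remaining points to check are routine:
\begin{itemize}
\item the local extendability of vectors in $T^c_qM$ to sections of class $\sheaf{}$, which follows from the local frames of $\crb$ of class $\sheaf{}$ and \ref{Inversion};
\item the observation that the definition of finite type via finitely many $X_j$ agrees with $\mathfrak{g}_p=T_pM$.
\end{itemize}
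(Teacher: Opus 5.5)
Your proposal is correct and is the argument the paper has in mind. Finite type implies minimality because brackets of vector fields tangent to $S$ stay tangent. The converse follows by taking an embedded piece, through $p$, of the leaf that the quasianalytic Nagano theorem (Theorem \ref{NaganoThm}) produces for the Lie algebra generated by the $\sheaf{}$-sections of $T^cM$. One point should be made explicit: build this Lie algebra from sections over a neighborhood $U$ of $p$ that carries an $\sheaf{}$-frame of $T^cM$, and apply Theorem \ref{NaganoThm} on $U$. This is needed because in the quasianalytic setting there are no cut-off functions to extend local sections globally; since minimality and finite type are both local at $p$, the restriction costs nothing.
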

\end{Rem}

\subsection{Regularity of Sections of CR bundles}
\begin{Def}
    Let $(M,\crb)$ be a CR structure of class $\sheaf{}$
    \begin{enumerate}[label=(\alph*),itemsep=0.6ex]
        \item A CR bundle of class $\sheaf{}$ over $M$ is
        a vector bundle $E$ of class $\sheaf{}$ equipped with
    a partial connection $D$, defined on $\crb$.
    More precisely, for each open set $U\subseteq M$ we have that 
    \begin{enumerate}[label=(\roman*),itemsep=0.5ex, leftmargin=3em,topsep=0.4ex]
        \item $D_{aL+K}\omega=aD_L\omega+D_K\omega$ for all $a\in\sheaf[U]{}$,
        $L,K\in\sheaf[U]{\crb}$ and $\omega\in\sheaf[U]{E}$,
        \item $D_L(\omega+\sigma)$ for all $L\in\sheaf[U]{\crb}$ and $\omega,\sigma\in\sheaf[U]{E}$,
        \item $D_L(a\omega)=(La)\omega+aD_L\omega$ for all $a\in\mathcal{C}^1(U)$,
        $L\in\sheaf[U]{\crb}$ and $\omega\in\sheaf[U]{E}$,
        \item $[D_K,D_L]=D_{[K,L]}$ for all $L,K\in\sheaf[U]{\crb}$.
    \end{enumerate}
\item    A (generalized) CR section of $E$ is a section $u\in\Distr[U]{E}$ 
    such that $D_Lu=0$ for all $L\in\sheaf[U]{\crb}$.
%\item A CR module $\module$ is a subsheaf of modules (over the structure sheaf
%$\sheaf{M}$) of the sheaf of modules $\sheaf{E}$, $E$ being a CR bundle over $M$, which is CR closed, i.e.~for every open set $U\subseteq M$, all sections $\omega\in \module[U]$ and every CR vector field $\sheaf[U]{\crb}$
it holds that $D_L\omega\in \module[U]$.
\item More generally, if we consider sheaves of submodules $\Omega$ of $\Distr[]{E}$ (which is
a sheaf of modules over the sheafs of rings $\sheaf{M}$) then we say that $\Omega\subseteq\Distr[]{E}$ is a CR module if it is CR closed.
    \end{enumerate}
\end{Def}
%Generally, given a subset $\Omega$ of $\Distr[U]{E}$ we may write
%\begin{equation*}
%    \left\langle \Omega\right\rangle_{\sheaf{}}
%\end{equation*}
%for the $\sheaf{M}(U)$-module generated by $\Omega$.

From now on $E$ is always a CR bundle over the abstract CR manifold $M$, both of class $\sheaf{}$.
%For an arbitrary submodule $\module\subseteq\sheaf{E}$ we denote by $\hat{\module}$
%the CR hull of $\module$, i.e.~the largest CR module which contains $\module$.
%According to \cite[Lemma 5.3]{LamelBraun2025}, whose proof holds also in the 
%$\sheaf{}$-category, we obtain that there is an increasing sequence of
%submodules given by $\mathcal{N}_0=\module$ and, for $j\in\N$ and $U\subseteq M$
%open,
%\begin{equation*}
%    \mathcal{N}_j(U)=\Set*{\sum_{k}a_k\omega_k+\sum_{\ell} b_k D_{L_k}\theta_k\given
 %   a_k,b_\ell\in\sheaf[U]{M},\; L_\ell\in\sheaf[U]{\crb}}
%\end{equation*}
%such that $\hat{\module}=\bigcup \module_j$
Suppose that $\Omega\subseteq\Distr{E}$ is a CR module.
Following \cite{LamelMirRegSections} and \cite{LamelBraun2025} we set
\begin{equation*}
    \singsupp_{\sheaf{}} \Omega=\overline{\bigcup_{\omega\in\Omega}\singsupp_{\sheaf{}}\omega}
\end{equation*}
and we say that $u\in\Distr[U]{E}$ extends $\sheaf{}$-microlocally if there is
a set $\Gamma\subseteq T^\ast M$ with $\WFR u\subseteq\Gamma$ such that
$\Gamma_p$ is a (non-empty) closed convex cone in $T^\ast_p M\!\setminus\!\{0\}$ for all $p\in M$.
If $u$ is a CR section then we can assume $\Gamma\subseteq T^0M$ by \ref{VectorMicroEll}.

The sheaf of (vector) $\sheaf{}$-multipliers of $\Omega$ is given by 
\begin{equation*}
    \mathcal{S}(\Omega)(U)=\Set*{\eta\in\sheaf[U]{E^\ast}\given \eta(\omega)\in\sheaf[U]{}\;\fa \omega\in\Omega(U)}
\end{equation*}
whereas the sheaf of scalar $\sheaf{}$-multipliers is just
\begin{equation*}
    S(\Omega)(U)=\Set*{\lambda\in\sheaf[U]{}\given \lambda \omega\in\sheaf[U]{}\;\fa\omega\in\Omega}.
\end{equation*}
Note that if either $\mathcal{S}(\module)(U)=\sheaf[U]{E^\ast}$ or 
$S(\module)(U)=\sheaf[U]{M}$ then $\module[U]\subseteq \sheaf[U]{E}$ 
by \emph{\ref{Inversion}}.

The main technical result of \cite{LamelMirRegSections} can be easily generalized
to our setting:
\begin{Prop}\label{MainTechn}
    Let $\module$ be a module over $E$ such that any $\sigma\in\module[U]$
    extends $\sheaf{}$-microlocally. 
    If there is $\lambda\in\sheaf[U]{E^\ast}$ such that $\real \lambda(\sigma)\in
    \sheaf[U]{M}$ for all $\sigma\in\module[U]$ then $\lambda\in\mathcal{S}(\module)$.
\end{Prop}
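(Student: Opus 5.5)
The plan is to reduce the statement to a scalar microlocal argument on each $\sigma\in\module[U]$ separately. Fix $\sigma\in\module[U]$ and set $u=\lambda(\sigma)\in\Distr[U]{}$. The goal is $u\in\sheaf[U]{M}$. By \ref{WFSing} this is equivalent to $\WFR u=\emptyset$, and the question is local, so we may work near an arbitrary point $p\in U$ in the local coordinates and frames introduced above.

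The first step is to show that $u$ itself extends $\sheaf{}$-microlocally with the same cone as $\sigma$. In a local frame $e_1,\dotsc,e_N$ of $E$ with dual frame components $\lambda_k\in\sheaf{}$, we have $u=\sum_k\lambda_k\sigma_k$. By Corollary \ref{WFCor1} and \ref{BasicWF0},
\begin{equation*}
\WFR u\subseteq\bigcup_k\WFR\sigma_k=\WFR\sigma\subseteq\Gamma,
\end{equation*}
where $\Gamma$ is the closed convex cone family given by the hypothesis. Here $\Gamma_p$ is a closed convex cone in $T_p^\ast M\setminus\{0\}$. The essential point is that such a cone contains no pair $\{\xi,-\xi\}$, since a convex cone containing both $\xi$ and $-\xi$ would contain $0$.

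The second step uses the involution axiom \ref{WFInvolution}, which gives $\WFR\bar u=\sigma(\WFR u)\subseteq-\Gamma$, where $\sigma$ denotes the antipodal map (not the section). Since $\real\lambda(\sigma)=\tfrac12(u+\bar u)\in\sheaf[U]{M}$ by hypothesis, \ref{WFSing} gives $\WFR(u+\bar u)=\emptyset$. Then \ref{BasicWF0} applied to $u=(u+\bar u)-\bar u$ yields
\begin{equation*}
\WFR u\subseteq\WFR(u+\bar u)\cup\WFR\bar u\subseteq-\Gamma.
\end{equation*}
Hence $\WFR u\subseteq\Gamma\cap(-\Gamma)$, and this intersection is empty at every fiber because $\Gamma_p$ is a proper convex cone not containing $0$. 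Therefore $\WFR u=\emptyset$, so $u\in\sheaf[U]{M}$ by \ref{WFSing}. Since $\sigma$ was arbitrary, $\lambda\in\mathcal{S}(\module)(U)$.

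The main obstacle is the interpretation of "extends $\sheaf{}$-microlocally" and the cone convention. One must ensure that $\Gamma_p$ is a genuinely proper (salient) convex cone, so that $\Gamma_p\cap(-\Gamma_p)=\emptyset$ away from $0$. If the definition allowed $\Gamma_p$ to contain a line, the argument would fail, so I would make explicit that convexity inside $T^\ast_pM\setminus\{0\}$ forces salience. A second, minor point is to check that the microlocal estimate for $u=\lambda(\sigma)$ is independent of the chosen local frame. This follows from Theorem \ref{UltraDiffeoWF} and Corollary \ref{WFCor1}, since frame changes are of class $\sheaf{}$.
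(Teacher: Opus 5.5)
Your proof is correct and is essentially the paper's argument: both combine \ref{WFInvolution} and \ref{BasicWF0} with the observation that convexity of $\Gamma_p\subseteq T^\ast_pM\setminus\{0\}$ forces $\Gamma_p\cap(-\Gamma_p)=\emptyset$. The differences are cosmetic: the paper shows $\WFR\overline{\lambda(\sigma)}\subseteq\Gamma_p\cap(-\Gamma_p)$ rather than $\WFR\lambda(\sigma)\subseteq\Gamma_p\cap(-\Gamma_p)$, and you make explicit the step $\WFR\lambda(\sigma)\subseteq\WFR\sigma$ (via Corollary \ref{WFCor1}), which the paper uses without stating it.
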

\begin{proof}
    If $\sigma\in\module[U]$ then $\lambda(\sigma)+\overline{\lambda(\sigma)}=f\in
    \sheaf[U]{M}$ by assumption. For any $p\in M$ we have that $(\WFR \sigma)\vert_p$
    is contained in a closed convex cone $\Gamma_p\subseteq T_p^\ast M\setminus\{0\}$. Using
    \ref{WFInvolution} gives that $\WFR(\overline{\lambda(\sigma)})\vert_p\subseteq 
    -\Gamma_p$. However, applying \ref{BasicWF0} to the assumption we obtain that
    $\WFR (\overline{\lambda(\sigma)}\vert_p\subseteq \WFR(\lambda(\sigma))\vert_p\cup\WFR (f)\vert_p\subseteq\Gamma_p$.
    Since $\Gamma_p$ is convex it follows that $\Gamma_p\cap(-\Gamma_p)=\emptyset$.
    Thus $\lambda(\sigma)\in\sheaf[U]{M}$ and therefore
    $\lambda\in\mathcal{S}(U)$.
\end{proof}
By Theorem \ref{MainTechn} we have that the sheaf of modules defined by
\begin{equation*}
    \mathcal{S}_0(\module)(U)=\Generate*{\lambda\in\sheaf[U]{E^\ast}\given \real \lambda(\sigma)=0\;\fa \sigma\in\module[U]}
\end{equation*}
is contained in $\mathcal{S}(\module)$. 
If $\module$ is a CR module then $\mathcal{S}(\module)$ is a CR bundle with respect
to the CR bundle $E^\ast$ (equipped with the dual connection coming from $E$),
see \cite{LamelMirRegSections} or \cite{LamelBraun2025}.
Therefore the inductively defined chain of sheaves of modules defined by
\begin{equation*}
    \mathcal{S}_j(\module)(U)=\Generate*{\mathcal{S}_{j-1}(\module)(U),D_{L}\mathcal{S}_{j-1}(\module)(U)\given L\in\sheaf[U]{\crb}}
\end{equation*}
is contained in $\mathcal{S}(\module)$ for a CR module $\module$.
We set $\mathcal{S}^\infty(\module)=\bigcup \mathcal{S}_j(\module)\subseteq\mathcal{S}(\module)$.
For each $p\in M$ the stalk $S^\infty(\module)(p)$ can be identified with a subspace of $E_p$. In fact, the mapping $d: M\rightarrow\{0,\dotsc,r\}$
given by $d(p)=r-\dim S^\infty(\module)(p)$ is upper semi-continuous, where $r$
is the rank of $E$.
If we set $\Omega_{\module}^s=(d^{-1}(s))^\circ$ then
\begin{equation*}
    \Omega_{\module}=\bigcup_{j=0}^r\Omega_{\module}^j=\Omega_{\module}^0\sqcup \Sigma_{\module}
\end{equation*}
is an open and dense subset of $M$.
\begin{Thm}\label{MainCRSections}
Let $\module\subseteq\Distr{E}$ be a CR module over $E$ such that any section $\sigma\in \module(U)$
extends microlocally. Then $\Omega^0_{\module}\cap U\subseteq U\setminus\singsupp_{\sheaf{}}\module[U]$ and
$\Sigma_{\module}\cap (\singsupp_{\sheaf{}}\module[U])^\circ$ is dense in 
$(\singsupp_{\sheaf{}}\module[U])$.

Moreover, for any $k=1,\dotsc,r$ there exists a CR subbundle 
$F_k\subseteq E\vert_{\Omega_{\module}^k}$ of rank $k$ such that 
\begin{enumerate}[label=(\roman*),leftmargin=3em]
    \item for any real subbundle $R\subseteq E$ with 
    $\module[\Omega^k]\subseteq\Distr[\Omega^k]{R}$ we have
    that $F_k\subseteq R$
    \item for any $\sigma\in\module[U]$ it follows that the section
    \begin{equation*}
        \tilde{\sigma}: V_k:=U\cap\Omega_{\module}^k\longrightarrow E\vert_{V_k}/F_k
    \end{equation*}
    is of class $\sheaf{}$.
\end{enumerate}
\end{Thm}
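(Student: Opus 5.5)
The plan is to follow the structure of the proof in \cite{LamelMirRegSections}, cf.~also \cite{LamelBraun2025}, replacing each analytic or Denjoy--Carleman ingredient by one of the abstract axioms. The first step is local. Fix $p\in\Omega^0_{\module}\cap U$. Then $d(p)=0$, so the stalk $\mathcal{S}^\infty(\module)(p)$ has dimension $r$. Because $\mathcal{S}^\infty(\module)=\bigcup_j\mathcal{S}_j(\module)$ is an increasing union of sheaves of modules, there are an index $j$, a neighborhood $V$ of $p$ and sections $\lambda_1,\dotsc,\lambda_r\in\mathcal{S}_j(\module)(V)\subseteq\mathcal{S}(\module)(V)$ whose values at $p$ form a basis of $E_p^\ast$. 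The inclusion into $\mathcal{S}(\module)$ comes from Proposition \ref{MainTechn} together with the CR-closedness of $\mathcal{S}(\module)$. After shrinking $V$, the $\lambda_k$ form a frame of $E^\ast$ of class $\sheaf{}$. Every $\sigma\in\module[V]$ then satisfies $\lambda_k(\sigma)\in\sheaf[V]{}$. Inverting the frame matrix, whose determinant is of class $\sheaf{}$ and nonvanishing, uses \ref{Inversion} and gives $\sigma\in\sheaf[V]{E}$. This proves $\Omega^0_{\module}\cap U\subseteq U\setminus\singsupp_{\sheaf{}}\module[U]$.

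The density statement should then follow topologically. $\Omega_{\module}$ is open and dense, and $\Omega_{\module}=\Omega^0_{\module}\sqcup\Sigma_{\module}$ with $\Omega^0_{\module}$ disjoint from the singular support. Hence any open subset of $(\singsupp_{\sheaf{}}\module[U])^\circ$ meets $\Omega_{\module}$ only inside $\Sigma_{\module}$. I expect the intended closure is the closure of the interior, and I would state it that way when writing the argument.

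For the second part, fix $k$ and work on $\Omega^k_{\module}$. There $\dim\mathcal{S}^\infty(\module)(q)=r-k$ is constant. Using the upper semicontinuity and local generators as above, $\mathcal{S}^\infty(\module)$ restricted to $\Omega^k_{\module}$ is then locally spanned by $r-k$ sections of class $\sheaf{}$ of $E^\ast$ that are pointwise independent. It is therefore a subbundle $G_k\subseteq E^\ast$ of class $\sheaf{}$, and I set $F_k=G_k^{\perp}\subseteq E$, which has rank $k$. The subbundle $G_k$ is closed under the dual partial connection by construction of the chain $\mathcal{S}_j$. Consequently $F_k$ is invariant under $D_L$, so it is a CR subbundle.

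Property (ii) then follows as in the first step. The quotient $E/F_k$ has dual $G_k$. Pairing $\tilde\sigma$ with a local frame of $G_k$ gives functions of class $\sheaf{}$, and inverting the frame via \ref{Inversion} shows that $\tilde\sigma$ is of class $\sheaf{}$.

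For (i), let $R$ be a real subbundle with $\module[\Omega^k]\subseteq\Distr[\Omega^k]{R}$. Any $\lambda\in E^\ast$ that is purely imaginary on $R$, that is, in $iR^{\perp}_{\R}$, satisfies $\real\lambda(\sigma)=0$. By definition such $\lambda$ lie in $\mathcal{S}_0(\module)\subseteq G_k$. Taking annihilators gives $F_k=G_k^\perp\subseteq(iR^\perp_{\R})^\perp=R$, in the complex sense, where the last step needs a careful linear-algebra check of annihilators over $\R$ versus $\C$.

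I expect the main obstacle to be showing that $\mathcal{S}^\infty(\module)$ genuinely forms a subbundle of class $\sheaf{}$ on $\Omega^k_{\module}$. This requires that the constant-rank stalks are locally generated by finitely many sections of class $\sheaf{}$. I would argue this by choosing $r-k$ sections independent at a point, which stay independent nearby. Any other section of $\mathcal{S}^\infty(\module)$ then lies in their span, because the rank is constant, and its coefficients are of class $\sheaf{}$ by Cramer's rule and \ref{Inversion}.
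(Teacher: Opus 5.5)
Your proposal is correct and is essentially the paper's own proof. The first inclusion comes from $\mathcal{S}^\infty(\module)\subseteq\mathcal{S}(\module)$ together with \ref{Inversion}; on $\Omega^k_{\module}$, $\mathcal{S}^\infty(\module)$ is the sheaf of sections of the CR subbundle $F_k^{\bot}$ of rank $r-k$; (i) follows from $\mathcal{S}_0(\module)\subseteq\mathcal{S}^\infty(\module)$ and (ii) from $(E/F_k)^\ast\cong F_k^{\bot}$. Your reading of the density claim as density in $(\singsupp_{\sheaf{}}\module[U])^\circ$ is the right one, since the literal version cannot hold when the singular support is nonempty but has empty interior.

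Two small points in (i). The common annihilator of $\Set{\lambda\given\real\lambda\vert_R=0}$ is $R\cap iR$ rather than $R$, which still gives $F_k\subseteq R$. Also, realizing each such covector as the value of a section in $\mathcal{S}_0(\module)$ uses that $R$ is of class $\sheaf{}$, as assumed in Corollary \ref{SectionCor}.
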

\begin{proof}
    It is clear by the arguments above that 
    $\Omega_{\module}^0\cap U\subseteq U\setminus\singsupp_{\sheaf{}} \module(U)$
    and therefore 
    \begin{equation*}
        \Sigma_{\module}\cap(\singsupp_{\sheaf{}}(\module[U])^0
    \end{equation*} 
    has to be dense
    in $\singsupp_{\sheaf{}}\module[U]$ for any open set $U\subseteq M$.

    More generally we have that $\dim \mathcal{S}^\infty(\module)(p)=r-k$
    for any $p\in \Omega_{\module}^k$ for $k=1,\dotsc,r$. Thus we can write
    \begin{equation}\label{Subbundle}
S^\infty(\module)\left(\Omega^k_{\module}\right)=\sheaf[\Omega^k_{\module}]{F_k^{\bot}}
    \end{equation}
    with $F^\bot_k\subseteq E^\ast\vert_{\Omega^k}$ is a CR subbundle of rank $r-k$ since
    $S^\infty$ is a CR module by construction.
    Thus $(F_k^\bot)^\bot\subseteq E^{\ast\ast}\vert_{\Omega^k}=E\vert_{\Omega^k}$.
    Moreover, given some $\lambda\in\sheaf[U]{E^\ast}$ such that 
    $\real \lambda(\omega)=0$ for all $\omega\in\module[U\cap\Omega^k]$
    then $\mathcal{S}_0(\module)(U\cap \Omega^k)\subseteq\mathcal{S}^\infty(\module)(U\cap\Omega^k)$. That means by \eqref{Subbundle}
    that $\lambda(\omega)=0$ for all $\omega\in\sheaf[U\cap\Omega^k]{F_k}$.
    Thence (i) holds. 
    On the other hand, \eqref{Subbundle} is equivalent to 
    $\mathcal{S}^\infty(\module)(U\cap \Omega^k)=\sheaf[U\cap\Omega^k, (E/F_k)^\ast]{}$, which implies (ii).
\end{proof}
Now we can directly extend the main statements of \cite{LamelMirRegSections} 
to the $\sheaf{}$-setting
by, for a given CR section $u\in\Distr[U]{E}$ which extends microlocally, considering the
submodule $\module[U]$ generated by $u$. Then $\module[U]$ is CR closed and
any element extends microlocally. 
We recall also from \cite{LamelMirRegSections} that a real subbundle $R\subseteq E$ is called CR nondegenerate at $p\in M$ if the CR derivatives of sections of $R$
span $E$ at $p$.

Applying Theorem \ref{MainCRSections} gives
\begin{Cor}\label{SectionCor}
    Let $E$ be a CR bundle over the abstract CR manifold $M$, 
    both of class $\sheaf{}$ and $u\in\Distr[U]{E}$ be a CR section. 
    Then the following holds:
    \begin{enumerate}[label=(\roman*),itemsep=0.2ex]
        \item There exists an open dense subset $\Omega_u$ of $(\singsupp_{\sheaf{}} u)^\circ$ 
        such that for each $p\in\Omega_u$ there exists a neighborhood $U$ of $p$
        and a CR bundle $F$ of class $\sheaf{}$ over $F$ with 
        $F\subseteq E\vert_U$ with the following properties:
       For any real subbundle $R\subseteq E$ of class $\sheaf{}$
            such that $\Imag u\vert_U\subseteq R\vert_U$ we have that
            $F\subseteq R\vert_U$.
          Moreover, the section $\tilde{u}: U\rightarrow E\vert_U/F$ is of class
            $\sheaf{}$.
        
        \item If $R\subseteq E$ is a real subbundle which is CR nondegenerate
        at some $p\in U$ such that $\Imag u \subseteq R$ then 
        $u$ is of class $\sheaf{}$ near $p$.
    \end{enumerate}
\end{Cor}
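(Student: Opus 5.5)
We derive Corollary \ref{SectionCor} from Theorem \ref{MainCRSections}, applied to the CR module generated by the single CR section $u$. First we fix the module. On an open set $V\subseteq U$ let $\module[V]$ be the $\sheaf[V]{M}$-submodule of $\Distr[V]{E}$ generated by the restriction $u\vert_V$. It is CR closed: $D_L(au)=(La)u+aD_Lu=(La)u$ for $a\in\sheaf[V]{M}$ and $L\in\sheaf[V]{\crb}$, and $La\in\sheaf[V]{M}$ by \ref{ClosedComp} and \eqref{PClosed}. Every element $au$ extends $\sheaf{}$-microlocally, with the same cones as $u$, since Corollary \ref{WFCor1} gives $\WFR(au)\subseteq\WFR u$. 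This is the point where we use that $u$ extends microlocally; I read the corollary as carrying this implicit hypothesis from the preceding paragraph. By \ref{VectorMicroEll} the cones may be taken inside $T^0M$. Finally $\singsupp_{\sheaf{}}\module[U]=\singsupp_{\sheaf{}}u$, because $\singsupp_{\sheaf{}}(au)\subseteq\singsupp_{\sheaf{}}u$.

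For (i), Theorem \ref{MainCRSections} says that $\Sigma_{\module}\cap(\singsupp_{\sheaf{}}u)^\circ$ is dense in $(\singsupp_{\sheaf{}}u)^\circ$. Recall that $\Sigma_{\module}=\bigcup_{k\ge1}\Omega^k_{\module}$ is a union of open sets, so it is open. Hence
\begin{equation*}
\Omega_u:=\Sigma_{\module}\cap(\singsupp_{\sheaf{}}u)^\circ
\end{equation*}
is open and dense in $(\singsupp_{\sheaf{}}u)^\circ$. Take $p\in\Omega_u$. Then $p\in\Omega^k_{\module}$ for some $k\ge1$, and we choose a neighborhood $U\subseteq\Omega^k_{\module}$ of $p$ and set $F=F_k\vert_U$. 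Property (ii) of Theorem \ref{MainCRSections}, applied to $\sigma=u$, shows that $\tilde u$ is of class $\sheaf{}$. For the minimality property, suppose $R$ is a real subbundle with $\Imag u\vert_U\subseteq R\vert_U$. I interpret this as saying that $u\vert_U$ takes values in $R$, so that $\module[U]=\sheaf[U]{M}\cdot u\subseteq\Distr[U]{R}$ after complexifying the coefficients. Property (i) of Theorem \ref{MainCRSections} then gives $F\subseteq R\vert_U$. This is the step I expect to be delicate. Complex multiples $au$ need not lie in a real subbundle. So I would either rerun the proof of Theorem \ref{MainCRSections}(i) with the real-linear conditions $\real\lambda(\omega)=0$, where $\lambda$ annihilates $R$ and is real on it, tested only against $\omega=u$; or restrict to real coefficients $a$ when forming $\module$. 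The second option loses nothing, since $\mathcal{S}_0$ only uses the condition on generators.

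For (ii), suppose $R$ is CR nondegenerate at $p$ and $u$ takes values in $R$. Choose a local frame of $\sheaf{}$-sections $\lambda_1,\dotsc,\lambda_{r'}$ of $E^\ast$ that are real on $R$ and annihilate it, so that $\real\lambda_j(u)=0$. By Proposition \ref{MainTechn} these lie in $\mathcal{S}_0(\module)$, and hence the iterated CR derivatives $D_{L_1}\cdots D_{L_s}\lambda_j$ lie in $\mathcal{S}^\infty(\module)$. Nondegeneracy of $R$ at $p$ means, after dualizing, that these derivatives span $E^\ast_p$. Spanning is an open condition, so near $p$ we get $\mathcal{S}^\infty(\module)=\sheaf{}(E^\ast)$. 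This gives $d=0$ near $p$, i.e.\ $p\in\Omega^0_{\module}$. By the first assertion of Theorem \ref{MainCRSections}, $u$ is of class $\sheaf{}$ near $p$. Concretely, pairing $u$ with a local frame of multipliers gives $\sheaf{}$ coefficients, and we invert with \ref{Inversion}, i.e.\ by Cramer's rule. The main checks are the duality between the nondegeneracy condition and the spanning of the derivatives of annihilators, and the real-coefficient issue raised in (i).
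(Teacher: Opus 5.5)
Your proposal follows the paper's route (apply Theorem~\ref{MainCRSections} to the CR module generated by $u$, under the implicit hypothesis that $u$ extends microlocally), and it is more careful than the paper, which passes silently over the real-versus-complex coefficient issue you raise. Of your two fixes, keep the first: the complex module, which is CR closed, with $\mathcal{S}_0$ defined by the single condition $\real\lambda(u)=0$ for $\lambda$ whose real part vanishes on $R$ (not $\lambda$ both real on $R$ and annihilating it, which would force $\lambda(u)=0$), and this is what your argument for (ii) actually uses; a module over real-valued coefficients is not CR closed, since $D_L(au)=(La)u$ and $La$ is complex-valued even for real $a$.
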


As an application of the theory developed in this subsection we consider
infinitesimal CR automorphisms extending the results of \cite{FLinfAut} to
the $\sheaf{}$-category, see also \cite{FuerdoesCR}.
\begin{Def}
    A $\mathcal{C}^1$ real vector field $X$ defined on an abstract CR manifold is an infinitesimal CR automorphism if $\Fl_X(\,.\,,\tau)$ is a CR automorphism 
    on its domain of definition for $\tau$ small.
\end{Def}
Any real vector field $X\in \Gamma(U,TM)$ induces 
a section $\mathfrak{Y}: U\rightarrow (T^\prime M)^\ast\cong TM/\crb$ by 
setting $\mathfrak{Y}(\omega)=\omega(X)$ for all $\omega\in \sheaf[U]{T^\prime}$. Moreover $\imag \mathfrak{Y}(\theta)=0$
for all $\theta\in\sheaf[U]{T^0M}$.
If $X$ is an infinitesimal CR automorphism then also $\omega(X,L)=0$ for all $\omega\in\sheaf[U]{T^\prime}$ and $L\in\sheaf[U]{\crb}$.

Now note that $T^\prime M$ is a CR bundle with the canonical connection given by $(D_L\omega)(X)=L(\omega(X))-\omega([L,X])$
where $\omega\in\sheaf[U]{T^\prime}$, $X\in\sheaf[U]{\C TM}$
and $L\in\sheaf[U]{\crb}$.
This induces a canonical CR structure on $(T^\prime M)^\ast$ by
setting $(D^\ast_L\mathfrak{Y})(\omega)=L(\mathcal{Y}(\omega))
-\mathfrak{Y}(D_L\omega)$, where $\mathfrak{Y}\in\sheaf[U]{(T^\prime M)^\ast}$, 
$\omega\in\sheaf[U]{T^\prime M}$ and
$L\in\sheaf[U]{\crb}$.
That means that if $X\in\mathcal{C}^1(U,TM)$ is an infinitesimal
CR automorphism then the induced section 
$\mathfrak{Y}\in\mathcal{C}^1(U, (T^\prime M)^\ast)$ 
is CR and $\imag\mathfrak{Y}(\theta)=0$
for all $\theta\in \sheaf[U]{T^0M}$.
This motivates the following definition.
\begin{Def}
    We say that $\mathfrak{Y}\in\Distr[U]{(T^\prime M)^\ast}$
    is a generalized infinitesimal CR automorphism if 
    $\mathfrak{Y}$ is a CR section and $\imag \mathfrak{Y}(\theta)=0$ for all $\theta\in\sheaf[U]{T^0 M}$.
\end{Def}
Moreover, we say that an abstract CR manifold is finitely nondegenerate at $p\in M$ if $(T^\prime)^\ast$ is CR nondegnerate at $p$
with $R=(T^0M)^\ast$ (where in the second case the real dual is considered). We note that this definition of finite nondegeneracy
coincides with the usual one, see e.g.~\cite{FLinfAut}.

Thence Corollary \ref{SectionCor} implies the following result.
\begin{Thm}
    Let $M$ be an abstract CR manifold and
    $\mathfrak{Y}\in\Distr[U]{(T^\prime M)^\ast}$ is 
    a generalized CR automorphism on $U\subseteq M$ which extends 
    $\sheaf{}$-microlocally.
    If $M$ is finitely nondegenerate at $p\in U$ then
    $\mathfrak{Y}$ is of class $\sheaf{}$ near $p$.
\end{Thm}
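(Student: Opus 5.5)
The plan is to reduce the statement to Corollary \ref{SectionCor}(ii), applied to the CR bundle $E=(T^\prime M)^\ast$ with its canonical dual connection $D^\ast$ and to the real subbundle $R=(T^0M)^\ast$. The CR section is $u=\mathfrak{Y}$.

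The first step is to check that $E$ is a CR bundle of class $\sheaf{}$. The bundle $T^\prime M$ is of class $\sheaf{}$, because the forms $\omega_j,\theta_k$ are of class $\sheaf{}$. The connection $(D_L\omega)(X)=L(\omega(X))-\omega([L,X])$ involves only derivatives and products of functions of class $\sheaf{}$, so by \ref{DerivClosed} and the algebra property it preserves sections of class $\sheaf{}$. Properties (i)–(iv) of a partial connection follow from the usual computations, with (iv) coming from the involutivity of $\crb$ via the Jacobi identity. The dual connection $D^\ast_L$ then makes $E$ a CR bundle of class $\sheaf{}$.

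The second step is to realize $R$ as a real subbundle of $E$ of class $\sheaf{}$ that contains the values of $\mathfrak{Y}$. The inclusion $T^0M\hookrightarrow T^\prime M$ induces a surjection $(T^\prime M)^\ast\to\C\otimes(T^0M)^\ast$. In the adapted frame $\omega_1,\dotsc,\omega_n,\theta_1,\dotsc,\theta_d$, I would take $R$ to be the set of elements $\mathfrak{Z}$ with $\imag\mathfrak{Z}(\theta_k)=0$ for all $k$. The paper's phrase ``$R=(T^0M)^\ast$'' is to be read in exactly this sense. The condition $\imag\mathfrak{Y}(\theta)=0$ for all $\theta\in\sheaf[U]{T^0M}$ then says precisely that $\Imag\mathfrak{Y}\subseteq R$ as a distributional section.

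For $R$ to be a genuine subbundle I need the frame to be real-valued on $T^0$. I would use \ref{Inversion} and the regularity of $\sheaf{}$ (in particular \ref{ClosedComp}) to choose real $\theta_k$ of class $\sheaf{}$ spanning $T^0M$, which is possible because $T^0M$ is a real subbundle.

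The third step uses finite nondegeneracy. By definition, $M$ is finitely nondegenerate at $p$ exactly when $R$ is CR nondegenerate at $p$, meaning the iterated $D^\ast$-derivatives of sections of $R$ span $E_p$. The hypothesis that $\mathfrak{Y}$ extends $\sheaf{}$-microlocally is the remaining input required by Theorem \ref{MainCRSections}. Through the CR module generated by $\mathfrak{Y}$, every element of that module then extends microlocally, by Corollary \ref{WFCor1} and \ref{ScalarUltraEll} together with the fact that $\WFR$ of a CR section lies in $T^0$. Corollary \ref{SectionCor}(ii) then gives that $\mathfrak{Y}$ is of class $\sheaf{}$ near $p$.

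The main obstacle is the bookkeeping in the second and third steps. I must make sure that the definition of finite nondegeneracy, as the CR nondegeneracy of $R$ inside $(T^\prime)^\ast$, matches the hypothesis of Corollary \ref{SectionCor}(ii) with the correct real structure. This matching is where the identification of $R$ with the real dual of $T^0M$ must be done carefully. If it fails, I would instead argue directly via Proposition \ref{MainTechn}. The functionals $\lambda=i\,\theta_k$, viewed as sections of $E^\ast=T^\prime M$, satisfy $\real\lambda(\mathfrak{Y})=0$, so they lie in $\mathcal{S}_0$. Their CR derivatives generate $\mathcal{S}^\infty$, and by nondegeneracy this spans $E^\ast_p$, so $p\in\Omega^0$. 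Theorem \ref{MainCRSections} then gives the regularity of $\mathfrak{Y}$ near $p$.
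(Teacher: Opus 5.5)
Your proposal is correct and is the paper's argument: the paper proves this theorem only by invoking Corollary~\ref{SectionCor}(ii) for $E=(T^\prime M)^\ast$ with the dual connection, the real subbundle determined by $T^0M$, and $u=\mathfrak{Y}$; your bookkeeping makes that citation explicit, namely reading $R$ as $\{\mathfrak{Z}:\imag\mathfrak{Z}(\theta_k)=0\}$ and finite nondegeneracy as the CR derivatives of the $i\theta_k$ spanning $T^\prime_p=E^\ast_p$. One caveat about your fallback: $i\theta_k$ need not lie in $\mathcal{S}_0$ of the module of complex multiples $a\mathfrak{Y}$ (take $a=i$), so apply the argument of Proposition~\ref{MainTechn} to $\mathfrak{Y}$ itself to get $\mathfrak{Y}(\theta_k)$ of class $\sheaf{}$, and then pass to all iterated CR derivatives via $L(\mathfrak{Y}(\omega))=\mathfrak{Y}(D_L\omega)$, which holds because $\mathfrak{Y}$ is CR.
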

It is now possible to extend the other results from \cite{FLinfAut}, see
also \cite{FuerdoesCR}, or the statements on the regularity of infinitesmial pertubations of CR immersions given in \cite{LamelMirRegSections}.

\section{Examples of ultradifferential sheaves}\label{Section5}
%\subsection{Ultradifferentiable sheaves given by weight matrices}
\begin{Def}\label{DefWeight}\hspace{3em}
\vspace{0.1ex}
\begin{enumerate}[label=(\roman*)]
    \item A weight sequence is a sequence $\bM=(M_k)_k$ of positive numbers such that $M_0=1$
    and \begin{equation*}
        M_k^2\leq M_{k-1}M_{k+1}\qquad \fa k\in\N.
    \end{equation*}
    \item A weight matrix $\fM$ is a family of weight sequences
    such that for any pair $\bM,\bN\in\fM$ we have either
    $M_k\leq N_k$ for all $k\in\N_0$ or $N_k\leq M_k$ for all $k\in\N_0$.
\end{enumerate}
\end{Def}
Given a weight matrix $\fM$ we can define, following \cite{RainerSchindl14},
two ultradifferentiable sheaves, the Roumieu sheaf $\Rou{\fM}$ and the Beurling sheaf $\Beu{\fM}$.
If $U\subseteq\R[n]$ is open and $f\in\E_n(U)$ then
\begin{itemize}[label=\textbullet]
    \item $f\in\Rou[U]{\fM}$ if for all compact sets $K\subseteq U$ there are
    some $\bM\in\fM$ and constants $C,h>0$ such that
    \begin{equation}\label{DefiningEstimates}
        \sup_{x\in K}\abs*{D^\alpha f(x)}\leq Ch^{\abs{\alpha}}M_{\abs{\alpha}}
\qquad \alpha\in\N_0^n.
    \end{equation}
    \item $f\in\Beu[U]{\fM}$ if for all compact sets $K\subseteq U$, every
    $\bM\in\fM$ and each $h>0$ there is a constant $C>0$ such that
    \eqref{DefiningEstimates} holds.
\end{itemize}
It can be easily seen that both sheaves are isotropic ultradifferentiable sheaves.
\begin{Rem}
    If $\fM=\{\bM\}$ consists of a single weight sequence we obtain
    the classical Denjoy-Carleman classes $\Rou{\bM}$ and $\Beu{\bM}$, 
    see e.g.~\cite{Komatsu}. (Non-)Quasianalyticity of Denjoy-Carleman classes
    is characterized by the famous Denjoy-Carleman theorem:
    Both sheaves $\Rou{\bM}$ and $\Beu{\bM}$ are non-quasianalytic if and only if
    \begin{equation}\label{DCTheorem}
        \sum_{j=0}^\infty\frac{M_j}{M_{j+1}}<\infty.
    \end{equation}
    We say that a weight sequence $\bM$ is non-quasianalytic if 
    \eqref{DCTheorem} holds.
\end{Rem}
Returning to general weight matrices $\fM$ we can
characterize the non-quasianalyticity of the sheaves $\Rou{\fM}$ and $\Beu{\fM}$
according to \cite{Schindl16}:
\begin{itemize}[label=$\triangleright$]
    \item $\Rou{\fM}$ is non-quasianalytic if and only if there is some $\bM\in\fM$ which is non-quasianalytic.
    \item $\Beu{\fM}$ is non-quasianalytic if and only if every $\bM\in\fM$
    is non-quasianalytic.
\end{itemize}

\begin{Def}\label{SemiMatrix}
    Let $\fM$ be a weight matrix.
    \begin{enumerate}[label=(\roman*),itemsep=0.1ex]
        \item $\fM$ is R-semiregular if $\liminf \sqrt[k]{M_k/k!}>0$ for all $\bM\in\fM$ and
        \begin{equation}\label{RDerivClosed}
            \fa \bM \;\ex\bN\; \ex Q>0: \quad M_{k+1}\leq Q^{k+1}N_{k}\qquad \fa k\in\N_0.
        \end{equation}
        \item $\bM$ is B-semiregular if $\lim \sqrt[k]{M_k/k!}=\infty$ for all 
        $\bM\in\fM$ and
        \begin{equation*}
            \fa\bN\in\fM\;\ex\bM\in\fM\;\ex Q>0:\quad M_{k+1}\leq Q^{k+1}N_k\qquad
            \fa k\in\N_0.
        \end{equation*}
    \end{enumerate}
\end{Def}
\begin{Thm}[{\cite{FNRS20}, cf.~Remark \ref{SemiRem}}]
Let $\fM$ be a weight matrix.
\begin{enumerate}[label=(\roman*),itemsep=0.1em]
    \item If $\fM$ is R-semiregular then $\Rou{\fM}$ is semiregular.
    \item If $\fM$ is B-semiregular then $\Beu{\fM}$ is semiregular.
\end{enumerate}
\end{Thm}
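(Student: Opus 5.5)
We must verify (S1)–(S4) for $\Rou{\fM}$ under R-semiregularity (the Beurling case is analogous, with the quantifiers swapped). The approach is to check each axiom directly from the defining estimates \eqref{DefiningEstimates}, using only the two conditions in Definition \ref{SemiMatrix}.

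\textbf{(S1).} For $f\in\analytic_n(U)$ and $K\Subset U$, the Cauchy estimates give $\sup_K\abs{D^\alpha f}\le C h^{\abs\alpha}\abs{\alpha}!$. Since $\liminf\sqrt[k]{M_k/k!}>0$, we have $k!\le c^{k}M_k$ for some $c>0$ and any fixed $\bM\in\fM$. Hence $f\in\Rou[U]{\fM}$, with the same $\bM$ for every $K$.

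\textbf{(S2).} If $f$ satisfies the estimates on $K$ with $\bM$ and $h$, then $\sup_K\abs{D^\alpha D_jf}\le Ch^{\abs\alpha+1}M_{\abs\alpha+1}$. Condition \eqref{RDerivClosed} bounds $M_{k+1}$ by $Q^{k+1}N_k$, so $D_jf$ satisfies the estimates with $\bN$ and constants $ChQ$, $hQ$. The same argument gives (B) with the reversed quantifier.

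\textbf{(S3).} Since $f$ vanishes on the hyperplane $x_j=a$, write
\begin{equation*}
g(x)=\int_0^1 (\partial_j f)(x_1,\dots,a+t(x_j-a),\dots,x_n)\,dt .
\end{equation*}
Then $D^\alpha g$ is the average of $\partial^{\alpha+e_j}f$ weighted by $t^{\alpha_j}$, evaluated on the segments. If $K$ is convex in the $x_j$-direction toward the hyperplane, we bound this by $Ch^{\abs\alpha+1}M_{\abs\alpha+1}$ and then use \eqref{RDerivClosed} as in (S2). Away from the hyperplane, $g=f/(x_j-a)$ is a product of $f$ with a real-analytic function, so it suffices that $\Rou{\fM}$ is an algebra. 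This is the expected obstacle: closedness under multiplication is standard via log-convexity and Leibniz ($M_iM_{k-i}\le M_k$), and analytic factors are absorbed via $k!\lesssim c^kM_k$ together with log-convexity. For general $U$, we cover the segments locally; near the hyperplane we use the integral formula on small boxes.

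\textbf{(S4).} Composition with a real-analytic $\Phi$ is the genuinely delicate step. We use Faà di Bruno: $D^\alpha(f\circ\Phi)$ is a sum over partitions of products $f^{(\ell)}(\Phi)\prod \Phi^{(\beta_i)}$, with $\abs{\Phi^{(\beta)}}\le C\rho^{\abs\beta}\abs\beta!$. Using the standard majorant count (compare with $1/(1-\rho\sum y)$), one obtains
\begin{equation*}
\abs{D^\alpha(f\circ\Phi)}\le \sum_{\ell}C h^\ell M_\ell\,\rho^{\abs\alpha}\frac{\abs\alpha!}{\ell!}\binom{\abs\alpha-1}{\ell-1}C^\ell .
\end{equation*}
We need $M_\ell\,\abs\alpha!/\ell!\le A^{\abs\alpha}M_{\abs\alpha}$. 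This follows from log-convexity of $\bM$ together with the comparison $k!\lesssim c^kM_k$: the sequence $M_k/k!$ is then, up to a geometric factor, almost increasing, and the remaining step is to invoke the known result of \cite{FNRS20} or to prove this almost-increasing property. The binomial sum contributes only a factor $2^{\abs\alpha}$. Thus $f\circ\Phi\in\Rou{\fM}$ with the same $\bM$.
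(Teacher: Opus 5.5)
Your route is the one the paper points to: check (S1)--(S4) directly on the defining estimates, using the fundamental-theorem-of-calculus formula for (S3). The paper gives no proof beyond citing \cite{FNRS20} and mentioning the FTC argument in Remark~\ref{SemiRem}, and your (S1)--(S3) are fine.

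\textbf{Gap in (S4), the Roumieu case.} This is the only step with real content, and its key inequality $M_\ell\,k!/\ell!\le A^kM_k$ for $\ell\le k$ is asserted and then deferred either to \cite{FNRS20} or to an unproved ``almost increasing'' property. Since the theorem itself is quoted from \cite{FNRS20}, that deferral does not close the argument. The inequality does follow from the two ingredients you name:
\begin{itemize}
\item log-convexity and $M_0=1$ make $M_k^{1/k}$ increasing, so $M_\ell\le M_k^{\ell/k}$;
\item $k!\le c^kM_k$ together with $k!\ge (k/e)^k$ gives $k\le ec\,M_k^{1/k}$.
\end{itemize}
Combining these with $k!/\ell!\le k^{k-\ell}$ gives
\[
M_\ell\,\frac{k!}{\ell!}\;\le\; M_k^{\ell/k}\,k^{k-\ell}\;\le\;(ec)^{k-\ell}M_k .
\]

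\textbf{Gap in (S4), the Beurling case.} For $\Beu{\fM}$ the inequality in your form, with a fixed $A$, is not enough, so ``analogous, with the quantifiers swapped'' hides a real point. With fixed $A$, your Fa\`a di Bruno bound has a geometric factor of size at least $A\rho$, however small you take $h$. This cannot be repaired afterwards. The $\ell=1$ term $f'(\Phi)\,\Phi^{(k)}$ contributes $\rho^k k!$, and this is $O(h'^kM_k)$ for \emph{every} $h'>0$ only when $(M_k/k!)^{1/k}\to\infty$. That is information a fixed-$A$ inequality, which already holds under the R-condition, cannot carry.

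The fix is to use the B-condition in the form: for every $\epsilon>0$ there is $C_\epsilon\ge 1$ with $k!\le C_\epsilon\epsilon^kM_k$. The same computation then gives
\[
M_\ell\,\frac{k!}{\ell!}\le C_\epsilon(e\epsilon)^{k-\ell}M_k ,
\]
and the Fa\`a di Bruno sum is at most $C_hC_\epsilon\bigl(2\rho\max\{hC_\Phi,e\epsilon\}\bigr)^kM_k$. Taking $h$ and $\epsilon$ small makes this $\le C'h'^kM_k$ for any prescribed $h'>0$.

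A minor point in the Roumieu case: for products, and for gluing local estimates over finitely many pieces of a compact set, you need the total order of $\fM$ to pass to a single $\bM$.
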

In \cite{FNRS20} wavefront sets for $\Rou{\fM}$ and $\Beu{\fM}$ are defined.
\begin{Def}
    Let $\fM$ be a weight matrix, $U\subseteq\R[n]$ open, $u\in\Distr[U]{n}$ and
    $(x_0,\xi_0)\in S^\ast U$.
    \begin{enumerate}[label=(\roman*),itemsep=0.2ex]
        \item $(x_0,\xi_0)\notin\WF_{\{\fM\}}$ if there are a neighborhood $V$ of 
        $x_0$, a bounded sequence $(u_k)_k\subseteq \E^\prime_n(U)$ with
        $u_k\vert_V=u\vert_V$ for all $k\in\N_0$, a neighborhood $\Gamma\subseteq S^{n-1}$ of $\xi_0$, some $\bM\in\fM$ and a constant
        $h>0$ such that 
        \begin{equation}\label{DefWFfM}
            \sup_{\substack{k\in\N_0\\\xi\in\Gamma\\ \lambda>0}}
            \frac{\abs*{\hat{u}_k(\lambda\xi)}}{h^k M_k\lambda^k}<\infty
        \end{equation}
        \item $(x_0,\xi_0)\notin\WF_{(\fM)}$ if there are a neighborhood $V$ of 
        $x_0$, a bounded sequence $(u_k)_k\subseteq \E^\prime_n(U)$ with
        $u_k\vert_V=u\vert_V$ for all $k\in\N_0$ and a neighborhood 
        $\Gamma\subseteq S^{n-1}$ such that \eqref{DefWFfM} holds for all 
        $\bM\in\fM$ and all $h>0$.
    \end{enumerate}
\end{Def}

\begin{Thm}[{\cite{FNRS20} \& \cite{fuerdoes2025}}]
    Let $\fM$ be a weight matrix.
    \begin{enumerate}[label=(\roman*),itemsep=0.4ex]
        \item If $\fM$ is R-semiregular then $\Rou{\fM}$ is microlocal.
        \item If $\fM$ is B-semiregular then $\Beu{\fM}$ is microlocal.
    \end{enumerate}
\end{Thm}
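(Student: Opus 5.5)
The statement attributes the result to \cite{FNRS20} and \cite{fuerdoes2025}, so the plan is to check the defining properties of a microlocal sheaf one by one for $\WF_{\{\fM\}}$ (Roumieu case; the Beurling case follows by swapping the quantifiers). By the preceding theorem, R-semiregularity already makes $\Rou{\fM}$ a semiregular sheaf, so only the properties \ref{BasicWF0}--\ref{WFTensor} of $\WFR=\WF_{\{\fM\}}$ remain to be checked.

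The sheaf property is first: $\WF_{\{\fM\}}u$ is closed and depends only on $u$ near each point, because the definition is local. Property \ref{BasicWF0} holds because the sum of two bounded sequences is bounded. For \ref{WFComparision}, the condition $\liminf\sqrt[k]{M_k/k!}>0$ compares the estimate \eqref{DefWFfM} with H\"ormander's characterisation of $\WF_A$ via bounded sequences with $k!$ growth, giving $\WF_{\{\fM\}}u\subseteq\WF_Au$. Any single $\bM$ gives polynomial decay of every order, so $\WF u\subseteq\WF_{\{\fM\}}u$. Property \ref{WFInvolution} follows from $\widehat{\bar u_k}(\lambda\xi)=\overline{\hat u_k(-\lambda\xi)}$.

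The core is \ref{WFSing}, i.e.\ $\pi(\WF_{\{\fM\}}u)=\singsupp_{\Rou{\fM}}u$. The plan is to adapt \cite[Section 8.4]{HoeBook1}. For one direction, a function in $\Rou[V]{\fM}$ is cut off with H\"ormander's sequence $\chi_k$, where $\abs{D^{\alpha+\beta}\chi_k}\le C_\alpha(Ck)^{\abs\beta}$ for $\abs\beta\le k$. Leibniz's rule then bounds the resulting sequence by $h^kM_k$, and this uses log-convexity of $\bM$. For the converse, if the estimate holds in every direction, a compactness argument over $S^{n-1}$ and Fourier inversion give $\abs{D^\alpha u}\le Ch^{\abs\alpha}N_{\abs\alpha+n+1}$. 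Property \eqref{RDerivClosed} absorbs the derivative shift by passing to a larger sequence in $\fM$. I expect this to be the main obstacle, because the matrix structure forces every loss of finitely many derivatives to be paid by changing the weight sequence, and only R-semiregularity makes that possible.

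Finally, \ref{WFanalMap}, \ref{WFproj} and \ref{WFTensor} are proved as in \cite[Theorems 8.2.4, 8.2.9, 8.5.1]{HoeBook1}. For \ref{WFproj}, the restriction $\hat u_1(\xi)=\hat u(\xi,0)$ carries the estimates over directly. For \ref{WFTensor}, $\widehat{u_k\otimes v_k}=\hat u_k\otimes\hat v_k$ is combined with $M_jM_{k-j}\le M_k$, which follows from log-convexity and $M_0=1$. Analytic pullbacks (\ref{WFanalMap}) go through the stationary phase estimates of \cite[Section 8.5]{HoeBook1}, since analytic maps preserve $k!$-type cutoff bounds and \eqref{RDerivClosed} absorbs the losses. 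These details are exactly those carried out in \cite{FNRS20} and \cite{fuerdoes2025}, which the statement cites.
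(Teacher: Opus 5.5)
Your overall route matches the paper's: check \ref{BasicWF0}--\ref{WFTensor} for $\WF_{\{\fM\}}$ by adapting \cite[Sections 8.4--8.5]{HoeBook1}, and defer the details to \cite{FNRS20} and \cite{fuerdoes2025}. The paper itself gives nothing beyond those citations. However, the one step you actually sketch for the hard half of \ref{WFSing} does not work as written, and you have misplaced the main obstacle.

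The bounded sequences in \eqref{DefWFfM} depend on the direction. For each $\xi\in S^{n-1}$ you get a different sequence $(u^{\xi}_k)_k$, equal to $u$ only near $x_0$. After extracting finitely many cones, you have no single family of distributions whose Fourier transforms satisfy the estimate in every direction, so Fourier inversion cannot be applied. You also cannot glue the sequences with a fixed cutoff in $\Rou{\fM}\cap\D$, because in the quasianalytic case no such cutoff exists.

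The missing ingredient is the $\fM$-version of H\"ormander's localization lemma (cf.\ \cite[Lemma 8.4.4]{HoeBook1}). It states: if $(x_0,\xi_0)\notin\WF_{\{\fM\}}u$, then there are a neighbourhood $U$ of $x_0$ and a conic neighbourhood $\Gamma$ of $\xi_0$ with the following property. For every sequence $\chi_k\in\D(U)$ with $\abs{D^{\alpha+\beta}\chi_k}\le C_\alpha(C_\alpha k)^{\abs{\beta}}$ for $\abs{\beta}\le k$, the canonical sequence $(\chi_k u)_k$ itself satisfies \eqref{DefWFfM} on $\Gamma$.

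To prove it, write $\widehat{\chi_k u}=\widehat{\chi_k u_k}=(2\pi)^{-n}\hat\chi_k\ast\hat u_k$, where $(u_k)$ is the sequence given at $(x_0,\xi_0)$, and split the integral. Near $\xi$ you use the decay of $\hat u_k$. Far from $\xi$ you play the decay of $\hat\chi_k$ against the uniform polynomial growth of $\hat u_k$. This is exactly where $\liminf(M_k/k!)^{1/k}>0$ is needed, to dominate the factor $(Ck)^k$ by $h^kM_k$. Only after this lemma do compactness and Fourier inversion applied to $\chi_{\abs{\alpha}+n+1}u$ give your bound. The index shift absorbed by \eqref{RDerivClosed} is the routine part.

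The same lemma is needed for \ref{WFproj}, so the estimates do not carry over ``directly'' through $\hat u_1(\xi)=\hat u(\xi,0)$. There you need a single sequence that is localized near $x_0$ but covers the whole compact $y$-projection of $\supp u$. You take $\pi_\ast\bigl((\chi_k\otimes\psi_k)u\bigr)$ with $\pi(x,y)=x$ and $k$-dependent cutoffs $\chi_k,\psi_k$. Estimating its Fourier transform at $(\xi,0)$ requires the lemma together with a partition of unity in $y$ by such cutoffs. Any cutoff-based proof of \ref{WFanalMap} needs the lemma as well.

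A smaller point: for \ref{WFTensor} the inequality $M_jM_{k-j}\le M_k$ plays no role. The decay of one factor is enough. What has to be absorbed is the polynomial growth $(1+\abs{\theta})^\mu$ of the other factor's Fourier transform, which is again an index shift handled by \eqref{RDerivClosed}.
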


\begin{Thm}[{\cite{RainerSchindl16}}]\leavevmode
 \begin{enumerate}[label=(\roman*),itemsep=0.4ex]
     \item If $\fM$ is a R-semiregular weight matrix then
     $\Rou{\fM}$ is regular if and only if 
     \begin{equation*}
         \fa\bM\in\fM\;\ex\bN\in\fM\;\ex C>0:\quad \left(\frac{M_j}{j!}\right)^{1/j}\leq C\left(\frac{N_k}{k!}\right)^{1/k}\qquad \fa j\leq k.
     \end{equation*}
     \item If $\fM$ is a B-semiregular weight matrix then
     $\Beu{\fM}$ is regular if and only if 
     \begin{equation*}
         \fa\bN\in\fM\;\ex\bM\in\fM\,\ex C>0:\quad \left(\frac{M_j}{j!}\right)^{1/j}\leq C\left(\frac{N_k}{k!}\right)^{1/k}\qquad \fa j\leq k.
     \end{equation*}
 \end{enumerate}
\end{Thm}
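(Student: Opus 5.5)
The plan is to treat the two parts in parallel, since the Roumieu and Beurling cases differ only in the order of quantifiers over $\fM$ and $h$. I write $m_k=M_k/k!$ and $n_k=N_k/k!$. The condition in the statement says that $\bM\mapsto (m_j^{1/j})_j$ is "almost increasing" up to passing to another sequence of the matrix. I call this condition (FdB). It is exactly what makes products $m_{\alpha_1}\cdots m_{\alpha_j}$ with $\alpha_1+\dotsb+\alpha_j=k$ controllable by $C^k n_k$. Since semiregularity is assumed, and the previous theorem already gives that $\Rou{\fM}$ (resp.~$\Beu{\fM}$) is semiregular, it remains to verify \ref{ClosedComp}, \ref{InverseThm} and \ref{ODEclosed} under (FdB), and conversely to deduce (FdB) from regularity.

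\emph{Sufficiency.} For \ref{ClosedComp} I apply the multivariate Faà di Bruno formula to $g\circ\Phi$. This expresses $\partial^\alpha(g\circ\Phi)/\alpha!$ as a sum over $j\le\abs{\alpha}$ and decompositions $\alpha=\alpha_1+\dots+\alpha_j$ of terms $\frac{\partial^\beta g}{\beta!}(\Phi)\prod_i \frac{\partial^{\alpha_i}\Phi}{\alpha_i!}$. I insert the estimates $\abs{\partial^\gamma f}\le Ch^{\abs\gamma}\abs\gamma!\,m_{\abs\gamma}$ on compact sets. Using (FdB) in the form
\begin{equation*}
    m_j\prod_i m_{\abs{\alpha_i}}\le C^{\abs\alpha}\, n_{\abs\alpha}^{j/\abs{\alpha}}\, n_{\abs\alpha}^{(\abs\alpha-j)/\abs\alpha}\cdot(\text{harmless factor}),
\end{equation*}
I bound each term by $n_{\abs\alpha}$. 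The combinatorial count of decompositions is at most geometric in $\abs\alpha$, so $g\circ\Phi$ satisfies estimates with $\bN$. In the Roumieu case $\bN$ is produced from $\bM$. In the Beurling case I start from the target $\bN$ and pick $\bM$, absorbing constants into arbitrary small $h$. For \ref{InverseThm} I use the earlier remark that it suffices to show $F^{-1}$ is of class $\sheaf{}$. I differentiate $F^{-1}$ inductively via $(F^{-1})'=(F'\circ F^{-1})^{-1}$, a composition of $F'$, matrix inversion (a real-analytic map, hence fine by \ref{AnalyticIncl} and \ref{ClosedComp}) and $F^{-1}$. I then close the bootstrap with a majorant argument in the spirit of Komatsu: assume estimates of order $k$ and derive order $k+1$ using Faà di Bruno plus (FdB) again. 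The ODE property \ref{ODEclosed} is handled the same way. The solution satisfies $x'=f(x,t)$, so higher derivatives in $t$ are given recursively by compositions. Derivatives in the initial data $y$ satisfy the linearized variational equations, which have the same structure, and the inductive Faà di Bruno estimate yields the bounds.

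\emph{Necessity.} Here I only use \ref{ClosedComp} (indeed just \ref{Inversion}, which the earlier theorem derives from \ref{InverseThm}). Following Rainer--Schindl, I would take a characteristic function $f\in\Rou{\bM}$ whose derivatives at $0$ realize $M_k$ up to geometric factors, built from a sum $\sum_k \frac{M_k}{M_{k-1}^{\,\cdot}}\exp(i t_k x)$-type series. I compose it with $1/(1-x)$, equivalently form $1/(1-f)$ or $\exp\circ f$, and read off the Taylor coefficients at $0$ via Faà di Bruno. All terms have the same sign, so no cancellation occurs, and one obtains $m_j^{k/j}\le C^k n_k$ for all $j\le k$, i.e.~(FdB). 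The quantifier order is inherited from the Roumieu or Beurling definition.

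The main obstacle I expect is the necessity direction. Constructing the characteristic function uniformly inside the weight matrix, so that its derivatives are exactly comparable to $M_k$ along the whole sequence and not just along a subsequence, requires the log-convex regularization of $\bM$. The positivity of the Faà di Bruno terms must also be arranged carefully. On the sufficiency side, the inverse function and ODE bootstraps are technical, but they are standard once the composition estimate is established.
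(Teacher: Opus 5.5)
\textbf{Gap in the sufficiency estimate.} The paper gives no proof of this statement and defers to \cite{RainerSchindl16}. Your architecture is the standard one: Fa\`a di Bruno for \ref{ClosedComp}, majorant bootstraps for \ref{InverseThm} and \ref{ODEclosed}, and a characteristic function plus inversion for necessity. However, the estimate on which all of sufficiency rests is asserted, not derived.

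Write $\mu_k=m_k^{1/k}$, $\nu_k=n_k^{1/k}$, and let $\alpha_1+\dots+\alpha_j=k$ be the orders in a Fa\`a di Bruno term. Applied factor by factor, the condition gives $m_j\le(C\nu_k)^j$ and $\prod_i m_{\alpha_i}\le(C\nu_k)^k=C^kn_k$. The exponent on $n_k$ is $1$, not $(k-j)/k$. So only $m_j\prod_i m_{\alpha_i}\le C^{k+j}\,n_k\,\nu_k^{\,j}$ follows, and the surplus $\nu_k^{\,j}$ is what your ``harmless factor'' would have to absorb.

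That surplus is not geometric:
\begin{itemize}
\item For $\fM=\{\bM\}$ with $M_k=k!^{1+s}$ (R-semiregular, and the condition holds with $\bN=\bM$), it is about $(k/e)^{sj}$. This is superexponential for $j\approx k/2$.
\item If you shave only one factor per part, $m_{\alpha}\le(C\nu_k)^{\alpha-1}\mu_{\alpha}$, the leftover is $\prod_i\mu_{\alpha_i}$. This is not geometric either, e.g.\ for the R-semiregular matrix $M^{(n)}_k=e^{(k+n)^3-n^3}$ with $j=\alpha_i=\sqrt{k}$.
\end{itemize}
In both examples the true bound $m_j\prod_i m_{\alpha_i}\le C^km_k$ comes from log-convexity, not from the almost-increasing condition. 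For Gevrey weights it is just $j!\,\alpha_1!\cdots\alpha_j!\le k!$.

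So you need a separate lemma. The condition \emph{together with log-convexity} of the weight sequences must imply the Fa\`a di Bruno property: $\forall\bM\,\exists\bN\,\exists C$ with $m_jm_{\alpha_1}\cdots m_{\alpha_j}\le C^kn_k$ whenever $\alpha_i\ge1$ and $\alpha_1+\dots+\alpha_j=k$ (quantifiers reversed in the Beurling case). Log-convexity gives $M_jM_{\alpha_1}\cdots M_{\alpha_j}\le M_1^jM_k$, which leaves only the multinomial loss $k!/(j!\,\alpha_1!\cdots\alpha_j!)$. Controlling that loss is where the condition must enter, and it is the substantive step in Rainer--Schindl. Your inverse-function and ODE bootstraps invoke the same estimate, so they inherit the gap.

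\textbf{Gap in necessity for (ii).} ``Inheriting the quantifier order'' is not enough here. A characteristic function of a single $\bM\in\fM$ has derivatives at $0$ of size at least $M_k$. Hence it lies in $\Rou{\bM}$ but not in $\Beu{\fM}$, and cannot serve as the test function. You need an auxiliary sequence $\bL$ with $\Rou{\bL}\subseteq\Beu{\fM}$ that still violates the condition against the fixed $\bN$ given by its negation. That is a diagonal construction the Roumieu argument does not supply.
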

We say that a weight sequence $\bM$ is strongly log-convex if
\begin{equation*}
    \left(\frac{M_k}{k!}\right)^2\leq \frac{M_{k-1}}{(k-1)!}\frac{M_{k+1}}{(k+1)!}\qquad k\in\N.
\end{equation*}
It is easy to see that if $\bM$ is strongly log-convex then
the sequence $(M_k/k!)^{1/k}$ is increasing.
\begin{Def}\label{NormalMatrix}
Let $\fM$ be a weight matrix.
\begin{enumerate}[label=(\roman*),itemsep=0.4ex]
    \item $\fM$ is R-normal if $\fM$ is R-semiregular,
    every $\bM\in\fM$ is strongly log-convex and
    \begin{equation*}
        \fa\bM\in\fM\;\ex\bN\in\fM\;\ex Q>0:\quad M_{j+k}\leq Q^{j+k+1}N_jN_k\qquad j,k\in\N_0.
    \end{equation*}
    \item $\fM$ is B-normal if $\fM$ is B-semiregular, every $\bM\in\fM$
    is strongly log-convex and
    \begin{equation*}
        \fa\bN\in\fM\;\ex\bM\in\fM\;\ex Q>0:\quad M_{j+k}\leq Q^{j+k+1}N_jN_k\qquad j,k\in\N_0.
    \end{equation*}
\end{enumerate}
\end{Def}
\begin{Thm}[{\cite{FNRS20}}]
    Let $\fM$ is a R-normal weight matrix then $\Rou{\fM}$ is weakly normal.
    Moreover, if $\Rou{\fM}$ is quasianalytic then \ref{QuasiWF} holds.
\end{Thm}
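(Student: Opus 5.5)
To prove that $\Rou{\fM}$ is weakly normal we have to establish three things. First, that it is regular (R1)--(R3). Second, that it is weakly microlocal, which includes \ref{WFAnaDiff}, \emph{\ref{WFsmoothing}} and \emph{\ref{AnalEllWF}}. Third, that \emph{\ref{UltraWFinvar}} and \emph{\ref{ScalarUltraEll}} hold. Afterwards we verify \ref{QuasiWF} in the quasianalytic case.

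Since an R-normal matrix is in particular R-semiregular, the earlier theorem of \cite{FNRS20} and \cite{fuerdoes2025} already gives that $\Rou{\fM}$ is microlocal. Hence it is weakly microlocal, by the remark after the constant coefficient results and Theorem \ref{MicroEllThm1}.

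\textbf{Regularity.} We use the characterization of \cite{RainerSchindl16}. Every $\bM\in\fM$ is strongly log-convex, so $(M_k/k!)^{1/k}$ is increasing. This gives $(M_j/j!)^{1/j}\leq (M_k/k!)^{1/k}$ for $j\leq k$, so the criterion holds with $\bN=\bM$ and $C=1$. Thus \ref{ClosedComp} and \ref{InverseThm} hold.

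For \ref{ODEclosed} I would argue as follows. Locally the solution $x(t,y)$ of \eqref{IVP} is the implicit solution of a fixed-point problem. Alternatively, following Komatsu and Yamanaka for Denjoy-Carleman classes, we estimate the derivatives of the flow inductively by differentiating the ODE. We then use the Faà di Bruno estimates that are available for strongly log-convex sequences, and transfer them to the matrix setting by the usual choice of $\bN$ for each $\bM$ in the Roumieu quantifiers. This inductive Cauchy-type estimate for the flow is the step I expect to be the main obstacle, since it is not a formal consequence of \ref{InverseThm} and \ref{ClosedComp}.

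\textbf{Invariance and ellipticity.} For \emph{\ref{UltraWFinvar}} we follow the proof of \cite[Theorem 8.5.1]{HoeBook1} as adapted in \cite{FNRS20}. One characterizes $\WF_{\{\fM\}}$ via the FBI transform or via almost analytic extensions, and the latter are invariant under $\Rou{\fM}$-diffeomorphisms by \ref{ClosedComp}. Here we need the moderate growth condition in Definition \ref{NormalMatrix}, which controls the commutators in the stationary phase argument. Alternatively one uses the Hörmander-type sequence $(u_k)$ with cutoffs $\chi_k$ adapted to $\bM$ (Ehrenpreis cutoffs), which requires non-quasianalyticity. To avoid that restriction I would use the almost analytic or FBI characterization.

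For \emph{\ref{ScalarUltraEll}} we again follow \cite[Theorem 8.6.1]{HoeBook1}. We construct a microlocal parametrix of the elliptic symbol in $\Rou{\fM}$-symbol classes on a conic neighborhood of a noncharacteristic point, using the sequence of cutoffs and estimating the coefficients by moderate growth and derivation closedness \eqref{RDerivClosed}. I would carry out the estimate of the iterated symbol series as the key computation.

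\textbf{Uniqueness property.} Finally, suppose $\Rou{\fM}$ is quasianalytic and $u\in\Distr[I]{1}$ with $x_0\in\partial\supp u$. Assume, for contradiction, that $(x_0,1)\notin\WFR u$. Then, as in Hörmander's proof of \cite[Theorem 8.4.8]{HoeBook1}, $u$ is near $x_0$ the boundary value of a holomorphic function $F$ on a one-sided wedge. $F$ is the boundary value from a half-plane and vanishes on a real interval because $x_0\in\partial\supp u$, so $F\equiv 0$ by the identity theorem. Combining this with the FBI description of $\WF_{\{\fM\}}$ from \cite{FNRS20}, quasianalyticity yields that $u$ vanishes near $x_0$, which is a contradiction. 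The same argument applies to $(x_0,-1)$. In this last step I expect quasianalyticity to enter exactly as in the Denjoy-Carleman proof \cite[Theorem 7.2]{Fuerdoes20}, through the Denjoy-Carleman theorem applied to each $\bM\in\fM$.
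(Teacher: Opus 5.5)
The overall decomposition is the right one. The paper gives no argument of its own: it cites \cite{FNRS20} together with the previously quoted semiregularity, microlocality and regularity results. Your regularity, microlocality and invariance steps are acceptable as sketches. The genuine gap is in your proof of \ref{QuasiWF}.

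From $(x_0,1)\notin\WF_{\{\fM\}}u$ you infer that near $x_0$ the distribution $u$ is the boundary value of a function holomorphic on a one-sided wedge. That representation characterizes $(x_0,1)\notin\WF_A u$. Since $\WF_{\{\fM\}}u\subseteq\WF_A u$, your hypothesis is strictly weaker. It only yields $u=b(G)+f$ near $x_0$, with $G$ holomorphic on one side and $f$ of class $\Rou{\fM}$; put differently, $u$ is a one-sided boundary value of an almost analytic function, not a holomorphic one. Vanishing of $u$ on $(x_0,x_0+\varepsilon)$ then only tells you $b(G)=-f$ there. Neither the identity theorem nor reflection carries this to the other side of $x_0$.

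Your chain of implications also never uses quasianalyticity; you invoke it only afterwards, without saying how. If the argument worked as written, it would prove \ref{QuasiWF} for non-quasianalytic $\fM$ as well, which is false. As the paper observes, any $0\neq u\in\Rou[I]{\fM}\cap\D(I)$ has $\WF_{\{\fM\}}u=\emptyset$ but $\partial\supp u\neq\emptyset$.

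What is missing is a uniqueness argument that uses only $\Rou{\fM}$-microlocal information. It has to work with the bounded sequence $(u_k)$ of \eqref{DefWFfM}. Each $u_k$ has one-sided support near $x_0$ but Fourier decay in the direction $+1$ only up to order $k$, and only for a single $\bM\in\fM$. The argument must then exploit the quasianalyticity of that $\bM$. This is available here, since by \cite{Schindl16} $\Rou{\fM}$ is quasianalytic iff every $\bM\in\fM$ is. That argument is the content of the result in \cite{FNRS20}, and of its Denjoy--Carleman predecessor in \cite{Fuerdoes20}, which the paper cites.

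Two smaller points:
\begin{enumerate}
\item \ref{ODEclosed} needs no separate argument. The criterion of \cite{RainerSchindl16} characterizes all of \ref{ClosedComp}--\ref{ODEclosed} (the paper states it as ``regular if and only if''), so your monotonicity observation already settles regularity.
\item For \ref{ScalarUltraEll} you should not aim for a genuine parametrix in $\Rou{\fM}$-symbol classes. As the paper remarks, such a parametrix apparently requires non-quasianalyticity. What works, also in the quasianalytic case, is H\"ormander's truncated series applied to the $N$-dependent cut-offs $\lambda_{2N}$, as in the sketch for systems in Section~\ref{Section5}. These cut-offs lie outside the class and exist regardless of quasianalyticity. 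So your stated reason for avoiding such cut-off sequences in \ref{UltraWFinvar} is also mistaken, although the almost analytic route you choose there is the one of \cite{FNRS20}.
\end{enumerate}
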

It is not known if $\Beu{\fM}$ is weakly normal if $\fM$ is B-normal.
However for Denjoy-Carleman classes we have a positive result.
Since the definitions for normality coincide for singleton $\fM=\{\bM\}$, we say then that the weight sequence $\bM$ is normal.
\begin{Thm}[{\cite{FNRS20}}]
    If $\bM$ is a normal weight sequence then both
    $\Rou{\bM}$ and $\Beu{\bM}$ are both weakly normal sheaves.
\end{Thm}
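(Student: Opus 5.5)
The statement is a theorem cited from \cite{FNRS20}: if $\bM$ is a normal weight sequence, then both $\Rou{\bM}$ and $\Beu{\bM}$ are weakly normal. My plan is to check the axioms of a weakly normal sheaf one group at a time, using the results quoted earlier in this section for the singleton matrix $\fM=\{\bM\}$.

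First, normality of $\bM$ implies R- and B-semiregularity. By the semiregularity theorem both sheaves are therefore semiregular. By the microlocality theorem they are even microlocal, and in particular weakly microlocal: \ref{WFAnaDiff}, \emph{\ref{WFsmoothing}} and \emph{\ref{AnalEllWF}} then follow from the results of Section \ref{Section2}.

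Second, I would get regularity from the Rainer--Schindl characterization. For a singleton matrix the condition there reads $(M_j/j!)^{1/j}\le C(M_k/k!)^{1/k}$ for $j\le k$. Strong log-convexity makes $(M_k/k!)^{1/k}$ increasing, so this holds with $C=1$ in both the Roumieu and Beurling cases. Hence \ref{ClosedComp} and \ref{InverseThm} hold. For \ref{ODEclosed} I would invoke the classical result that Denjoy--Carleman classes with this almost-increasing property are closed under solving ODEs. This can be derived from the implicit function theorem in the Banach setting, or by a Cauchy--Kovalevskaya-type majorant argument on the derivatives of the solution.

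Third, I would show \ref{UltraWFinvar} and \ref{ScalarUltraEll}; this is where the real work lies. For invariance under $\sheaf{}$-diffeomorphisms I would follow H\"ormander's proof of Theorem 8.5.1 in \cite{HoeBook1}. One localizes with cutoffs of class $\sheaf{}$ satisfying the estimates of \cite[Thm.~1.4.2]{HoeBook1}, which is where $\WF_{\bM}$ uses bounded sequences $u_k$. One then estimates the oscillatory integral $\int \hat u_k(\eta)e^{i(\Phi(x)\cdot\eta - x\cdot\xi)}\chi(x)\,dx$ by repeated integration by parts with a first-order operator of class $\sheaf{}$. The moderate growth condition $M_{j+k}\le Q^{j+k+1}M_jM_k$ and the composition stability from step two control the derivatives of the coefficients after $k$ integrations.

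For \ref{ScalarUltraEll} I would construct, microlocally near a noncharacteristic point, a formal parametrix of class $\sheaf{}$ for $P$, in the spirit of \cite{HoeBook1} and \cite{Matsumoto}. The symbols of the terms in the parametrix expansion have to be estimated with Faà di Bruno-type bounds valid in the Denjoy--Carleman class. One then resums the parametrix using $\bM$-adapted cutoff sequences. I expect this step to be the main obstacle: one must keep the constants uniform in $k$ without assuming non-quasianalyticity, since the quasianalytic case is included. I would first try H\"ormander's approach from the analytic case (proof of \cite[Thm.~8.6.1]{HoeBook1}), which avoids compactly supported $\sheaf{}$-cutoffs by using the sequences $u_k$, and replace Cauchy estimates by the defining estimates of $\bM$.

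In the Beurling case, each step is repeated with the quantifiers reversed, for all $h>0$. The singleton structure ensures no interchange of sequences is needed.
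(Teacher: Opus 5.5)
The paper gives no argument beyond the citation of \cite{FNRS20}, and your reduction to the two axioms \ref{UltraWFinvar} and \ref{ScalarUltraEll} is the right one. Microlocality gives weak microlocality. Strong log-convexity makes $(M_k/k!)^{1/k}$ increasing, so the Rainer--Schindl criterion gives regularity. That cited statement already includes \ref{ODEclosed}, so your separate ODE argument is unnecessary. For the Beurling half, ``normal'' has to include $(M_k/k!)^{1/k}\to\infty$: $M_k=k!$ is R-normal, but $\analytic\not\subseteq\Beu{\bM}$.

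For \ref{UltraWFinvar} your non-stationary-phase route differs from \cite{FNRS20}, which works with almost analytic extensions. In the Roumieu case it looks viable, because strong log-convexity gives $\binom{p}{q}M_qM_{p-q}\le M_p$, which is what the $N$-fold integration by parts against H\"ormander's cut-offs needs. For \ref{ScalarUltraEll} the parametrix cannot be your main line. As the paper remarks, a parametrix \`a la \cite{Matsumoto} needs non-quasianalyticity and a Gevrey bound. Your fallback is what works: the truncated Neumann series from the proof of \cite[Theorem 8.6.1]{HoeBook1} with cut-off sequences, as sketched at the end of Section~\ref{Section5}.

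The genuine gap is your last paragraph: the Beurling case does not follow by reversing quantifiers. In the Neumann series, and equally in your integration by parts, the natural estimates have the form $C^{N+1}M_N\abs{\xi}^{-N}$. Here $C$ is fixed by the sup norms of the coefficients (of $P$, of $B=({}^{t}p)^{-1}$, of the phase) and by the combinatorics, roughly $2^N$ terms. These contributions occur once per factor of the expansion and carry no power of $h$. Beurling bounds on the coefficients do not remove this, since $\abs{D^\alpha a}\le C_h h^{\abs{\alpha}}M_{\abs{\alpha}}$ forces $C_h h\ge\sup\abs{\nabla a}/M_1$.

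To get the estimate for \emph{every} $h>0$ you have to pass to an auxiliary sequence $\bN$ with $(N_k/M_k)^{1/k}\to 0$. This $\bN$ must again be normal, and the coefficients and microlocal data must be of Roumieu type $\{\bN\}$. You then apply the Roumieu result to $\bN$. This is the extra input the paper invokes for $\Beu{\bM}$ (\cite[Lemma 7.5]{FNRS20}, used as in the proof of \cite[Theorem 7.4]{FNRS20}). It is exactly where the singleton hypothesis enters: the paper notes it is open whether $\Beu{\fM}$ is weakly normal for B-normal matrices. For \ref{UltraWFinvar}, \cite{FNRS20} sidesteps this via almost analytic extensions, which is why (N1) holds even for B-normal matrices; your integration-by-parts route would meet the same obstacle.

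So the singleton structure does not make an interchange of sequences unnecessary. The Beurling case needs precisely such an interchange, and your proposal omits it.
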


The one question still open is if $\Rou{\fM}$ is in fact a normal sheaf for
normal weight matrices $\fM$.
Indeed, we have that $\Rou{\fM}$ satisfies \ref{UltraWFInv0} if $\fM$ is R-normal
and, moreover, \ref{UltraWFInv0} holds in $\Beu{\fM}$ when $\fM$ is B-normal,
see \cite{FNRS20}.

Thence we need only to verify \ref{VectorMicroEll}. 
\begin{Thm}
    Let $\fM$ be a R-normal weight matrix and $\Omega\subseteq\R[n]$ be an open set. 
    If $P=(P_{jk})_{j,k}$ is a linear differential operator acting on $\Distr[\Omega;{\R[\nu]}]{n}$ with coefficients in $\Rou[\Omega]{\fM}$ then 
    \begin{equation*}
        \WF_{\{\fM\}} u\subseteq\WF_{\{\fM\}}Pu \cup\Char P \qquad \fa u\in\Distr[\Omega;{\C[\nu]}]{n}.
    \end{equation*}
\end{Thm}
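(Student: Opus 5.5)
The plan is to adapt the classical scalar argument in the Denjoy--Carleman setting, cf.~\cite[Theorem 8.6.1]{HoeBook1} and its weight matrix version in \cite{FNRS20}, to systems. The key point is that for a square system with invertible principal symbol one can reduce to scalar-type Fourier estimates by inverting the principal symbol matrix pointwise. Fix $(x_0,\xi_0)\notin\WF_{\{\fM\}}Pu\cup\Char P$. Then $p^d(x_0,\xi_0)$ is an invertible $\nu\times\nu$ matrix. By continuity and homogeneity, there are a compact neighborhood $K$ of $x_0$, a closed conic neighborhood $\Gamma$ of $\xi_0$ and $c>0$ such that
\begin{equation*}
\bigl\lVert p^d(x,\xi)^{-1}\bigr\rVert\leq c^{-1}\abs{\xi}^{-d}, \qquad x\in K,\ \xi\in\Gamma .
\end{equation*}
Moreover, the entries of $p^d(x,\xi)^{-1}=\operatorname{adj}p^d/\det p^d$ lie in $\Rou{\fM}$ in $x$, by the algebra property and \ref{Inversion}. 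The latter is available since R-normal matrices give regular sheaves by \cite{RainerSchindl16}; strong log-convexity yields the required monotonicity of $(M_k/k!)^{1/k}$.

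The main step is the construction of an Ehrenpreis--H\"ormander type sequence of cutoffs together with an approximate microlocal parametrix. First, I choose $\chi_N\in\D(K)$ with $\chi_N=1$ near $x_0$ and $\abs{D^{\alpha+\beta}\chi_N}\leq C^{\abs\alpha+1}(CN)^{\abs\beta}$ for $\abs\beta\leq N$. These exist for every $N$ by \cite[Theorem 1.4.2]{HoeBook1}. Following the scalar proof, I then construct the formal parametrix by the finite Neumann-type series
\begin{equation*}
\Phi_N(x,\xi)=\sum_{j<N/d}\phi_j(x,\xi),
\end{equation*}
where $\phi_0=\prescript{t}{}{(p^d)^{-1}}$ and the higher terms $\phi_j$ are determined recursively by the Leibniz formula for $\prescript{t}{}{P}(e^{-i\langle x,\xi\rangle}\chi_N\phi)$. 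The only difference from the scalar case is that one multiplies matrices on the correct side. Since all $\phi_j$ are rational in $\xi$ with the fixed denominator $\det p^d$, the estimates reduce entrywise to scalar ones.

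The heart of the matter is then the bound, uniform in $j,N$,
\begin{equation*}
\abs{D_x^\alpha\phi_j(x,\xi)}\leq C\,h^{j+\abs\alpha}\,(M_{j+\abs\alpha}/?)\,\abs\xi^{-d-j}
\end{equation*}
for some $\bM\in\fM$. Here the growth of the coefficients is controlled by some $\bN\in\fM$. The strong log-convexity and the almost-stability $M_{j+k}\leq Q^{j+k+1}N_jN_k$ are exactly what make the induction close, as in \cite{FNRS20}. In the Roumieu weight-matrix setting the sequence may shift finitely many times within $\fM$. Because the number of shifts is bounded independently of $j$ (one application of the almost-stability absorbs products of coefficient derivatives), a single $\bM$ works.

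With this in hand, I write $\widehat{\chi_N u}(\xi)$ as the pairing of $Pu$ with $\chi_N\Phi_N e^{-i\langle\cdot,\xi\rangle}$ plus remainder terms. The remainder terms involve derivatives of $\chi_N$ (supported away from $x_0$, hence controlled only through the finite order of $u$ and the bounds on $\chi_N$) and the truncation error of the series at order $N$. Choosing $N\sim\lambda$-dependent, in the usual way, and using the hypothesis on $\widehat{\chi_N Pu}$ in $\Gamma$, gives \eqref{DefWFfM} for the bounded sequence $u_N=\chi_N u$. Hence $(x_0,\xi_0)\notin\WF_{\{\fM\}}u$.

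I expect the main obstacle to be the uniform estimate of $D^\alpha\phi_j$ in the weight-matrix case. There, derivatives of the inverse matrix and the recursive Leibniz terms must be controlled by one sequence of $\fM$, rather than by a sequence that drifts upward with $j$. I would first try to handle this by a majorant-series argument via Fa\`a di Bruno for $1/\det p^d$, using strong log-convexity, before attempting a direct induction.
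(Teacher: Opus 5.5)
Your overall plan matches the paper's sketch: invert the principal symbol matrix on $K\times\Gamma$, use Ehrenpreis--H\"ormander cutoffs and a truncated Neumann series, split $\hat u_N$ into a pairing with $f=Pu$ plus a remainder, and finally collapse to one $\bM$ via the total order of $\fM$ and derivation closedness. However, the way you place the cutoff breaks the argument.

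You take the ansatz $e^{-i\langle x,\xi\rangle}\chi_N\Phi_N$, with $\Phi_N$ built from $\phi_0={}^{t}(p^d)^{-1}$ independently of $\chi_N$. You then treat the terms where derivatives fall on $\chi_N$ as harmless because they are supported away from $x_0$. They are not harmless. The terms with one derivative on $\chi_N$ have the form $(D_j\chi_N)(x)\psi_j(x,\xi)$ with $\psi_j$ homogeneous of degree $-1$ in $\xi$. Pairing them with $u$ through its finite order $\mu$ therefore gives only $O(\abs{\xi}^{\mu-1})$.

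Support away from $x_0$ does not help. Nothing is known about $u$ on $\supp\nabla\chi_N$; regularity there is part of what you are proving. Moreover, a Fourier estimate at $\xi$ sees all of $\supp\nabla\chi_N$. Discarding such terms would only be legitimate with a pseudolocal parametrix, i.e.\ an ultradifferentiable pseudodifferential calculus. That is not available under these hypotheses (cf.\ the remark on \cite{Matsumoto}; the theorem must also cover quasianalytic $\fM$). A symptom of the problem is that your scheme never uses the $N$-dependent derivative bounds on $\chi_N$; a fixed cutoff would do just as well.

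The missing idea, which is the core of \cite[Theorem 8.6.1]{HoeBook1} and of the paper's proof, is to put the cutoff on the right-hand side of the recursion. Solve ${}^{t}P g=e^{-i\langle x,\xi\rangle}\chi_{2N}e^\tau$ approximately, where $e^\tau$ is the $\tau$-th unit vector, with the ansatz $g=e^{-i\langle x,\xi\rangle}B(x,\xi)w$ and $B={}^{t}(p^d)^{-1}$. This leads to $w-Rw=\chi_{2N}e^\tau$, where $R=R_1+\dots+R_d$ and each $R_j$ has order $j$ with coefficients homogeneous of degree $-j$ in $\xi$. Then take $w_N=\sum_{j_1+\dots+j_k\leq N-d}R_{j_1}\cdots R_{j_k}\chi_{2N}e^\tau$.

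With this choice there are no separate cutoff-derivative terms. The only error is $\rho_N$, made of products of total order $j_1+\dots+j_k>N-d$. So every derivative, whether it hits a coefficient or the cutoff, is paid for by a factor $\abs{\xi}^{-1}$. Each derivative on $\chi_{2N}$ costs $CN$, which is why bounds up to order about $2N$ are needed. Since $\liminf(M_k/k!)^{1/k}>0$, we have $N^N\leq e^N N!\leq C^N M_N$. This gives $\abs{\langle u,e^{-i\langle\cdot,\xi\rangle}\rho_N\rangle}\leq C_1h_1^N\abs{\xi}^{\mu+d-N}M_N$.

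Three consequences for your proposal follow:
\begin{itemize}
\item The estimate you call the heart of the matter, which you leave with a placeholder, must be of mixed type in $N$ and $\bM$, not uniform in $N$.
\item Your truncation at $j<N/d$ should be at total order $N-d$.
\item The main term is $\langle f,e^{-i\langle\cdot,\xi\rangle}Bw_N\rangle$, whose amplitude depends on $\xi$ and $N$; it is not $\widehat{\chi_N Pu}$. You therefore need an estimate for such pairings derived from $(x_0,\xi_0)\notin\WF_{\{\fM\}}f$. This estimate holds only for $\abs{\xi}\geq (M'_N)^{1/N}$ for some $\bM'\in\fM$.
\end{itemize}

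Finally, in the bounded-sequence definition you cannot choose $N$ depending on $\lambda$. The bound is required for all pairs $(N,\xi)$. The range $\abs{\xi}\leq (M'_N)^{1/N}$ is handled separately, using the boundedness of $(u_N)$ in $\E'$ via Banach--Steinhaus.
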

\begin{proof}
    We give here only a sketch of the proof, since we need only
    to combine the proofs of \cite[Theorem 7.1]{FNRS20} and 
    of \cite[Theorem 6.1]{Fuerdoes20}.

    Since we can identify closed subsets of $S^\ast \Omega$ with closed conic subsets
    of $T^\ast \Omega\!\setminus\{0\}$ we consider $\WF_{\{\fM\}}$ and $\Char P$
    as subsets of $T^\ast \Omega\setminus\{0\}$. 
    We note also it is enough to show the implication
    \begin{equation*}
        (x_0,\xi_0)\notin\WF_{\{\fM\}}Pu\cup\Char P\Longrightarrow 
        (x_0,\xi_0)\notin\WF_{\{\fM\}}u
    \end{equation*}
    for all $u\in\Distr[\Omega,{\C[\nu]}]{n}$.

    Hence assume that $(x_0,\xi_0)\notin (x_0,\xi_0)\notin\WF_{\{\fM\}}f\cup\Char P$ where $f=Pu$. Then there has to be a compact neighborhood $K$ of $x_0$
    in $\Omega$ and a closed conic neighborhood $V\subseteq\R^n\setminus\{0\}$
    of $\xi_0$
    such that
    \begin{align*}
        \det p(x,\xi)&\neq 0 & (x,\xi)&\in K\times V\\
        K\times V\cap \WF_{\{\fM\}} f_j&=\emptyset & j&=1,\dotsc,\nu,
    \end{align*}
    where $p(x,\xi)$ is the principal symbol of $P$.

    Furthermore, we consider a formal adjoint $Q=P^t$ of $P$ with respect to
    the pairing $\langle f,g\rangle=\sum \int f_jg_j$ defined for 
    $f,g\in\D(\Omega;\C[\nu])$. Then $Q=(Q_{jk})$ where $Q_{jk}=P^t_{kj}$ is the formal adjoint
    of the scalar operator $P_{kj}$.
We also choose a sequence of test functions $\lambda_N\in\D(K)$ satisfying 
$\lambda_N\vert_V\equiv 1$ on a fixed neighborhood $U$ of $x_0$ and
for all $\alpha\in \N_0^n$ there are constants $C_\alpha,h_\alpha>0$ such that
\begin{equation*}
    \sup_{x\in K}\abs*{D^{\alpha+\beta}\lambda(x)}
    \leq C_\alpha h^{\abs{\beta}}_\alpha N^{\abs{N}}\qquad \beta\in\N_0^n,\,\abs{\beta}\leq N.
\end{equation*}
    If $u=(u_1,\dotsc,u_\nu)\in\Distr[\Omega,{\C[\nu]}]{n}$ then
    the sequence $u_N^\tau=\lambda_{2N}u^\tau$ is bounded in $\E^\prime$
    and each of these distributions equals $u^\tau$ in $U$ for every
    $\tau=1,\dotsc,\nu$.
    
    It follows that by \eqref{DefWFfM} it is enough  to show that there are some $\bM\in\fM$ and
    a constant $Q>0$ such
    \begin{equation}\label{Target}
        \sup_{\substack{\xi\in V\\ N\in\N_0}} \frac{\abs*{\xi}^N \abs*{\hat{u}^\tau_N(\xi)}}{Q^NM_N}<\infty \qquad \tau=1,\dotsc,\nu.
    \end{equation}
    As in the proofs of \cite[Theorem 4.1]{AJO10}
    \cite[Theorem 6.1]{Fuerdoes20} or \cite[Theorem 7.1]{FNRS20} we want to adapt the proof
    of \cite[Theorem 8.6.1]{HoeBook1}. In order to do so, set
    $\Lambda_N^\tau=\lambda_N e^\tau\in\D(K;{\C[\nu]})$ where $e^\tau$ is the
    $\tau$-th unit vector in $\C[\nu]$, for $\tau=1,\dotsc,\nu$.
    We want to solve the equation $Qg^\tau=e^{-ix\xi}\Lambda^\tau_{2N}$
    and start by making the ansatz $g^\tau=e^{-ix\xi}B(x,\xi)w^\tau$
    where $B(x,\xi)$ is the inverse matrix of the transpose of $p(x,\xi)$
    which is well-defined for $(x,\xi)\in K\times V$ and homogeneous 
    of degree $-d$ in $\xi$.
    This leads to the following equation for $w^\tau$:
    \begin{equation}\label{LastThm-E1}
        \omega^\tau-R\omega^\tau=\Lambda^N_{2N}.
    \end{equation}
    Here $R=R_1+\dots+R_d$ with $R_j\abs{\xi}^j$ being a (matrix) differential operator of order $j$ with coefficients in $\Rou{\fM}$ and which are
    homogeneous of degree $0$ in $\xi$ for $x\in K$ and $\xi\in V$.
    As noted in \cite[p.~306f]{HoeBook1} a formal solution of 
    \eqref{LastThm-E1} would be $W^\tau=\sum R^k \Lambda_{2N}$ but we 
    are not allowed to consider arbitrary high order derivatives.
    Hence we consider approximate solutions given by
    \begin{equation*}
        W_N^\tau=\sum_{j_1+\dots+j_k\leq N-d}R_{j_1}\dots R_{j_k}\Lambda_{2N}^\tau
    \end{equation*}
    for $\tau=1,\dotsc,\nu$. Then
    \begin{equation*}
        w^\tau_N-Rw^\tau_N=\Lambda_{2N}^\tau-\rho_N^\tau.
    \end{equation*}
    Then \cite[(6-8)]{Fuerdoes20} gives that
    \begin{equation}\label{LastThmE2}
        \hat{u}_N^\tau(\xi)=
        \bigl\langle f,e^{-\,.\,\xi}B(\,.\,,\xi)w^\tau_N(\,.\,,\xi)\bigr\rangle
        +\bigl\langle u, e^{-\,.\,\xi}\rho_N^\tau(\,.\,,\xi)\bigr\rangle
    \end{equation}
    and we continue by estimating the right-hand side of \eqref{LastThmE2}.
    Observe that we can use the deduction of \cite[(7.9)]{FNRS20} 
    (cf.~\cite[(6-9)]{Fuerdoes20}) to show that there are some $\bM\in\fM$
    and constants $C_1,h_1>0$ such that  
    \begin{equation}\label{LastThmE3}
      \abs*{\bigl\langle u, e^{-\,.\,\xi}\rho_N^\tau(\,.\,,\xi)\bigr\rangle}
      \leq C_1h^{N}_1\abs{\xi}^{\mu+d-N}M_N,\qquad \fa N\in\N_0
    \end{equation}
    for $\xi\in V$, $\abs{\xi}>1$. Here $\mu$ denotes the order of $u$ in some neighborhood of $K$.
    On the other hand, a comparison of \cite[pp.~339--341]{Fuerdoes20}
    with \cite[pp.~43-44]{FNRS20} leads us to the existence of
    $\bM^\prime,\bM^{\prime\prime}\in\fM$ and $C_2,h_2>0$ such that
    \begin{equation}\label{LastThmE4}
\abs*{\bigl\langle f,e^{-\,.\,\xi}B(\,.\,,\xi)w^\tau_N(\,.\,,\xi)\bigr\rangle}
\leq C_2 h_2^N M^{\prime\prime}_N\abs*{\xi}^{\mu+n-N} \qquad \fa N
    \end{equation}
    for $\xi\in V$, $\abs{\xi}\geq \sqrt[N]{M^\prime_N}$.
Finally, since the sequence $(u^\tau_N)_N$ is bounded in $\E^{\prime,\mu}$,
the Banach-Steinhaus Theorem implies that  there are constants $C_3,h_3>0$
 and $\widetilde{\bM}\in\fM$ such that
 \begin{equation}\label{LastThmE5}
    \abs*{\xi}^N\abs*{\hat{u}^\tau_N}\leq C_3h_3 \widetilde{M}_{N}\qquad \fa N\in\N_0
\end{equation}
 for $\abs*{\xi}\leq \sqrt[N]{M_N^\prime}$, cf.~\cite[(6-12)]{Fuerdoes20}
 and \cite[(7.15)]{FNRS20}

 According to Definition \ref{DefWeight}(ii) we can assume that
 $\bM=\bM^{\prime\prime}=\widetilde{\bM}$ and thus combining \eqref{LastThmE3},
 \eqref{LastThmE4} and \eqref{LastThmE5} and using \eqref{RDerivClosed}
shows that \eqref{Target} holds.
\end{proof}
Using \cite[Lemma 7.5]{FNRS20} analogous as in the proof of \cite[Theorem 7.4]{FNRS20} we are going to obtain \ref{VectorMicroEll} for $\Beu{\bM}$ with
$\bM$ being a normal weight sequence.
\begin{Cor}
Let $\bM$ be a normal weight sequence. Then both $\Rou{\bM}$ and $\Beu{\bM}$
are normal ultradifferentiable classes.
\end{Cor}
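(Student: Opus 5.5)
The plan is to check the definition of a normal ultradifferentiable sheaf directly, once for $\sheaf{}=\Rou{\bM}$ and once for $\sheaf{}=\Beu{\bM}$. Normality requires three things: the sheaf is microlocal, it is regular, and it satisfies \ref{UltraWFInv0} and \ref{VectorMicroEll}. A normal weight sequence $\bM$ gives the singleton matrix $\fM=\{\bM\}$. This matrix is both R-normal and B-normal, since the two definitions coincide for singletons. Hence it is in particular R- and B-semiregular, and the cited result of \cite{FNRS20} and \cite{fuerdoes2025} shows that both $\Rou{\bM}$ and $\Beu{\bM}$ are microlocal.

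For regularity I will use the characterization of \cite{RainerSchindl16}. With $\fM=\{\bM\}$ the condition in both the Roumieu and the Beurling case reduces to
\[
\left(\frac{M_j}{j!}\right)^{1/j}\leq C\left(\frac{M_k}{k!}\right)^{1/k},\qquad j\leq k .
\]
This holds with $C=1$, because strong log-convexity makes $(M_k/k!)^{1/k}$ increasing, as remarked before Definition \ref{NormalMatrix}. Next, \ref{UltraWFInv0} holds for $\Rou{\fM}$ when $\fM$ is R-normal and for $\Beu{\fM}$ when $\fM$ is B-normal, by the statement recorded from \cite{FNRS20} just before the last theorem. This leaves only \ref{VectorMicroEll}.

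In the Roumieu case, \ref{VectorMicroEll} is exactly the preceding theorem applied to $\fM=\{\bM\}$. That theorem gives $\WF_{\{\bM\}}u\subseteq\WF_{\{\bM\}}Pu\cup\Char P$. The other inclusion $\WF_{\{\bM\}}Pu\subseteq\WF_{\{\bM\}}u$ follows componentwise from \emph{\ref{WFsmoothing}} together with \ref{BasicWF0}, since $(Pu)_j=\sum_k P_{jk}u_k$.

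The main obstacle is the Beurling case, for which no vector-valued elliptic regularity statement has been proven above. My plan is to reduce it to the Roumieu estimates through \cite[Lemma 7.5]{FNRS20}, as in the proof of \cite[Theorem 7.4]{FNRS20}. The idea is the following. Suppose $P$ has coefficients in $\Beu{\bM}$ and $(x_0,\xi_0)\notin\WF_{(\bM)}Pu\cup\Char P$. I would construct a normal weight sequence $\bN$ with $N_k/M_k\to0$ faster than every geometric sequence, chosen so that the coefficients of $P$ lie in $\Rou{\bN}$ and $(x_0,\xi_0)\notin\WF_{\{\bN\}}Pu$ for a prescribed $\bN$ of this type. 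Then I apply the Roumieu theorem with the weight $\bN$ and get $(x_0,\xi_0)\notin\WF_{\{\bN\}}u$. Finally I use the representation $\WF_{(\bM)}u=\bigcup_{\bN}\WF_{\{\bN\}}u$, taken over such $\bN$ (this is what Lemma 7.5 provides), to conclude $(x_0,\xi_0)\notin\WF_{(\bM)}u$.

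The delicate point is to choose a single $\bN$ that works simultaneously for all coefficients of $P$, for all components of $Pu$, and for the localizing sequence $u_N$, while keeping $\bN$ normal, that is, strongly log-convex and with moderate growth. I expect this to go through because there are only finitely many data to control.
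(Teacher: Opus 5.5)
Your proposal follows the paper's route: microlocality, regularity and \ref{UltraWFInv0} come from the cited results, \ref{VectorMicroEll} for $\Rou{\bM}$ is the preceding theorem with $\fM=\{\bM\}$, and for $\Beu{\bM}$ the paper likewise only invokes \cite[Lemma 7.5]{FNRS20} as in the proof of \cite[Theorem 7.4]{FNRS20}; your derivation of $\WF_{\{\bM\}}Pu\subseteq\WF_{\{\bM\}}u$ from \ref{WFsmoothing} and \ref{BasicWF0} fills in a point the paper leaves implicit. One correction to your last step: if $(N_k/M_k)^{1/k}\to 0$, a $\{\bN\}$-estimate gives the $(\bM)$-estimates for every $h>0$ with the same localizing sequence, so $\WF_{(\bM)}u\subseteq\WF_{\{\bN\}}u$ and the representation is $\WF_{(\bM)}u=\bigcap_{\bN}\WF_{\{\bN\}}u$, an intersection and not a union; your conclusion needs only this elementary inclusion, and the nontrivial direction is the one you already use when choosing $\bN$ for $Pu$ and the coefficients (note also that, as with the paper's claim that R- and B-normality coincide for singletons, the Beurling half tacitly needs $(M_k/k!)^{1/k}\to\infty$, which is part of B-semiregularity but not of R-normality).
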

\begin{Rem}
    The advantage of working with classes of weight matrices is that they provide
    a unified framework for the two most well-known families of ultradifferentiable classes, Denjoy-Carleman classes defined by weight sequences
    and Braun-Meise-Taylor classes given by weight functions.
    Note that in general these classes are distinct of each other, see \cite{BMM07}.
    
    For the definition of weight functions and Braun-Meise-Taylor classes
    we may refer, e.g., to \cite{RainerSchindl14} and summarize here only the properties of Braun-Meise-Taylor classes.
    \begin{Thm}[cf.~{\cite{AJO10}, \cite{FNRS20}, \cite{fuerdoes2025}}]
Let $\omega$ be a weight function in the sense of \cite{BMT90}.
\begin{enumerate}[label=(\roman*)]
    \item If $\omega(t)=O(t)$ when $t\rightarrow \infty$ then $\Rou{\omega}$
    is semiregular and microlocal. If $\omega$ is additionally equivalent to a subadditive
    weight function then $\Rou{\omega}$ is normal.
    \item If $\omega(t)=o(t)$, $t\rightarrow\infty$ then $\Beu{\omega}$ is
    a semiregular and microlocal sheaf. If $\omega$ is also equivalent to 
       a subadditive weight function then $\Beu{\omega}$ is normal.
\end{enumerate}
    \end{Thm}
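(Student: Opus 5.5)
The plan is to reduce the statement about Braun–Meise–Taylor classes to the weight-matrix results already recorded in this section. The reduction goes through the weight matrix associated with $\omega$. Following \cite{RainerSchindl14}, to a weight function $\omega$ with Young conjugate $\varphi^\ast$ of $\varphi(t)=\omega(e^t)$ we attach $\fM_\omega=\{\bM^\ell\}_{\ell>0}$, where $M^\ell_k=\exp(\varphi^\ast(\ell k)/\ell)$. Each $\bM^\ell$ is log-convex and the family is totally ordered in $\ell$, so $\fM_\omega$ is a weight matrix in the sense of Definition \ref{DefWeight}. The key fact, from \cite{RainerSchindl14}, is that $\Rou{\omega}=\Rou{\fM_\omega}$ and $\Beu{\omega}=\Beu{\fM_\omega}$ as sheaves. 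We will also need that the wavefront sets coincide, $\WF_{\{\omega\}}=\WF_{\{\fM_\omega\}}$ and likewise in the Beurling case. This is proved in \cite{FNRS20} by comparing Fourier estimates $e^{-\ell\omega(\lambda)}$ with $\inf_k h^kM^\ell_k/\lambda^k$ through the associated function of $\bM^\ell$.

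The first step is to translate the growth hypotheses into semiregularity. The condition $\omega(t)=O(t)$ is equivalent to $\liminf_k (M^\ell_k/k!)^{1/k}>0$ for all $\ell$, and $\omega(t)=o(t)$ is equivalent to $(M^\ell_k/k!)^{1/k}\to\infty$. The derivation-closedness conditions \eqref{RDerivClosed} and their Beurling analogue follow from the property $\exists H\,\forall\ell:\ M^\ell_{k+1}\le H^{k+1}M^{2\ell}_k$. That property is derived from convexity of $\varphi^\ast$ together with $\omega(2t)=O(\omega(t))$. Hence $\fM_\omega$ is R- (resp. B-)semiregular. Semiregularity and microlocality of $\Rou{\omega}$ (resp. $\Beu{\omega}$) then follow from \cite{FNRS20}, the theorem above, and the identification of the sheaves and wavefront sets.

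The second step handles normality. Equivalent weight functions give identical classes and wavefront sets, so we may assume $\omega$ is itself subadditive. Subadditivity yields the mixed condition $M^\ell_{j+k}\le Q^{j+k+1}M^{\ell'}_jM^{\ell'}_k$. It also yields strong log-convexity after passing to an equivalent matrix, using that $t\mapsto\omega(t)/t$ is essentially decreasing. Consequently $\fM_\omega$ is R-normal (resp. B-normal). For the Roumieu case, the theorem preceding this remark shows that \ref{VectorMicroEll} holds, and \cite{FNRS20} provides \ref{UltraWFInv0}. Regularity comes from the criterion of \cite{RainerSchindl16}, whose inequality follows from the increasing behaviour of $(M^\ell_k/k!)^{1/k}$.

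I expect the main obstacle to be \ref{VectorMicroEll} for $\Beu{\omega}$, since the text notes that the B-normal matrix case is open. The plan there is to avoid the general matrix argument and use the extra structure of $\omega$ directly. First I would try the representation $\Beu{\omega}=\bigcap_{\sigma}\Rou{\sigma}$ over weight functions $\sigma=o(\omega)$ that are subadditive. Then $\WF_{(\omega)}u=\overline{\bigcup_\sigma\WF_{\{\sigma\}}u}$, and the Roumieu result applies to each $\sigma$, in the spirit of \cite[Lemma 7.5]{FNRS20}. The delicate point is choosing $\sigma$ uniformly for the finitely many components of $Pu$ and the coefficients of $P$. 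I would handle it with the standard lemma that any countable family of $o(\omega)$ estimates is dominated by a single subadditive $\sigma=o(\omega)$.
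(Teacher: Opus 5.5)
The reduction to $\fM_\omega$ works for semiregularity and microlocality of both classes and for part (i); the Beurling half of normality does not go through as written.

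\textbf{Part (i) and the minor slip.} For $\Rou{\omega}$, passing to an equivalent strongly log-convex matrix (e.g.\ via the least concave majorant of $\omega$) gives R-normality. Part (i) then follows from the R-normal results quoted in Section~\ref{Section5}. One small slip: $M^\ell_{k+1}\le H^{k+1}M^{2\ell}_k$ cannot hold with $H$ independent of $\ell$. At $k=0$ it reads $M^\ell_1\le H$, but $M^\ell_1=e^{\varphi^\ast(\ell)/\ell}\to\infty$. What you need is $M^\ell_{j+k}\le M^{2\ell}_jM^{2\ell}_k$, hence $M^\ell_{k+1}\le M^{2\ell}_1M^{2\ell}_k$, and this follows from convexity of $\varphi^\ast$ alone.

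\textbf{The gap: \ref{VectorMicroEll} for $\Beu{\omega}$.} Your representation runs in the wrong direction. If $\sigma=o(\omega)$, then $\Rou{\omega}\subseteq\Beu{\sigma}\subseteq\Rou{\sigma}$ and $\WF_{\{\sigma\}}u\subseteq\WF_{\{\omega\}}u$. Consequently $\bigcap_{\sigma=o(\omega)}\Rou{\sigma}\supseteq\Rou{\omega}$ and $\overline{\bigcup_{\sigma=o(\omega)}\WF_{\{\sigma\}}u}\subseteq\WF_{\{\omega\}}u$, so these formulas describe the Roumieu objects, not the Beurling ones. Your identity would give $\WF_{(\omega)}u\subseteq\WF_{\{\omega\}}u$ for all $u$, and by \ref{WFSing} this would force $\Rou{\omega}=\Beu{\omega}$. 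That is false already for Gevrey weights $\omega(t)=t^{1/s}$. What your argument actually proves is only $\WF_{\{\omega\}}u\subseteq\WF_{(\omega)}Pu\cup\Char P$. A sign of the reversal: in your setting the uniformity issue you flag never arises, since $\Beu{\omega}\subseteq\Rou{\sigma}$ for every $\sigma=o(\omega)$.

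\textbf{What the correct version needs.} The Komatsu-type description uses weights \emph{larger} than $\omega$. For $\omega=o(\sigma)$ one has $\Rou{\sigma}\subseteq\Beu{\omega}$ and $\WF_{(\omega)}u\subseteq\WF_{\{\sigma\}}u$. Locally, $\Beu{\omega}=\bigcup_{\omega=o(\sigma)}\Rou{\sigma}$ and $\WF_{(\omega)}u=\bigcap_{\omega=o(\sigma)}\WF_{\{\sigma\}}u$. So classes are a union and wave front sets an intersection, as in Komatsu's lemma $\Beu{\bM}=\bigcup\Rou{\bN}$ over $\bN$ with $(N_k/M_k)^{1/k}\to 0$.

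Now take $(x_0,\xi_0)\notin\WF_{(\omega)}Pu\cup\Char P$. Each coefficient of $P$ near $x_0$, and each component of $Pu$ microlocally at $(x_0,\xi_0)$, is of class $\{\sigma_i\}$ for some $\sigma_i$ with $\omega=o(\sigma_i)$. Here uniformity is a real issue. You need a single weight function $\sigma$ with the following properties:
\begin{itemize}
\item $\sigma$ is equivalent to a subadditive weight, and $t\mapsto\sigma(e^t)$ is convex;
\item $\omega=o(\sigma)$ and $\sigma(t)=O(t)$;
\item $\sigma=O(\sigma_i)$ for all $i$.
\end{itemize}
That is, you need a lemma squeezing a subadditive weight strictly between $\omega$ and the $\sigma_i$. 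The lemma you cite, dominating by some $\sigma=o(\omega)$, is the wrong one. With the correct $\sigma$, part (i) applied to $\Rou{\sigma}$ gives $(x_0,\xi_0)\notin\WF_{\{\sigma\}}u\supseteq\WF_{(\omega)}u$.

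For comparison, the paper does not attempt this reduction. It simply cites \cite{AJO10}, \cite{FNRS20} and \cite{fuerdoes2025} for these properties of Braun--Meise--Taylor classes, and it notes that the B-normal weight-matrix case is open.
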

    
\end{Rem}
\bibliographystyle{plainurl}
\bibliography{References}

@article{AJO10,
 author = {Albanese, A. A. and Jornet, D. and Oliaro, A.},
 title = {Quasianalytic wave front sets for solutions of linear partial differential operators},
 fjournal = {Integral Equations and Operator Theory},
 journal = {Integral Equations Oper. Theory},
 issn = {0378-620X},
 volume = {66},
 number = {2},
 pages = {153--181},
 year = {2010},
 language = {English},
 doi = {10.1007/s00020-010-1742-6},
 zbMATH = {5774247},
 Zbl = {1208.46037}
}

@book{BERbook,
 author = {Baouendi, M. Salah and Ebenfelt, Peter and Rothschild, Linda Preiss},
 title = {Real submanifolds in complex space and their mappings},
 fseries = {Princeton Mathematical Series},
 series = {Princeton Math. Ser.},
 volume = {47},
 isbn = {0-691-00498-6},
 year = {1999},
 publisher = {Princeton, NJ: Princeton University Press},
 language = {English},
 zbMATH = {1249530},
 Zbl = {0944.32040}
}

@book {BCHbook,
    AUTHOR = {Berhanu, Shiferaw and Cordaro, Paulo D. and Hounie, Jorge},
     TITLE = {An introduction to involutive structures},
    SERIES = {New Mathematical Monographs},
    VOLUME = {6},
 PUBLISHER = {Cambridge University Press, Cambridge},
      YEAR = {2008},
     PAGES = {xii+392},
      ISBN = {978-0-521-87857-9},
   MRCLASS = {32V05 (35-02 35D10 35F05 35N10 58J10)},
  MRNUMBER = {2397326},
MRREVIEWER = {Alberto\ Parmeggiani},
       DOI = {10.1017/CBO9780511543067},
       URL = {https://doi.org/10.1017/CBO9780511543067},
}

@article{BierstoneMilman,
 author = {Bierstone, Edward and Milman, Pierre D.},
 title = {Resolution of singularities in {Denjoy}-{Carleman} classes},
 fjournal = {Selecta Mathematica. New Series},
 journal = {Sel. Math., New Ser.},
 issn = {1022-1824},
 volume = {10},
 number = {1},
 pages = {1--28},
 year = {2004},
 language = {English},
 doi = {10.1007/s00029-004-0327-0},
 zbMATH = {2103782},
 Zbl = {1078.14087}
}

@misc{Bony76,
    author = {Bony, J. M.},
    title = {Extensions du th{\'e}or{\`e}me de {Holmgren}},
    year = {1976},
    language = {French},
    howpublished = {S{\'e}min. {Goulaouic}-{Schwartz} 1975-1976, {\'E}quat. d{\'e}riv. part. {Anal}. fonct., {Expos{\'e}} {No}.{XVII}, 12 p. (1976).},
    url = {https://eudml.org/doc/111657},
    zbMATH = {3525577},
    Zbl = {0336.35003}
}

@article{Bony69,
    author = {Bony, Jean Michel},
    title = {An extension of {Holmgren}'s theorem on the uniqueness of the {Cauchy} problem},
    fjournal = {Comptes Rendus Hebdomadaires des S{\'e}ances de l'Acad{\'e}mie des Sciences, S{\'e}rie A},
    journal = {C. R. Acad. Sci., Paris, S{\'e}r. A},
    issn = {0366-6034},
    volume = {268},
    pages = {1103--1106},
    year = {1969},
    language = {French},
    zbMATH = {3274933},
    Zbl = {0172.38001}
}

@article{BMT90,
 author = {Braun, R. W. and Meise, R. and Taylor, B. A.},
 title = {Ultradifferentiable functions and {Fourier} analysis},
 fjournal = {Results in Mathematics},
 journal = {Result. Math.},
 issn = {1422-6383},
 volume = {17},
 number = {3-4},
 pages = {206--237},
 year = {1990},
 language = {English},
 doi = {10.1007/BF03322459},
 zbMATH = {5550},
 Zbl = {0735.46022}
}

@article{BMM07,
 author = {Bonet, Jos{\'e} and Meise, Reinhold and Melikhov, Sergej N.},
 title = {A comparison of two different ways to define classes of ultradifferentiable functions},
 fjournal = {Bulletin of the Belgian Mathematical Society - Simon Stevin},
 journal = {Bull. Belg. Math. Soc. - Simon Stevin},
 issn = {1370-1444},
 volume = {14},
 number = {3},
 pages = {425--444},
 year = {2007},
 language = {English},
 zbMATH = {5231820},
 Zbl = {1165.26015}
}

@misc{LamelBraun2025,
      title={Regularity of infinitesimal automorphisms of involutive structures}, 
      author={Bernhard Lamel and Nicholas Braun Rodrigues},
      year={2025},
      eprint={2506.24105},
      archivePrefix={arXiv},
      primaryClass={math.CV},
      url={https://arxiv.org/abs/2506.24105}, 
}

@article{Chinni23,
 author = {Chinni, G.},
 title = {On the partial and microlocal regularity for generalized {M{\'e}tivier} operators},
 fjournal = {Journal of Pseudo-Differential Operators and Applications},
 journal = {J. Pseudo-Differ. Oper. Appl.},
 issn = {1662-9981},
 volume = {14},
 number = {3},
 pages = {26},
 note = {Id/No 38},
 year = {2023},
 language = {English},
 doi = {10.1007/s11868-023-00534-6},
 url = {hdl.handle.net/11585/928379},
 zbMATH = {7707782},
 Zbl = {1519.35045}
}

@PhdThesis{FuerdoesThesis,
  author = {Stefan F\"urd\"os},
  school = {Faculty of Mathematics, University of Vienna},
  title  = {Ultradifferentiable CR manifolds},
  year   = {2017},
  doi    = {https://doi.org/10.25365/thesis.49161},
  url    = {https://phaidra.univie.ac.at/o:1338656},
}

@article{fuerdoes2025,
      title={Hypoellipticity of analytic differential operators in general ultradifferentiable classes}, 
      author={Stefan F{\"u}rd{\"o}s},
      journal={J. Pseudo-Differ. Oper. Appl.},
      year={2026},
      eprint={2511.13579},
      archivePrefix={arXiv},
      primaryClass={math.AP},
      url={https://arxiv.org/abs/2511.13579}, 
      note={To be published}, 
      doi={10.1007/s11868-026-00803-0}
}

@article {FuerdoesCR,
    AUTHOR = {F\"urd\"os, Stefan},
     TITLE = {Ultradifferentiable {CR} manifolds},
   JOURNAL = {J. Geom. Anal.},
  FJOURNAL = {Journal of Geometric Analysis},
    VOLUME = {30},
      YEAR = {2020},
    NUMBER = {3},
     PAGES = {3064--3098},
      ISSN = {1050-6926,1559-002X},
   MRCLASS = {32V05 (26E10 32V99 35A18)},
  MRNUMBER = {4105146},
MRREVIEWER = {Song\ Ying\ Li},
       DOI = {10.1007/s12220-019-00191-6},
       URL = {https://doi.org/10.1007/s12220-019-00191-6},
}

@article{Fuerdoes20,
 author = {F{\"u}rd{\"o}s, Stefan},
 title = {Geometric microlocal analysis in {Denjoy}-{Carleman} classes},
 fjournal = {Pacific Journal of Mathematics},
 journal = {Pac. J. Math.},
 issn = {1945-5844},
 volume = {307},
 number = {2},
 pages = {303--351},
 year = {2020},
 language = {English},
 doi = {10.2140/pjm.2020.307.303},
 zbMATH = {7264015},
 Zbl = {1451.35007}
}

@article {FLinfAut,
    AUTHOR = {F\"urd\"os, Stefan and Lamel, Bernhard},
     TITLE = {Regularity of infinitesimal {CR} automorphisms},
   JOURNAL = {Internat. J. Math.},
  FJOURNAL = {International Journal of Mathematics},
    VOLUME = {27},
      YEAR = {2016},
    NUMBER = {14},
     PAGES = {1650112, 25},
      ISSN = {0129-167X,1793-6519},
   MRCLASS = {32V05 (35A18 35N15)},
  MRNUMBER = {3593674},
MRREVIEWER = {Gerd\ Schmalz},
       DOI = {10.1142/S0129167X16501123},
       URL = {https://doi.org/10.1142/S0129167X16501123},
}

@article{FNRS20,
 author = {F{\"u}rd{\"o}s, Stefan and Nenning, David Nicolas and Rainer, Armin and Schindl, Gerhard},
 title = {Almost analytic extensions of ultradifferentiable functions with applications to microlocal analysis},
 fjournal = {Journal of Mathematical Analysis and Applications},
 journal = {J. Math. Anal. Appl.},
 issn = {0022-247X},
 volume = {481},
 number = {1},
 pages = {51},
 note = {Id/No 123451},
 year = {2020},
 language = {English},
 doi = {10.1016/j.jmaa.2019.123451},
 zbMATH = {7113638},
 Zbl = {1427.32009}
}

@article{Hoe67,
 author = {H{\"o}rmander, Lars},
 title = {Hypoelliptic second order differential equations},
 fjournal = {Acta Mathematica},
 journal = {Acta Math.},
 issn = {0001-5962},
 volume = {119},
 pages = {147--171},
 year = {1967},
 language = {English},
 doi = {10.1007/BF02392081},
 zbMATH = {3250869},
 Zbl = {0156.10701}
}

@book {HoeBook1,
    AUTHOR = {H\"ormander, Lars},
     TITLE = {The analysis of linear partial differential operators. {I}},
    SERIES = {Classics in Mathematics},
      NOTE = {Distribution theory and Fourier analysis,
              Reprint of the second (1990) edition [Springer, Berlin;
              MR1065993 (91m:35001a)]},
 PUBLISHER = {Springer-Verlag, Berlin},
      YEAR = {2003},
     PAGES = {x+440},
      ISBN = {3-540-00662-1},
   MRCLASS = {35-02},
  MRNUMBER = {1996773},
       DOI = {10.1007/978-3-642-61497-2},
       URL = {https://doi.org/10.1007/978-3-642-61497-2},
}

@article{Komatsu,
 author = {Komatsu, Hikosaburo},
 title = {Ultradistributions. {I}: {Structure} theorems and a characterization},
 fjournal = {Journal of the Faculty of Science. Section I A},
 journal = {J. Fac. Sci., Univ. Tokyo, Sect. I A},
 issn = {0040-8980},
 volume = {20},
 pages = {25--105},
 year = {1973},
 language = {English},
 zbMATH = {3408337},
 Zbl = {0258.46039}
}

@article{LiessRodino84,
 author = {Liess, Otto and Rodino, Luigi},
 title = {Inhomogeneous {Gevrey} classes and related pseudodifferential operators},
 fjournal = {Bollettino della Unione Matem{\`a}tica Italiana. Serie VI. C. Analisi Funzionale e Applicazioni},
 journal = {Boll. Unione Mat. Ital., VI. Ser., C, Anal. Funz. Appl.},
 volume = {3},
 pages = {233--323},
 year = {1984},
 language = {English},
 zbMATH = {3888250},
 Zbl = {0557.35131}
}

@article {LamelMirRegSections,
    AUTHOR = {Lamel, Bernhard and Mir, Nordine},
     TITLE = {Regularity of sections of {CR} vector bundles},
   JOURNAL = {Proc. Amer. Math. Soc. Ser. B},
  FJOURNAL = {Proceedings of the American Mathematical Society. Series B},
    VOLUME = {11},
      YEAR = {2024},
     PAGES = {15--22},
      ISSN = {2330-1511},
   MRCLASS = {32V05 (32V20)},
  MRNUMBER = {4713116},
MRREVIEWER = {Gerd\ Schmalz},
       DOI = {10.1090/bproc/149},
       URL = {https://doi.org/10.1090/bproc/149},
}

@article{Matsumoto,
 author = {Matsumoto, Waichiro},
 title = {Theory of pseudo-differential operators of ultradifferentiable class},
 fjournal = {Journal of Mathematics of Kyoto University},
 journal = {J. Math. Kyoto Univ.},
 issn = {0023-608X},
 volume = {27},
 pages = {453--500},
 year = {1987},
 language = {English},
 doi = {10.1215/kjm/1250520659},
 zbMATH = {4061820},
 Zbl = {0651.35088}
}

@article{NaganoPaper,
 author = {Nagano, T.},
 title = {Linear differential systems with singularities and an application to transitive {Lie} algebras},
 fjournal = {Journal of the Mathematical Society of Japan},
 journal = {J. Math. Soc. Japan},
 issn = {0025-5645},
 volume = {18},
 pages = {398--404},
 year = {1966},
 language = {English},
 doi = {10.2969/jmsj/01840398},
 zbMATH = {3237708},
 Zbl = {0147.23502}
}

@article{Petrowsky1939,
  author   = {Petrowsky, I. G.},
  journal  = {Rec. Math. Moscou, n. Ser.},
  title    = {Sur l'analyticite des solutions des syst{\`e}mes d'{\'e}quations diff{\'e}rentielles},
  year     = {1939},
  pages    = {3--68},
  volume   = {5},
  fjournal = {Recueil Math{\'e}matique. Nouvelle S{\'e}rie},
  language = {French},
  zbl      = {0022.22601},
  zbmath   = {3036101},
}

@article{RainerSchindl14,
 author = {Rainer, Armin and Schindl, Gerhard},
 title = {Composition in ultradifferentiable classes},
 fjournal = {Studia Mathematica},
 journal = {Stud. Math.},
 issn = {0039-3223},
 volume = {224},
 number = {2},
 pages = {97--131},
 year = {2014},
 language = {English},
 doi = {10.4064/sm224-2-1},
 zbMATH = {6373523},
 Zbl = {1318.26053}
}

@article{RainerSchindl16,
 author = {Rainer, Armin and Schindl, Gerhard},
 title = {Equivalence of stability properties for ultradifferentiable function classes},
 fjournal = {Revista de la Real Academia de Ciencias Exactas, F{\'{\i}}sicas y Naturales. Serie A: Matem{\'a}ticas. RACSAM},
 journal = {Rev. R. Acad. Cienc. Exactas F{\'{\i}}s. Nat., Ser. A Mat., RACSAM},
 issn = {1578-7303},
 volume = {110},
 number = {1},
 pages = {17--32},
 year = {2016},
 language = {English},
 doi = {10.1007/s13398-014-0216-0},
 zbMATH = {6558160},
 Zbl = {1339.26078}
}

@book {RodinoBook,
    AUTHOR = {Rodino, Luigi},
     TITLE = {Linear partial differential operators in {G}evrey spaces},
 PUBLISHER = {World Scientific Publishing Co., Inc., River Edge, NJ},
      YEAR = {1993},
     PAGES = {x+251},
      ISBN = {981-02-0845-6},
   MRCLASS = {35-02 (35B27 35D05 35H05 35S05 46Fxx)},
  MRNUMBER = {1249275},
MRREVIEWER = {Todor\ V.\ Gramchev},
       DOI = {10.1142/9789814360036},
       URL = {https://doi.org/10.1142/9789814360036},
}

@book{RudinBook,
 author = {Rudin, Walter},
 title = {Real and complex analysis.},
 edition = {3rd ed.},
 isbn = {0-07-054234-1},
 year = {1987},
 publisher = {New York, NY: McGraw-Hill},
 language = {English},
 zbMATH = {1022658},
 Zbl = {0925.00005}
}

@article{Schapira68,
 author = {Schapira, P.},
 title = {Sur les ultradistributions},
 fjournal = {Annales Scientifiques de l'{\'E}cole Normale Sup{\'e}rieure. Quatri{\`e}me S{\'e}rie},
 journal = {Ann. Sci. {\'E}c. Norm. Sup{\'e}r. (4)},
 issn = {0012-9593},
 volume = {1},
 number = {3},
 pages = {395--415},
 year = {1968},
 language = {French},
 doi = {10.24033/asens.1167},
 url = {https://eudml.org/doc/81836},
 zbMATH = {3262719},
 Zbl = {0164.15401}
}

@article{Schindl16,
 author = {Schindl, Gerhard},
 title = {Characterization of ultradifferentiable test functions defined by weight matrices in terms of their {Fourier} transform},
 fjournal = {Note di Matematica},
 journal = {Note Mat.},
 issn = {1123-2536},
 volume = {36},
 number = {2},
 pages = {1--35},
 year = {2016},
 language = {English},
 doi = {10.1285/i15900932v36n2p1},
 zbMATH = {6839676},
 Zbl = {1382.26027}
}

@article{Sussmann,
 author = {Sussmann, Hector J.},
 title = {Orbits of families of vector fields and integrability of distributions},
 fjournal = {Transactions of the American Mathematical Society},
 journal = {Trans. Am. Math. Soc.},
 issn = {0002-9947},
 volume = {180},
 pages = {171--188},
 year = {1973},
 language = {English},
 doi = {10.2307/1996660},
 zbMATH = {3431564},
 Zbl = {0274.58002}
}

@article{Treves71,
 author = {Tr{\`e}ves, Fran{\c{c}}ois},
 title = {Analytic-hypoelliptic partial differential equations of principal type},
 fjournal = {Communications on Pure and Applied Mathematics},
 journal = {Commun. Pure Appl. Math.},
 issn = {0010-3640},
 volume = {24},
 pages = {537--570},
 year = {1971},
 language = {English},
 doi = {10.1002/cpa.3160240407},
 zbMATH = {3352125},
 Zbl = {0222.35014}
}

@book {TrevesBook80a,
    AUTHOR = {Tr\`eves, Fran{\c{c}}ois},
     TITLE = {Introduction to pseudodifferential and {F}ourier integral
              operators. {V}ol. 1},
    SERIES = {The University Series in Mathematics},
      NOTE = {Pseudodifferential operators},
 PUBLISHER = {Plenum Press, New York-London},
      YEAR = {1980},
     PAGES = {xxvii+299+xi},
      ISBN = {0-306-40403-6},
   MRCLASS = {35Sxx (47G05 58G15)},
  MRNUMBER = {597144},
MRREVIEWER = {Vesselin\ M.\ Petkov},
}

@book {TrevesBook80b,
    AUTHOR = {Tr\`eves, Fran{\c{c}}ois},
     TITLE = {Introduction to pseudodifferential and {F}ourier integral
              operators. {V}ol. 2},
    SERIES = {The University Series in Mathematics},
      NOTE = {Fourier integral operators},
 PUBLISHER = {Plenum Press, New York-London},
      YEAR = {1980},
     PAGES = {xiv+301--649+xi},
      ISBN = {0-306-40404-4},
   MRCLASS = {58G15 (35Sxx 47G05)},
  MRNUMBER = {597145},
MRREVIEWER = {Vesselin\ M.\ Petkov},
}

@book {TrevesBook25,
    AUTHOR = {Treves, Fran{\c{c}}ois},
     TITLE = {Analytic partial differential equations},
    SERIES = {Grundlehren der mathematischen Wissenschaften [Fundamental
              Principles of Mathematical Sciences]},
    VOLUME = {359},
 PUBLISHER = {Springer, Cham},
      YEAR = {[2022] \copyright 2022},
     PAGES = {xiii+1228},
      ISBN = {978-3-030-94054-6; 978-3-030-94055-3},
   MRCLASS = {35-02 (35A01 35A09 35A10 46-02)},
  MRNUMBER = {4436039},
       DOI = {10.1007/978-3-030-94055-3},
       URL = {https://doi.org/10.1007/978-3-030-94055-3},
}

@article{Zachm69,
 author = {Zachmanoglou, E. C.},
 title = {An application of {Holmgren}'s theorem and convexity with respect to differential operators with flat characteristic cones},
 fjournal = {Transactions of the American Mathematical Society},
 journal = {Trans. Am. Math. Soc.},
 issn = {0002-9947},
 volume = {140},
 pages = {109--115},
 year = {1969},
 language = {English},
 doi = {10.2307/1995126},
 zbMATH = {3286498},
 Zbl = {0179.19304}
}

@article{Zachm72,
 author = {Zachmanoglou, E. C.},
 title = {Foliations and the propagation of zeroes of solutions of partial differential equations},
 fjournal = {Bulletin of the American Mathematical Society},
 journal = {Bull. Am. Math. Soc.},
 issn = {0002-9904},
 volume = {78},
 pages = {835--839},
 year = {1972},
 language = {English},
 doi = {10.1090/S0002-9904-1972-13053-2},
 zbMATH = {3399713},
 Zbl = {0253.35004}
}
\end{document}